\newtheorem{thm}{Theorem}
\newtheorem{prop}[thm]{Proposition}
\newtheorem{lem}[thm]{Lemma}
\newtheorem{hyp}{Assumption}
\newtheorem{defi}{Definition}
\newcommand{\po}{\left(}
\newcommand{\pf}{\right)}
\newcommand{\co}{\left[}
\newcommand{\cf}{\right]}
\newcommand{\cco}{\llbracket}
\newcommand{\ccf}{\rrbracket}
\newcommand{\R}{\mathbb R}
\newcommand{\Z}{\mathbb Z}
\newcommand{\T}{\mathbb T}
\newcommand{\Su}{\mathbb S^{1}}
\newcommand{\E}{\mathbb E}
\newcommand{\EE}{\mathcal E}
\newcommand{\dd}{\text{d}}
\newcommand{\na}{\nabla}
\newcommand{\TM}{T\mathcal M}
\title{Weakly self-interacting velocity jump processes for bacterial chemotaxis and adaptive algorithms.}
\author{Pierre Monmarché\footnote{Sorbonne Universit\'es, UPMC Paris 06, Laboratoire Jacques-Louis Lions UMR CNRS 7598,
 F75005 Paris, France}}
\begin{document}

\maketitle

\abstract{Self-interacting velocity jump process are introduced, which behave in  large time similarly to the corresponding self-interacting diffusions, namely the evolution of their normalized occupation measure approaches a deterministic flow.\\
\textbf{Key-words:} PDMP, self-interacting process, velocity jump process, bouncy particle.\\
\textbf{MSC-class:} 60F99, 60J75}

\section{Introduction}

Rather than by a diffusion process, the motion of a bacterium in a gradient of chemo-attractors may be modelled  (see \cite{ErbanOthmer,Calvez,FontbonaGuerinMalrieu2016} and references within) by a velocity jump process: the particle runs straight ahead at constant speed for some time, until it decides, depending on its environment, to change direction, which is done in a tumble phase which is short enough with respect to the run one to be considered instantaneous.

In the present work we add to this model  a self-interacting mechanism, namely we suppose the process  is influenced by its past trajectory. Among the many ways to add self-interaction and memory to an initially Markovian dynamic (see the survey \cite{Pemantle2007} for instance), we will consider a weak self-interaction such as introduced in \cite{BenaimLedouxRaimond} for the  diffusion
\begin{eqnarray}\label{EqDefDiffuInter}
\dd X_t &=& -\po\frac1t\int_0^t \nabla V(X_t - X_s) \dd s\pf \dd t + \sqrt 2 \dd B_t,
\end{eqnarray}
or more generally  a self-interaction that depends on the normalized occupation measure
\begin{eqnarray*}
\mu_t &=& \frac1t\int_0^t \delta_{X_s}\dd s.
\end{eqnarray*}
Note that a strong self-interaction, for which by contrast the drift is a function of the non-normalized occupation measure $t\mu_t$, such as studied in \cite{Raimond1997,Benaim2014} for diffusions, is studied in the case of a velocity jump process in \cite{MonmarcheGauthier}.

 We are interested in the long-time behaviour of the process, and in particular in the question of the influence of the weak self-interaction on this long-time behaviour: if the process tends to go back to where it has already been, is the interaction sufficient to confine it in some localized place ? In particular, if the initial landscape is symmetric, is the interaction strong enough to break the symmetry ? Beyond the modelling question, self-interaction is also used in stochastic algorithms  (see e.g. \cite{BenaimBrehier2016} and Section \ref{SectionAdaptive} for the ABP algorithm). In practice, for such algorithms, the underlying Markov process is often a kinetic one rather than an overdamped Langevin diffusion
\begin{eqnarray}\label{EqDifussionClassique}
\dd X_t &=& -\nabla V(X_t) \dd t + \sqrt 2 \dd B_t,
\end{eqnarray}
which is nevertheless used in the theoretical proofs of convergence for the algorithms. In particular, the use of velocity jump processes in stochastic algorithms have recently gained much interest (\cite{BierkensRoberts,PetersdeWith,MonmarcheRTP}). To our knowledge, the present work is the first time a convergence result is established for a weakly self-interacting kinetic process (since the release of the first version of the present work, Benaim and Brehier \cite{BenaimBrehier2017} have also studied the case of the Langevin process, a kinetic diffusion).

\bigskip

First, we recall the definition of the Markovian velocity jump process, which is in some sense a non-diffusive analoguous of the diffusion \eqref{EqDifussionClassique}.

\subsection{The Markovian velocity jump process}

Let $\mathcal M$ be a compact connected smooth Riemanian manifold of dimension $d$ with no boundary, and $\TM$ be its tangent bundle. For $r\in [0,\infty)$ and $R\in (0,\infty]$ with $r\leqslant R$, let $\EE = \{(x,y)\in\TM,\ r\leqslant |y| \leqslant R\}$. 
Let $(t,x,y)\in \R_+\times \EE\mapsto \varphi_t(x,y)\in \EE$ be the (restriction on $\EE$ of the) flow associated to the exponential map on $\TM$, defined as follows: for $x\in \mathcal M$ and $y\in T_x\mathcal M$, there exists a unique geodesic $\gamma$ on $\mathcal M$ with $\gamma(0) = x$ and $\gamma'(0) = y$. Then we set
\begin{eqnarray*}
\varphi_t(x,y) \ = \ \po \varphi_t^{(1)}(x,y),\varphi_t^{(2)}(x,y) \pf & :=& \po \gamma(t),\gamma'(t)\pf.
\end{eqnarray*}
Since $\mathcal M$ is compact, it is geodesically complete, meaning that $\varphi_t$ is defined for all $t\geqslant 0$.

For example, on the $d$-dimensional torus $\mathbb T^d$, this simply reads
\begin{eqnarray*}
\varphi_t(x,y) & =& \po x+ty,y\pf.
\end{eqnarray*}
On the $d$-dimensional sphere $\mathbb S_d$,
\begin{eqnarray*}
\varphi_t(x,y) & =& \po x \cos(|y| t) + \frac{y}{|y|}\sin\po |y| t\pf, -|y|x\sin(|y|t) + y \cos(|y|t)\pf.
\end{eqnarray*}

A velocity jump process $Z=(X,Y)\in \EE$ is a piecewise deterministic Markov process (PDMP; see \cite{MalrieuPDMP} for general considerations on PDMP) that follows the flow $\varphi$ up to random times where the velocity $Y$ jumps to a new value. The jump mechanism is defined from a jump rate $\lambda : \EE\rightarrow \R_+$, and a jump kernel $H: \EE\rightarrow \mathcal P \po \EE\pf$ (where $\mathcal P(F)$ denotes the set of probability measure of $F$). We suppose that $\lambda$ is continuous and bounded, and that $H$ is such that, if $(U,V)$ is a random variable with law $H(x,y)$, then $U=x$ almost surely (in other words, only the velocity jumps). We still denote $H$ the Markov operator such that $Hf(x,y) = \mathbb E\po f(U,V)\ |\ (U,V)\sim H(x,y)\pf$, and we write
\begin{eqnarray*}
H f(x,y) & = & \int f(x,v) h(x,y,\dd v).
\end{eqnarray*}

\bigskip

\noindent \textbf{Construction of the process.} Suppose that the process has been defined up to a time $t_0$. Set
  \begin{eqnarray*}
 t_1 & = & \inf\left\{ t>t_0,\ E<\int_{t_0}^t \lambda\po\varphi_s(Z_{t_0}\pf\dd s\right\}
 \end{eqnarray*}
to be the next jump time, where $E$ is a random variable with standard (i.e. mean 1) exponential law, independent from the past of the process. Then, set $Z_t=(X_t,Y_t) = \varphi_{t-t_0}\po Z_{t_0}\pf$ for $t\in[t_0,t_1)$, and  draw $Z_{t_1}$ according to  $H\po \varphi_{t_1-t_0}\po Z_{t_0}\pf\pf$. 
The process is thus defined up to time $t_1$, hence up to any jump time $t_n$. Since $\lambda$ is bounded, there cannot be an infinite number of jumps in a finite time interval, so that the process is defined for all time.

\bigskip

Let $(P_t)_{t\geq 0}$ be the Markov semi-group associated to $Z$, namely
\begin{eqnarray*}
P_t f(z) &:=& \mathbb E\po f(Z_t)\ |\ Z_0 = z\pf,
\end{eqnarray*}
on  functions $f\in L^\infty(\EE)$. By duality, it acts on $m\in \mathcal P\po\EE\pf$ by $(mP_t)f=m(P_t f)$, where we write $mf=\int f \dd m$. Recall that its infinitesimal generator  is defined by
\begin{eqnarray*}
L f(z) &:=& \po \partial_t \pf_{| t=0}  P_t f(z) 
\end{eqnarray*}
whenever this derivative exists.  Here, for any smooth function $f$ on $\TM$, we have 
\begin{eqnarray}\label{EqGeneMarkov}
L f(x,y)\ &=& D f(z) + \lambda\po z\pf \po   Hf(z)   - f(z)\pf,
\end{eqnarray} 
where 
\[D f(z)\ =\  \underset{t\rightarrow 0} \lim  \frac{f\po\varphi_t(z)\pf - f(z)}{t}.\]
For instance, on the torus, $Df(x,y) = y \cdot \nabla_x f(x,y)$.

\bigskip

It can be seen that the set $\mathcal C^1_b(\TM)$ of smooth and bounded functions on $\TM$ is a core for $L$. Indeed, if we suppose, in the first instance, that $\lambda \in \mathcal C^1_b(\TM)$ and that $H$ fixes $\mathcal C^1_b(\TM)$, then so does $P_t$ for $t\geqslant 0$, as can be seen with \cite[Equation (7)]{BLBMZ3} and dominated convergence arguments. From \cite[Proposition 19.9]{Kallenberg}, $\mathcal C^1_b(\TM)$ is a core for $L$. Now, in the case where $\lambda$ is only continuous, an approximation argument (see \cite[Theorem 19.25]{Kallenberg}) concludes.

\bigskip

\noindent \textbf{Remark.} It is possible to define a velocity jump process on a smooth compact Riemanian manifold with smooth boundary, by requiring that the process is reflected at the boundary (like a deterministic billard). However, in this paper, we are mainly interested in the cases of the torus and of the sphere.


 
\subsection{The self-interacting process}

Now we suppose that for each $\nu \in \mathcal P \po \mathcal M\pf$, $\lambda^\nu$ and $H^\nu$ (and $h^\nu$) are a jump rate and a jump kernel on $\EE$ that satisfy the assumptions of the previous section. We suppose that $\nu \mapsto \lambda^\nu$ and $\nu\mapsto H^\nu$  are continuous with respect to the weak topology on $\mathcal P(\mathcal M)$ and the uniform topology on $\mathcal C^0(\EE)$. We denote by $(P_t^\nu)_{t\geqslant 0}$ and $L^\nu$ the associated semi-groups and generators. When needed and without ambiguity, we will sometimes write $\lambda(z,\nu) = \lambda^\nu(z)$.


Let $Z=(X,Y)$ be a measurable process on $\EE$ (namely a measurable function from some probability space $\Omega$ to the set of c\`adl\`ag functions on $\EE$ endowed with the Skorokhod topology), $r>0$, $m_0\in\mathcal P(\EE)$ and $\mu_0\in \mathcal P(\mathcal M)$. We call 
\begin{eqnarray*}
\mu_t & := & \frac{r \mu_0 + \int_0^t \delta_{X_s} \dd s}{r+t}
\end{eqnarray*}
the (normalized) occupation measure of $X$ at time $t$ with initial weight $r$ and initial value $\mu_0$.  In other words, $\mu_t$ is the probability measure on $\mathcal M$ defined by
\[\int f \dd \mu_t = \frac{r}{r+t} \int f \dd \mu_0 + \frac{1}{r+t}\int_0^t f(X_s)\dd s.\]
 Note that only the position $X$ is concerned, and not the velocity $Y$. We denote by $\po \mathcal F_t\pf_{t\geq 0}$ the filtration associated to $(Z_t)_{t\geq 0}$.
\begin{defi}
We say $Z$ (or equivalently $(Z,\mu)$) is a self-interacting velocity jump process (SIVJP) with parameters $r$, $\mu_0$, $m_0$, 
 if the law of $Z_0$ is  $m_0$ and if for all $f\in\mathcal C^1_b(\TM)$ and all $z\in \EE$,
\begin{eqnarray*}
M_t^f & := & f(Z_t) - f(Z_0) - \int_0^t L^{\mu_s} f (Z_s)\dd s 
\end{eqnarray*}
is an $\mathcal F_t$-martingale.
\end{defi}
All or part of the parameters may be omitted when there is no ambiguity.

\bigskip

\noindent \textbf{Remark.} The martingale bracket of $M_t^f$ is classically derived from the carr\'e du champ operator $\Gamma^{\nu} f:=\frac12 L^{\nu} f^2 - f L^\nu f$ as
\begin{eqnarray*}
[M_t^f,M_t^f] &=&  2\int_0^t   \Gamma^{\mu_s} f(Z_s)  \dd s
\end{eqnarray*}
where here
\begin{eqnarray*}
\Gamma^{\nu} f(x,y) &=&    \frac12 \lambda^\nu\po x,y \pf\int \po f(x,v) - f(x,y)\pf^2 h^{\nu}(x,y,\dd v).
\end{eqnarray*}
We still denote by $\Gamma^{\nu}$ the associated symmetric bilinear form,
\begin{eqnarray*}
\Gamma^\nu(f,g)(x,y) & :=& \frac12 \po L^\nu(fg) - f L^\nu g - gL^\nu f\pf\\
&=& \frac12 \lambda^\nu\po x,y \pf\int \po f(x,v) - f(y)\pf \po g(x,v) - g(x,y)\pf h^{\nu}(x,y,\dd v).
\end{eqnarray*}

\bigskip

 For $r>0$, $\nu \in \mathcal P\po\mathcal M\pf$, $z\in \EE$ and $t\geq 0$ we write 
\begin{eqnarray*}
\Phi_{r,t}\po x,y,\nu \pf &=& \po \varphi_t(x,y) ,\  \frac{r \nu + \int_{0}^t \delta_{\varphi_s^{(1)}(x,y)}\dd s}{r+t}\pf.
\end{eqnarray*}
Note that
\begin{eqnarray*}
 \Phi_{r,t_0+t}(x,y,\nu) &=& \Phi_{r+t_0,t}\po \Phi_{r,t_0}(x,y,\nu)\pf.
\end{eqnarray*}
In other words, the initial weight $r$ can be interpreted as an initial break-in time, only after which the occupation measure is updated.

An SIVJP can be constructed as follows: from a time $t_0$ the process $(Z_t,\mu_t)$ evolves deterministically along the flow $\Phi_{r+t_0,\cdot}$ up to the next jump time $t_1$ which is defined, thanks to a standard exponential r.v. $E$,  as  
\begin{eqnarray*}
t_1 &=& \inf\left\{t>t_0,\ E < \int_{t_0}^t  \lambda \po \Phi_{r+t_0,s}\po Z_{t_0},\mu_{t_0}\pf\pf \dd s\right\}.
\end{eqnarray*}
At time $t_1$, $Z$ is drawned according to the law $H^{\mu_{t_1}}\po\varphi_{t_1-t_0}(Z_{t_0})\pf$. In other words, the whole process $(Z,\mu)$ is an inhomogeneous PDMP, whereas $Z$ alone is not a Markov process.  Given the velocity $(Y_t)_{t\geq 0}$, the position $X$ and the occupation measure $\mu$ are completely deterministic with
\begin{eqnarray*}
X_t &=& x + \int_0^t Y_s \dd s\\
\mu_t &=&  \frac{r \nu + \int_0^t \delta_{X_s} \dd s}{r+t}.
\end{eqnarray*}

\subsection{Main results}

We will work under the following assumptions:
\begin{hyp}\label{HypoUnifnu}
There exist $\lambda_{max} \geqslant \lambda_{min} >0$ such that for all $\nu\in\mathcal P\po\mathcal M\pf$ and all $z\in\EE$,
\[\lambda_{min}\ <\ \lambda^\nu(z) \ < \ \lambda_{max}.\]
There exist $c\in(0,1)$ and a probability $p$ on $(0,\infty)$ such that, 
for all $\nu\in\mathcal P\po\mathcal M\pf$ and for all positive, bounded $f$ on $\TM$ and $(x,y)\in\EE$,
\[ H^\nu f(x,y) \ \geqslant \ c \int  f(x,r\theta) p(\dd r)\dd \theta,\]
where $\dd \theta$ here stands for the uniform law on the unit ball of $T_x\mathcal M$.
\end{hyp}
This means that, whatever $x,y,\nu$, at a constant rate, the velocity is refreshed to a completely new one, isotropic.


\bigskip

In the following, we denote by $d_{TV}$ the total variation distance between probability measures,
\begin{eqnarray*}
d_{TV}\po\nu_1,\nu_2\pf &:=& \inf\left\{ \mathbb P\po V_1\neq V_2\pf,\ Law(V_i) = \nu_i,\ i=1,2\right\}.
\end{eqnarray*}

\begin{hyp}\label{HypoContiNu}
There exists $C>0$ such that 
\begin{eqnarray*}
\left\|\lambda^{\nu_1} - \lambda^{\nu_2}\right\|_\infty & \leqslant & C  d_{TV} \po \nu_1, \nu_2\pf
\end{eqnarray*}
and for all bounded $f$ on $\EE$,
\begin{eqnarray*}
\left\|H^{\nu_1}f - H^{\nu_2} f\right\|_\infty & \leqslant & C  d_{TV} \po \nu_1, \nu_2\pf \| f\|_\infty
\end{eqnarray*}
\end{hyp}
In other words, $\nu \mapsto \lambda^\nu,H^\nu$ are more than continuous: they are Lipschitz maps.

\bigskip

As will be proven in Lemma \ref{LemMesureInvariante} below, Assumption \ref{HypoUnifnu} implies that, $\nu$ being fixed, the Markov process with generator $L^\nu$ admits a unique invariant measure. We write $\Pi(\nu)$ the latter, and $\pi(\nu)$ its marginal on $\mathcal M$, namely
\begin{eqnarray*}
\pi(\nu)(x) & = & \int_{T_x\mathcal M} \Pi(\nu)\po x,\dd v \pf. 
\end{eqnarray*}
It will also be proven in Lemma \ref{LemMesureInvariante} that $\pi(\nu)$ admits a positive  density with respect to the Lebesgue measure, still denoted $\pi(\nu)$. 

\bigskip

Consider a SIVJP $(Z_t,\mu_t)_{t\geq 0}$. If   $\mu_t$ were to converge to some law $\mu_\infty$, then for large times $Z$ should more or less behave as a Markov process with generator $L^{\mu_\infty}$. But then, by ergodicity (see Section \ref{SectionMarkov} below), its empirical measure   should converge to the unique equilibrium of  $L^{\mu_\infty}$, which is $\Pi\po \mu_\infty\pf$. Therefore, a limit of $\mu_t$ should necessarily be a fixed point of $\pi$.

More precisely, let $Lim\po \mu\pf $ be the limit set of $(\mu_t)_{t\geq 0}$, namely the set of (weak) limits of convergent sequences $\po \mu_{t_k}\pf_{k\in\mathbb N}$ when  $t_k\rightarrow \infty$. Then the following holds:
\begin{thm}\label{ThmFix}
Under Assumptions \ref{HypoUnifnu} and \ref{HypoContiNu}, almost surely,  $Lim\po \mu\pf $ is a compact connected subset of
\begin{eqnarray*}
Fix\po \pi \pf &:=& \left\{ \nu\in \mathcal P\po\mathcal M\pf,\ \nu = \pi(\nu)\right\}.
\end{eqnarray*}
\end{thm}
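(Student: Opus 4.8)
The plan is to follow the now-classical ODE method for self-interacting processes (as in \cite{BenaimLedouxRaimond}), adapted to the PDMP setting. The central observation is that, after the time-change $t = e^s - r$, the normalized occupation measure $\mu_t$ satisfies an asymptotic pseudo-trajectory property with respect to the semiflow induced by the vector field $\nu \mapsto \pi(\nu) - \nu$ on $\mathcal P(\mathcal M)$. Concretely, writing $\tilde\mu_s = \mu_{e^s - r}$, a direct computation using $\dd \mu_t = \frac{1}{r+t}(\delta_{X_t} - \mu_t)\dd t$ gives
\begin{eqnarray*}
\frac{\dd}{\dd s}\tilde\mu_s &=& \delta_{X_{e^s-r}} - \tilde\mu_s \ = \ \po \pi(\tilde\mu_s) - \tilde\mu_s\pf + \po \delta_{X_{e^s-r}} - \pi(\tilde\mu_s)\pf,
\end{eqnarray*}
tested against any fixed $f\in\mathcal C^0(\mathcal M)$. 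The first bracket is the driving vector field; the second is an error term that must be shown to vanish in a suitable averaged sense.

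The key steps, in order, are the following. First, I would record from Lemma \ref{LemMesureInvariante} (invoked as given) that for each fixed $\nu$ the process with generator $L^\nu$ is uniformly ergodic: Assumption \ref{HypoUnifnu} furnishes a Doeblin-type minorization — at rate at least $c\,\lambda_{min}$ the velocity is refreshed isotropically, and on a compact manifold finitely many such refreshments together with the geodesic flow reach any open set — so $\|m P_t^\nu - \Pi(\nu)\|_{TV} \leqslant C e^{-\kappa t}$ with constants uniform in $\nu$ and in the initial law $m$. Second, using Assumption \ref{HypoContiNu} (Lipschitz dependence of $\lambda^\nu, H^\nu$ on $\nu$ in total variation), I would upgrade this to Lipschitz continuity of $\nu \mapsto \Pi(\nu)$, hence of $\nu\mapsto\pi(\nu)$, so that the vector field $F(\nu) := \pi(\nu) - \nu$ is Lipschitz and $Fix(\pi)$ is exactly its zero set. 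Third — the analytic heart — I would prove that $\tilde\mu$ is almost surely an asymptotic pseudo-trajectory of the semiflow of $F$: for each fixed $T>0$ and each $f$ in a countable convergence-determining family, $\sup_{0\leqslant h\leqslant T}\left| \int_s^{s+h} \po f(X_{e^u-r}) - \pi(\tilde\mu_u)f\pf \dd u\right| \to 0$ almost surely as $s\to\infty$. This is where one compares, on a window of length $T$, the occupation of $X$ against the equilibrium of the \emph{frozen} dynamics $L^{\tilde\mu_s}$: over such a window $\tilde\mu_u$ moves by $O(T)$, hence by Lipschitzness the frozen generator is nearly constant, and the uniform exponential ergodicity from step one lets the true empirical average track $\pi(\tilde\mu_s)f$ up to a martingale remainder controlled by the carré du champ $\Gamma^{\mu_s}$ (which is bounded, since $\lambda$ is bounded and $f$ is bounded), to which a Borel–Cantelli / law-of-large-numbers argument for martingales applies. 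Fourth, I would cite the abstract result of Benaïm on asymptotic pseudo-trajectories (e.g. via \cite{BenaimLedouxRaimond}): the limit set of an asymptotic pseudo-trajectory of a semiflow is internally chain transitive, hence in particular compact, connected, and invariant; and since $\mathcal P(\mathcal M)$ is compact (as $\mathcal M$ is compact) while $F$ is Lipschitz and points ``inward'' in the sense that $\frac{\dd}{\dd s}\tilde\mu_s$ is a difference of probability measures, $Lim(\mu)$ is nonempty. Finally, an internally chain transitive set of a semiflow that admits a strict Lyapunov function, or more simply the observation that along the flow $\frac{\dd}{\dd s}\langle f,\tilde\mu_s\rangle = \langle f,\pi(\tilde\mu_s)-\tilde\mu_s\rangle$ forces any chain-recurrent point to satisfy $\pi(\nu)=\nu$, pins $Lim(\mu)$ inside $Fix(\pi)$.

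I expect the main obstacle to be the third step: establishing the asymptotic pseudo-trajectory property with the correct uniformity. The subtlety is that the ``frozen'' comparison process with generator $L^{\tilde\mu_s}$ is only close to the true (time-inhomogeneous, $L^{\mu_u}$-driven) process because $\mu_u$ varies slowly on the time scale $s$, and one must quantify this: on a window $[s,s+T]$ the total-variation drift of $\tilde\mu$ is at most $T$, so by Assumption \ref{HypoContiNu} the generators differ by $O(CT)$ uniformly, but then $T$ must be sent to $0$ \emph{after} $s\to\infty$ in the pseudo-trajectory estimate, and the mixing time $1/\kappa$ of the frozen dynamics sets a lower bound on the window needed for ergodic averaging — so the estimate is a competition between mixing (wants $T$ large) and slow variation (wants $T$ small), resolved because the occupation measure evolves on the logarithmic scale $s$ and hence infinitely slowly relative to the $O(1)$ mixing time. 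Making the martingale fluctuation term negligible almost surely (not just in expectation) along the continuum of starting times $s$ requires the usual discretization-plus-Borel–Cantelli argument, using that $M^f$ has bracket growing at most linearly.
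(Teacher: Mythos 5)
Your overall plan reproduces the paper's structure: time-change to $\tilde\mu_s = \mu_{e^s-r}$, uniform ergodicity from the Doeblin minorization (Lemmas \ref{LemP01}--\ref{LemP02}, Proposition \ref{PropErgodique}), Lipschitz continuity of $\nu\mapsto\pi(\nu)$ (Lemma \ref{LemPK}), prove the asymptotic pseudo-trajectory property (Propositions \ref{PropEpsi} and \ref{PropPseudoTrajectoire}), then hand over to the chain-transitivity machinery of Bena\"im--Raimond. But the analytic heart—your ``step 3''—is not carried out as the paper does, and the route you sketch has two concrete defects.

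First, you invoke ``a martingale remainder controlled by the carr\'e du champ $\Gamma^{\mu_s}$ (which is bounded, since $\lambda$ is bounded and $f$ is bounded)''. But for a test function $f$ depending on position only, $\Gamma^\nu f(x,y) = \tfrac12\lambda^\nu(x,y)\int (f(x,v)-f(x,y))^2 h^\nu(x,y,\dd v) \equiv 0$, because the jump changes only the velocity. The martingale attached to $f(Z_t)$ is therefore degenerate: there is no fluctuation to control with that carr\'e du champ. The nontrivial martingale only appears after introducing the Poisson corrector $Q^{\mu_u}f$, which genuinely depends on the velocity. This is precisely what the paper's Proposition \ref{PropEpsi} does: write $K^{\mu_u}f = -L^{\mu_u}Q^{\mu_u}f$, apply It\^o's formula to $F_u(z):=u^{-1}Q^{\mu_u}f(z)$, and thereby split $\varepsilon_t(s)f$ into a boundary term $O(e^{-t})$, a drift remainder $O(e^{-t})$ (via Lemma \ref{LemQ}), and a martingale whose bracket is $\int u^{-2}\Gamma^{\mu_u}(Q^{\mu_u}f)(Z_u)\dd u \lesssim e^{-t}$. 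Note in particular that this bracket is \emph{bounded}, not ``growing at most linearly'' as you claim; the $u^{-2}$ weight is what makes the almost sure convergence come from Doob's inequality in one shot, with no discretization--Borel--Cantelli needed.

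Second, the ``frozen dynamics over a log-time window'' comparison does not quantify the way you assert. On $[s,s+T]$ with $T$ fixed, one has $d_{TV}(\tilde\mu_{s+h},\tilde\mu_s)\leqslant 1-e^{-h}\approx h$, so by Lemma \ref{LemPK} the error from replacing $\pi(\tilde\mu_u)$ by $\pi(\tilde\mu_s)$ accumulates to $\int_s^{s+T}|\pi(\tilde\mu_u)f-\pi(\tilde\mu_s)f|\dd u \sim CT^2$—a quantity that is small for small $T$ but \emph{does not tend to zero as $s\to\infty$}. This is fatal for the asymptotic pseudo-trajectory property, which requires, for each \emph{fixed} $T$, a bound vanishing as $s\to\infty$. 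Your proposed resolution (``the occupation measure evolves infinitely slowly relative to the $O(1)$ mixing time'') is correct in physical time but is spent over the full window $[e^s,e^{s+T}]$, whose physical length is $\sim e^s T$; over that span $\mu$ moves by $O(T)$, not $o(1)$. Making the freezing idea work would require a second scale (sub-windows of physical length $O(1)$, on each of which the occupation measure moves by $O(e^{-s})$), plus a coupling estimate between the true inhomogeneous process and the locally frozen one, which is substantially more delicate. The Poisson corrector sidesteps all of this: the centering $f-\pi(\mu_u)f$ is absorbed \emph{pointwise in time}, with no window and no comparison process.

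Everything else in your sketch (steps 1, 2, 4, 5) matches the paper, including the final citation to attractor-free/chain-transitive limit sets, though you should keep the paper's actual reference [BenaimRaimond, Thm.~2.4] rather than inventing an ad hoc Lyapunov argument, since the property that internally chain transitive sets of $\Psi$ lie in $Fix(\pi)$ is not a trivial consequence of the formula $\partial_s\langle f,\tilde\mu_s\rangle = \langle f,\pi(\tilde\mu_s)-\tilde\mu_s\rangle$ alone.
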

\noindent\textbf{Remarks.}
\begin{itemize}
\item In particular, if $Fix(\pi)$ is constituted of isolated points, then $\mu$ converges almost surely.
\item A law $m \in Fix(\pi)$ admits a positive density (still denoted by $m$) with respect to the Lebesgue measure  which 
 is also an equilibrium of the Mc-Kean Vlasov equation
\begin{eqnarray}\label{EqMCKean}
\partial_t m_t & = & \nabla_x\cdot \po - m_t \nabla_x \ln \pi({m_t}) + \nabla_x m_t \pf. 
\end{eqnarray}
This deterministic flow on $\mathcal P\po \mathcal M\pf$ describe the evolution of the law of a diffusion process whose drift depends on its law. This is a mean-field interaction. For more consideration about the link between mean-field and self-interaction, we refer to \cite{Benaim2015}.
\end{itemize}

\bigskip

In large times, due to the factor  $t^{-1}$, $\mu$ evolves slowly which, in view of Assumption \ref{HypoContiNu}, means that the dynamics evolves slowly. Hence, for $t$ and $T$ large enough, by ergodicity, the empirical law $\frac1T\int_t^{t+T} \delta_X$ should be more or less $\pi(\mu_t)$ so that,  on average, $\partial_t \po \mu_t\pf  \simeq t^{-1}\po \pi\po \mu_t\pf - \mu_t\pf$, or $\partial_t \po \mu_{e^t}\pf \simeq  \pi\po \mu_{e^t}\pf - \mu_{e^t} $. 
It will be proven in Lemma \ref{LemPK} below that 
\begin{eqnarray*}
F(\nu) &=& \pi(\nu)-\nu 
\end{eqnarray*}
is a Lipschitz map on $\mathcal P(\mathcal M)$ with respect to the total variation metric, so that it induces a continuous flow $\Psi$ on $\mathcal P(\mathcal M)$, solution of
\[\Psi_0(\nu) = \nu,\hspace{25pt}\partial_t \Psi_t(\nu) = F\po\Psi_t(\nu)\pf.\]
Our informal reasoning suggests that, in large times, the trajectory of $\mu$ should be a perturbation of the flow $\Psi$. In particular, a possible limit of $\mu$ is necessarily an equilibrium of $\Psi$. Nevertheless, because of randomness, when $\mu$ approaches an unstable equilibrium of $\Psi$, it seems unlikely that it stays in its basin of attraction, and the probability to converge to these equilibrium should be zero. Theorems \ref{ThmCVsink} and \ref{ThmPasCVsaddle} below are just a rigorous statement of these ideas. In order to retrieve the settings of \cite{BenaimRaimond}, we will restrict this study to interactions given by a symmetric potential interaction:

\begin{hyp}\label{HypoSymmetric}
There exists a smooth function $W:\mathcal M \times \mathcal M \rightarrow \mathbb R$, symmetric ($W(x,u)=W(u,x)$) such that, for all $\nu\in\mathcal P \po\mathcal M\pf$, the density of $\pi(\nu)$ is proportional to $\exp(-V_\nu)$ with
\begin{eqnarray}\label{EqVnuSym}
V_\nu(x) & =& \int W(x,u) \nu\po\dd u\pf.
\end{eqnarray}
\end{hyp}

As shown in \cite[Section 2.2]{BenaimRaimond} (see \cite[Proposition 2.9]{BenaimRaimond} for details and proofs of the following assertions), under Assumption \ref{HypoSymmetric}, the nature (stable or unstable) of the equilibria of $\Psi$ can be related to the free energy 
\begin{eqnarray*}
J(g) &:=& \frac12 \int W(x,u) g(x)g(u)\dd x\dd u + \int g(x) \ln g(x) \dd x,
\end{eqnarray*}
defined for $g\in \mathcal B_1^+ = \{f\in \mathcal C^0\po\mathcal M\pf,\ f>0,\ \int f = 1\}$. 
Indeed, $Fix\po\pi\pf$ is exactly the set of probability laws with a a density $g\in  \mathcal B_1^+ $ which is a critical point for $J$. For such a $g$, $\mathcal B_0 = \{f\in \mathcal C^0\po\mathcal M\pf,\ \int f = 0\}$ admits a direct sum decomposition
\begin{eqnarray*}
\mathcal B_0& = & \mathcal B_0^u(g)\oplus\mathcal B_0^c(g)\oplus\mathcal B_0^s(g)
\end{eqnarray*}
such that the Hessian $\mathcal D^2J(g)$ is definite negative (resp. null, resp. definite positive) on $\mathcal B_0^u(g)$ (resp. $\mathcal B_0^c(g)$, resp. $\mathcal B_0^s(g)$). The dimensions of $\mathcal B_0^u(g)$ and $\mathcal B_0^c(g)$ are finite. We say that $\nu   \in Fix\po\pi\pf$ is a non-degenerated fixed point of $\pi$ if its density $g$ is such that  $\mathcal B_0^c(g)=\{0\}$, and in that case we say it is a sink (resp. a saddle) of $\Psi$ if $\mathcal B_0^u(g)=\{0\}$ (resp. $\neq \{0\}$).

\begin{thm}\label{ThmCVsink}
Under Assumptions \ref{HypoUnifnu}, \ref{HypoContiNu} and \ref{HypoSymmetric}, let $\nu$ be a sink of $\Psi$. Then
\begin{eqnarray*}
\mathbb P\po \mu_s \underset{s\rightarrow\infty}\longrightarrow \nu \pf & > & 0.
\end{eqnarray*}
\end{thm}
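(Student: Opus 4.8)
The plan is to follow the now-classical stochastic approximation strategy of Bena\"im and Raimond, transferring it from the self-interacting diffusion setting to our PDMP setting. The starting point is Theorem \ref{ThmFix} together with the informal reasoning preceding it: after the time-change $s = e^t$, the occupation measure $\mu_{e^t}$ is an \emph{asymptotic pseudo-trajectory} of the deterministic flow $\Psi$ driven by $F(\nu) = \pi(\nu) - \nu$. To make this rigorous one writes, for $f \in \mathcal C^1_b$,
\begin{eqnarray*}
\mu_{t+h} f - \mu_t f & = & \frac{1}{r+t}\int_t^{t+h}\po f(X_s) - \mu_s f\pf\dd s + o\po\tfrac{h}{t}\pf,
\end{eqnarray*}
and the core estimate is that $\frac{1}{r+t}\int_t^{t+h}\po f(X_s) - \pi(\mu_s)f\pf\dd s$ is small: over a window on which $\mu$ has barely moved (by Assumption \ref{HypoUnifnu}, the jump rate is bounded so $\mu$ moves at speed $O(1/t)$), the process $Z$ is close to a Markov process with frozen generator $L^{\mu_t}$, which by the ergodicity established in Section \ref{SectionMarkov} equilibrates to $\Pi(\mu_t)$ at a rate uniform in the parameter $\nu$ (this uniformity is exactly what Assumption \ref{HypoUnifnu} buys, via Lemma \ref{LemMesureInvariante} and a Doeblin-type argument). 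The discrepancy between the frozen dynamics and the true dynamics is controlled by Assumption \ref{HypoContiNu}: the generators $L^{\nu}$ depend Lipschitz-continuously on $\nu$ in total variation, and over a window $[t,t+h]$ one has $d_{TV}(\mu_s,\mu_t) = O(h/t)$. Summing these estimates with a martingale control on $M^f$ (whose bracket, by the Remark, is $O(1/t)$ per unit time, hence the fluctuations vanish) yields the pseudo-trajectory property.

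Once $\mu_{e^t}$ is an asymptotic pseudo-trajectory of $\Psi$, I invoke the general convergence results for such pseudo-trajectories near an attractor. The key structural input is Assumption \ref{HypoSymmetric}: it makes $\Psi$ a (pseudo-)gradient flow for the free energy $J$, so that $J$ is a strict Lyapunov function and its critical points coincide with $Fix(\pi)$. A \emph{sink} $\nu$ of $\Psi$, being non-degenerate with $\mathcal B_0^u(g) = \{0\}$, is a linearly stable equilibrium: the Hessian $\mathcal D^2 J(g)$ is positive definite on all of $\mathcal B_0$, so $\nu$ is a strict local minimum of $J$ and an asymptotically stable equilibrium of $\Psi$ with a neighborhood basin on which $J(\Psi_t(\cdot)) \to J(\nu)$ exponentially. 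The standard "shadowing" lemma for asymptotic pseudo-trajectories then gives that, on the event that $\mu_{e^t}$ enters and remains in a small neighborhood of $\nu$, it converges to $\nu$. So the whole claim reduces to showing
\begin{eqnarray*}
\mathbb P\po \mu_{e^t} \text{ enters and stays in a neighborhood of } \nu \pf & > & 0.
\end{eqnarray*}

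For the positivity of this probability I would use a non-degeneracy / controllability argument: starting from any configuration, with positive probability the process can, in finite time, drive $\mu$ into an arbitrarily small neighborhood of $\nu$ and then remain there. The "remaining there" part is handled by the stability of the sink together with the above pseudo-trajectory estimate: once in the basin and past a large enough time $t_0$, the deterministic drift pushes $\mu$ towards $\nu$ and the stochastic perturbations (martingale increments plus frozen-vs-true discrepancy, both summably small after the $\log$ time-change) are, with positive probability, never large enough to escape — this is a Borel--Cantelli / Gronwall estimate on the event that a sequence of small bad events does not occur, each of which has probability close to $1$ for $t_0$ large. The "entering" part uses the accessibility built into Assumption \ref{HypoUnifnu}: the isotropic velocity refreshment at rate $\geq c\lambda_{min}$ means the position process can reach, with positive probability, any prescribed behavior over a finite time window, so the occupation measure can be steered near $\nu$; alternatively, and more cleanly, one picks a point $\nu'$ in the basin of $\nu$ with a density, notes that $\{\mu_{t_0} \in \text{basin}\}$ has positive probability for some finite $t_0$ by a support argument on the PDMP trajectories, and conditions on it.

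The main obstacle I expect is the uniform-in-$\nu$ ergodic estimate feeding the pseudo-trajectory property: one needs a \emph{quantitative} rate of convergence to equilibrium for the family $(L^\nu)_\nu$ that is uniform over $\nu \in \mathcal P(\mathcal M)$, for a hypoelliptic-type (non-elliptic, velocity-jump) dynamics. Assumption \ref{HypoUnifnu} is tailored for this — the isotropic refreshment gives a minorization condition allowing a Doeblin or Harris argument — but turning "at a constant rate the velocity is fully refreshed" into an explicit mixing time requires showing that after refreshment the position decorrelates, which on a general compact manifold needs the geodesic flow plus the refreshment to be jointly controllable; this is presumably the content of Section \ref{SectionMarkov} and Lemma \ref{LemMesureInvariante}, which I am assuming. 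The second delicate point is bookkeeping the two time-scales (the $O(1/t)$ drift of $\mu$ versus the $O(1)$ mixing time of $Z$) so that the windows $[t, t+h(t)]$ can be chosen with $h(t) \to \infty$ but $h(t)/t \to 0$, which is routine but must be done carefully to get the $o(h/t)$ error terms genuinely summable after the logarithmic time-change.
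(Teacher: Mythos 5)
Your plan is essentially the paper's: the time-changed occupation measure $\zeta_t=\mu_{e^t}$ is proved to be an asymptotic pseudo-trajectory of $\Psi$ (Propositions \ref{PropEpsi} and \ref{PropPseudoTrajectoire}), a quantitative trapping estimate near a sink is derived from this (Lemma \ref{LemEpsiV} and Proposition \ref{PropCVsink}, imported from \cite{BenaimRaimond} and \cite[Theorem~7.3]{Benaim99}), and the positive probability of reaching the trap is supplied by a controllability argument (Proposition \ref{PropControle}). Two remarks on technique: the pseudo-trajectory estimate is not obtained by the frozen-window coupling heuristic you sketch but by solving the Poisson equation $L^\nu Q^\nu f=K^\nu f$ (Lemmas \ref{LemCVequilibre}--\ref{LemQ}) and applying It\^o's formula to $F_t=\frac1t Q^{\mu_t}f$, so the relevant martingale bracket scales like $e^{-t}$ after the logarithmic change of time; and the controllability step is where the genuinely new, model-specific work lies --- one must explicitly build, for any target $\nu$, piecewise-geodesic constant-speed trajectories whose time-average of $W(x,\cdot)$ approximates $V_\nu$, by concatenating small loops at a finite set of points visited for carefully tuned durations, and then invoke the support of the PDMP; your sentence ``the occupation measure can be steered near $\nu$'' compresses the hardest part of the proof.
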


To treat the case of unstable equilibria, we will add an assumption on the interaction potential $W$. We say that a symmetric, continuous function $K: \mathcal M \times\mathcal M \rightarrow \R$ is a Mercer kernel if, for all $f\in L^2\po \mathcal M,\dd x\pf$,
\begin{eqnarray*}
\int K(x,u) f(x) f(u) \dd x\dd u &\geqslant & 0.
\end{eqnarray*}
We refer to \cite[Section 2.3]{BenaimRaimond} for many examples of such kernels, among which we only recall the following: if $C$ is a metric space endowed with a probability measure $\nu$, and $G:\mathcal M\times C \rightarrow \R$ is a continuous function, then
\begin{eqnarray}\label{EqKMercer}
K(x,r) &=& \int_C G(x,u) G(r,u)\nu(\dd u)
\end{eqnarray}
is a Mercer kernel.

\begin{hyp}\label{HypoMercer}
 Assumption \ref{HypoSymmetric} holds with $W = W_+ - W_-$, where both $W_+$ and $W_-$ are Mercer kernels.
\end{hyp}

\begin{thm}\label{ThmPasCVsaddle}
Under Assumptions \ref{HypoUnifnu}, \ref{HypoContiNu}, \ref{HypoSymmetric} and \ref{HypoMercer}, let $\nu$ be a saddle of $\Psi$. Then
\begin{eqnarray*}
\mathbb P\po \mu_s \underset{s\rightarrow\infty}\longrightarrow \nu \pf & = & 0.
\end{eqnarray*}
\end{thm}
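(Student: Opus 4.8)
The plan is to reduce Theorem~\ref{ThmPasCVsaddle} to a non-convergence result for stochastic approximation algorithms with a slowly-varying ODE limit, exactly as in the self-interacting diffusion setting of Benaim--Ledoux--Raimond and Benaim--Raimond. First I would set up the time-change: writing $\mu_t$ in the logarithmic time scale, $\tilde\mu_s := \mu_{e^s}$, the informal computation in the introduction suggests that $\tilde\mu$ is an asymptotic pseudo-trajectory of the flow $\Psi$ generated by $F(\nu)=\pi(\nu)-\nu$. This requires showing that the "noise" accumulated between $t$ and $t+\delta t$ is negligible on the logarithmic scale, which is where the ergodicity of the frozen Markov process with generator $L^{\mu_t}$ (Lemma~\ref{LemMesureInvariante}, via the Doeblin-type minoration in Assumption~\ref{HypoUnifnu}) enters: it gives exponential mixing with a rate uniform in $\nu$, so that $\frac1T\int_t^{t+T}\delta_{X_s}\,\dd s$ is close to $\pi(\mu_t)$ with quantitative error, and the martingale terms $M_t^f$ have quadratic variation controlled by $\Gamma^{\mu_s}$ hence by $\lambda_{\max}$. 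The Lipschitz continuity of $\nu\mapsto L^\nu$ and hence of $F$ (Lemma~\ref{LemPK}, Assumption~\ref{HypoContiNu}) ensures the frozen dynamics does not drift away while we average. This machinery has presumably already been developed to prove Theorems~\ref{ThmFix} and~\ref{ThmCVsink}, so I would quote it.

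Second, I would invoke the abstract non-convergence theorem for asymptotic pseudo-trajectories near a linearly unstable equilibrium — the analogue of Benaim's result (and Pemantle's) stating that a stochastic approximation process with nonvanishing noise in the unstable direction converges to a hyperbolic saddle with probability zero. Concretely, at the saddle $\nu$ with density $g$, the linearization of $F$ (equivalently, of $\Psi$) at $g$ is the operator whose quadratic form is essentially $\mathcal D^2 J(g)$ up to a positive-definite change of metric, so $\mathcal B_0^u(g)\neq\{0\}$ means the linearized flow has a genuinely expanding direction, and non-degeneracy ($\mathcal B_0^c(g)=\{0\}$) makes the equilibrium hyperbolic. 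The key analytic input needed to apply the abstract theorem is a uniform lower bound on the noise projected onto the unstable subspace: one must show that infinitely often the fluctuation of $\mu$ has a component of order at least $\varepsilon/\sqrt{t}$ (in log-time, order $\varepsilon$) along some fixed vector in $\mathcal B_0^u(g)$. Here is exactly where Assumption~\ref{HypoMercer} is used — decomposing $W=W_+-W_-$ into Mercer kernels gives, via the representation \eqref{EqKMercer}, a Hilbertian structure in which the unstable subspace and the noise can be compared, and it guarantees that the process genuinely explores directions in which $J$ decreases, so the noise cannot be degenerate precisely along $\mathcal B_0^u(g)$. Without the Mercer structure one cannot rule out a pathological cancellation of the fluctuations in the unstable direction.

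The main obstacle I anticipate is precisely this noise-nondegeneracy estimate along the unstable manifold. For diffusions, Brownian increments supply isotropic noise for free; here the noise comes from the velocity-jump mechanism filtered through the $t^{-1}$-averaging of $\delta_{X_s}$, and it is a priori concentrated on a low-dimensional set of "directions in $\mathcal P(\mathcal M)$" (namely those reachable by point masses $\delta_x$). One has to argue that, because Assumption~\ref{HypoUnifnu} forces the velocity to be refreshed isotropically at positive rate, the position process visits a genuinely $d$-dimensional region of $\mathcal M$ on every time scale, so that the occupation measure accumulates variance in all of $\mathcal C^0(\mathcal M)$, and in particular — using the Mercer decomposition to identify the unstable subspace with a subspace on which the relevant bilinear form is coercive — in the finite-dimensional space $\mathcal B_0^u(g)$. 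I would make this quantitative by a small-ball estimate: conditionally on $\mathcal F_t$, with probability bounded below the increment $\mu_{t+h}-\mu_t$ differs from its conditional mean by at least $c\,h/(t)$ in a fixed direction of $\mathcal B_0^u(g)$, uniformly in the position of $\mu_t$ in a neighborhood of $g$. Once this is in hand, the remaining steps — controlling the discretization error between $\tilde\mu$ and $\Psi$, applying a Borel--Cantelli / escape-from-neighborhood argument in the linearized unstable coordinates, and checking the hyperbolicity hypotheses from the spectral description of $\mathcal D^2 J(g)$ — are routine adaptations of \cite{BenaimRaimond} and I would only sketch them.
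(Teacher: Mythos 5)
Your high-level picture is right: time-change to logarithmic scale, use the Mercer decomposition $W=W_+-W_-$ to identify the unstable directions inside a Hilbert space $\mathcal H$, and argue that noise cannot be degenerate on $\mathcal B_0^u(g)$. But two of your specific moves do not match the paper and one of them is a real weakness. First, there is no \emph{abstract} non-convergence theorem for asymptotic pseudo-trajectories in this infinite-dimensional, measure-valued setting that one can simply "invoke"; the paper has to reconstruct the Benaim--Raimond argument essentially from scratch, building the $\mathcal C^2$ Lyapunov function $\eta$ (roughly, the square of a distance to the stable manifold $Stab(h^*)$ inside $\mathcal H$) and then working entirely with $\eta(g_t)$ through a stopping-time / drift-escape analysis (Lemmas~\ref{Lemetag}--\ref{LemH}). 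Second, and more importantly, the noise-nondegeneracy in the paper is \emph{not} a small-ball probability estimate on increments of $\mu$, which you correctly flag as difficult for a PDMP that moves deterministically between jumps. Instead, the paper uses a second-moment/drift inequality: it shows that $\mathbb E\big[\eta(g_{S_t\wedge U_t}) \mid \mathcal F_t\big]-\eta(g_t)$ has a strictly positive contribution of order $(r+t)^{-1}$ coming from the integrated carr\'e du champ $\Gamma^{\mu^*}$ applied to $Q^{\mu^*}e_i$ and contracted against $\mathcal D^2\eta(g^*)$; positivity of the resulting constant $\mu^*\Phi$ is where Mercer enters (if $\mu^*\Phi=0$ then $V_m\in\mathcal H^s$ for all $m$, forcing $\mathcal H^u=\{0\}$, a contradiction). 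This avoids any hypoellipticity or small-ball argument entirely. Third, a technical device you do not mention but which the paper relies on throughout is working with the Poisson-corrected occupation measure $\nu_t = \mu_t + \frac{1}{r+t}Q^{\mu_t}\cdot(Z_t)$ (so $g_t=V_{\nu_t}$ is a genuine $\mathcal H$-valued semimartingale with a clean decomposition); without this correction the averaging error from the frozen Markov dynamics does not cancel at the right order. If you tried to carry out your plan as written, the small-ball step is where you would most likely get stuck, and you would want to replace it with the carr\'e du champ second-moment argument.
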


These three results are not surprising, since they are exactly similar to those of Bena\"im, Raimond and Ledoux on the self-interacting diffusion \eqref{EqDefDiffuInter}. Moreover, the structure of the proofs are very similar. The differences (and the difficulties specific to our study) are, in a sense, mostly technical, and come from the fact that the process under scrutiny, instead of being an elliptic reversible diffusion with nice regularization properties, is a kinetic piecewise deterministic Markov process, with an hybrid dynamic combining continuous time, continuous space, continuous moves and discrete jumps.

Still, the proofs of these three theorem follow so closely the works \cite{BenaimRaimond,BenaimLedouxRaimond} that, instead of recopying here large segments of the latters for completeness, we made the choice to refer to them as much as possible when the arguments can be straightforwardly adapted to our case, as long as it does not alter much the clarity of the whole presentation. That way, the present paper focuses on what is really different for the SIVJP, which drastically simplifies the presentation, as many definitions and notations are no more needed. To ease the switching from one work to the other, we tried to keep the same notations.

\subsection{Phase transitions for a toy model}

Once the above theoretical results are established, in a second part we turn to the study of a particular one dimensional case, which is the (circular integrated) telegraph process (\cite{FontbonaGuerinMalrieu2016,Volte-Face,MonmarcheRTP}). Denoting $dist_{\mathbb T}(x,z) = |e^{i x} - e^{iz}|$ for $x,z$ in the one dimensional torus $\mathbb T = \R/2\pi\Z$, we consider a quadratic interaction potential
\[\tilde W (x,z) \ =\ \rho \po \frac12 dist_{\mathbb T}^2(x,z) - 1\pf = -\rho \cos(x-z)\]
for some $\rho\in\R$,
\[W(x,z) \ = \ U(x) + \tilde W(x,y) + U(z)\]
for some smooth potential $U$ on $\mathbb T$ and, for $\nu \in \mathcal P\po\mathbb T\pf$,
\[V_\nu(x) \ = \ \int W(x,u) \nu(\dd u).\]
The self-interacting telegraph process (SITP) with exterior potential $U$, quadratic interaction and parameters $\rho\in\R$ and $\lambda_{min}>0$ is then the SIVJP on $\EE = \mathbb T \times \{-1,1\}$ with generator
\begin{eqnarray*}
L^\nu f(x,y) & =& y\partial_x f(x,y) + \po \lambda_{min} + \po y\partial_x V_\nu(x)\pf_+ \pf \po f(x,-y) - f(x,y)\pf,
\end{eqnarray*}
where $(t)_+ = \max(0,t)$ denotes the positive part. As established in \cite[Section 1.2]{MonmarcheRTP}, the invariant measure of $L^\nu$ is proportional to $\exp(-V_\nu)\otimes \po \delta_1 + \delta_{-1}\pf$, so that $\pi(\nu)$ is proportional to $\exp(-V_\nu)$. Note that, in view of \eqref{EqKMercer}, and regardless of the sign of $\rho$, the additional Assumption~\ref{HypoMercer} of Theorem~\ref{ThmPasCVsaddle} is always satisfied.  We will establish the following:

\begin{thm}\label{ThmWQuadra}
Let $(Z,\mu)$ be a SITP with  exterior potential $U$, quadratic interaction and parameters $\rho\in\R$ and $\lambda_{min}>0$. For $(a,b)$ in the unitary disk, define $\overline{\pi}_\rho(a,b)\in\mathcal P\po \mathbb T\pf$   by
\begin{eqnarray*}
\overline{\pi}_\rho(a,b)(\dd z) & = & \frac{e^{-U(z) + \rho\po a\cos(z)+b\sin(z)\pf}}{\int e^{-U(x) + \rho\po a\cos(x)+b\sin(x)\pf} \dd x}\dd z.
\end{eqnarray*}
\begin{enumerate}
\item If $U=0$, then
\begin{enumerate}[label=(\roman*)]
\item If $\rho \leqslant  2$ then $\mu_t$ almost surely converges to the Lebesgue measure on $\mathbb T$.
\item If $\rho > 2$ then there exists a deterministic $r(\rho)>0$ and a random variable $\Theta\in\mathbb T$ such that $\mu_t$ almost surely converges to $\overline{\pi}_\rho\po r\cos\Theta,r\sin\Theta\pf$.
\end{enumerate}
\item If $U(z) = -\cos(2z)$, let $\rho_c:= \po \int \cos^2 \dd \overline{\pi}_\rho(0,0)\pf^{-1}$.
\begin{enumerate}[label=(\roman*)]
\item If $\rho \leqslant  \rho_c$, then $\mu_t$ almost surely converges to $\overline{\pi}_\rho(0,0)$.
\item If $\rho > \rho_c$, then there exists a deterministic $a_*(\rho)>0$ and a random variable $\kappa\in\{-1,1\}$ (with positive probability to be 1 and to be -1) such that $\mu_t$ almost surely converges to $\overline{\pi}_\rho(\kappa a_*,0)$.
\end{enumerate}
\item If $U$ admits a non-degenerated local minimum at a point $x_0\in\mathbb T$, then for all $\delta>0$, there exist $\rho_0>0$ such that $\rho>\rho_0$ implies
\begin{eqnarray*}
\mathbb P\po \underset{t\rightarrow\infty}\limsup \po \int dist_{\mathbb T}^2(z,x_0) \mu_t\po \dd z\pf \pf<\delta \pf & > & 0.
\end{eqnarray*}
\end{enumerate}
\end{thm}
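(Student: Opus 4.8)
The plan is to reduce everything to a two-dimensional problem. The first point to establish is that $\pi(\nu)$ depends on $\nu$ only through the two numbers $a(\nu)=\int_{\mathbb T}\cos(u)\,\nu(\mathrm du)$ and $b(\nu)=\int_{\mathbb T}\sin(u)\,\nu(\mathrm du)$: indeed $V_\nu(x)=U(x)+\int U\,\mathrm d\nu-\rho\big(a(\nu)\cos x+b(\nu)\sin x\big)$, so $\pi(\nu)=\overline{\pi}_\rho(a(\nu),b(\nu))$, and $(a(\nu),b(\nu))$ ranges over the closed unit disk $\overline{\mathbb D}$. Consequently $Fix(\pi)$ is the image under $(a,b)\mapsto\overline{\pi}_\rho(a,b)$ of the set of fixed points of the planar map $T_\rho(a,b)=\big(\int_{\mathbb T}\cos(z)\,\overline{\pi}_\rho(a,b)(\mathrm dz),\int_{\mathbb T}\sin(z)\,\overline{\pi}_\rho(a,b)(\mathrm dz)\big)$; and since $\nu\mapsto(a(\nu),b(\nu))$ is linear and continuous, it follows from the proof of Theorem~\ref{ThmFix} that $(a(\mu_t),b(\mu_t))$ is an asymptotic pseudo-trajectory of the planar ODE $\dot w=T_\rho(w)-w$. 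Writing $Z(a,b)=\int_{\mathbb T}e^{-U(z)+\rho(a\cos z+b\sin z)}\,\mathrm dz$ one has $T_\rho=\rho^{-1}\nabla\ln Z$, so this ODE is (up to the scalar $\rho$) the gradient flow of $\Lambda:=\ln Z-\tfrac{\rho}{2}(a^2+b^2)$, with Jacobian $DT_\rho(a,b)-I=\rho\,\Sigma(a,b)-I$, where $\Sigma(a,b)$ is the positive semidefinite $2\times2$ covariance matrix of the vector $(\cos z,\sin z)$ under $\overline{\pi}_\rho(a,b)$. Finally I would check, using that $\mathcal D^2J$ at the density of $g=\overline{\pi}_\rho(a,b)$ equals a positive-definite quadratic form perturbed by the rank-two term $-\rho(\langle\cos,\cdot\rangle^2+\langle\sin,\cdot\rangle^2)$, that the Bena\"im--Raimond classification of $g\in Fix(\pi)$ matches the classification of $(a,b)$ as an equilibrium of the ODE: $g$ is non-degenerate iff $DT_\rho(a,b)-I$ is invertible, a sink iff it is negative definite, a saddle iff it is invertible with a positive eigenvalue. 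Together with Theorem~\ref{ThmFix}, which makes $Lim(\mu)$ a compact connected subset of $Fix(\pi)$, this reduces the whole statement to an explicit planar study.

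\emph{Cases 1 and 2.}
For $U=0$, $Z(a,b)=2\pi I_0(\rho s)$ with $s=\sqrt{a^2+b^2}$, so $\Lambda=f(s):=\ln(2\pi I_0(\rho s))-\tfrac{\rho}{2}s^2$ is radial with $f'(s)=\rho\big(\psi_\rho(s)-s\big)$, $\psi_\rho(s):=I_1(\rho s)/I_0(\rho s)$. Using the classical strict concavity and monotonicity of $\psi_\rho$: if $\rho\leqslant2$ the only fixed point of $T_\rho$ is $0$, so $Fix(\pi)=\{\mathrm{Leb}\}$ is a single point and $\mu_t\to\mathrm{Leb}$ a.s.\ by Theorem~\ref{ThmFix}; if $\rho>2$ there is in addition a whole circle $\{s=r(\rho)\}$ of fixed points, $0$ is then a strict local minimum of $f$, hence a source (so $\mathbb P(\mu_t\to\mathrm{Leb})=0$ by Theorem~\ref{ThmPasCVsaddle}, whose Assumption~\ref{HypoMercer} holds for every $\rho$, see the remark preceding the statement), and the circle is normally attracting. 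For $U(z)=-\cos2z$ I would exploit the symmetries $z\mapsto-z$ and $z\mapsto z+\pi$ (which act on $(a,b)$ by sign changes of the coordinates) to reduce the equation $T_\rho(a,b)=(a,b)$ to scalar equations, and compute that $DT_\rho(0,0)=\rho\,\Sigma(0,0)$ has eigenvalue $\rho/\rho_c$ along the $\cos$-direction --- because $\rho_c^{-1}=\mathrm{Var}_{\overline{\pi}_\rho(0,0)}(\cos z)$ is the largest eigenvalue of $\Sigma(0,0)$ (here $\overline{\pi}_\rho(0,0)\propto e^{\cos2z}$ does not depend on $\rho$) --- and a strictly smaller one along the $\sin$-direction, so that $\overline{\pi}_\rho(0,0)$ is a sink for $\rho<\rho_c$ and a non-degenerate saddle for $\rho>\rho_c$. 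A further, somewhat computational, analysis of $T_\rho$ on $\overline{\mathbb D}$ then shows $Fix(\pi)=\{\overline{\pi}_\rho(0,0)\}$ for $\rho\leqslant\rho_c$, while for $\rho>\rho_c$ the set $Fix(\pi)$ is finite with exactly two sinks $\overline{\pi}_\rho(\pm a_*(\rho),0)$, all other fixed points being saddles. Feeding this into Theorems~\ref{ThmFix}, \ref{ThmCVsink} and \ref{ThmPasCVsaddle}: for $\rho\leqslant\rho_c$, $\mu_t\to\overline{\pi}_\rho(0,0)$ a.s.; for $\rho>\rho_c$, $Lim(\mu)$ --- connected, contained in $Fix(\pi)$, and almost surely disjoint from $\overline{\pi}_\rho(0,0)$ and from every saddle --- is a.s.\ a single one of the two sinks, each of which is reached with positive probability, whence the random sign $\kappa$.

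\emph{The non-discrete case 1(ii).}
Here $Fix(\pi)$ is not discrete, so Theorems~\ref{ThmCVsink}--\ref{ThmPasCVsaddle} do not conclude directly: the above only yields that, almost surely, $Lim(\mu)$ is a non-empty connected subset of the circle $\mathcal N:=\{\overline{\pi}_\rho(r(\rho)\cos\phi,r(\rho)\sin\phi):\phi\in\mathbb T\}$ (Theorem~\ref{ThmPasCVsaddle} being used once more to discard $\mathrm{Leb}$), and I must upgrade this to convergence of $\mu_t$. Since $\mathcal N$ is normally hyperbolic and attracting for the reduced flow (as $f''(r(\rho))=\rho(\psi_\rho'(r(\rho))-1)<0$ by concavity of $\psi_\rho$), I would write $(a(\mu_t),b(\mu_t))=s_t(\cos\phi_t,\sin\phi_t)$ and argue that $s_t\to r(\rho)$, while the angular component $\phi_t$ has no deterministic drift (the reduced ODE is purely radial, by rotational invariance) and fluctuations that, measured in the logarithmic time scale and using the exponential ergodicity of the frozen telegraph process together with the $O(1/t)$ slowness of $\mu_t$, are summable; hence $\phi_t$ converges a.s.\ to a random $\Theta\in\mathbb T$, and $\mu_t\to\overline{\pi}_\rho(r(\rho)\cos\Theta,r(\rho)\sin\Theta)$ a.s.

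\emph{Case 3, and the main difficulty.}
Given $\delta>0$, a Laplace-type expansion shows that, for $\rho$ large, $T_\rho$ maps a small closed neighbourhood of $(\cos x_0,\sin x_0)$ into itself: $\overline{\pi}_\rho(s\cos\theta,s\sin\theta)$ concentrates, for $s\approx1$, $\theta\approx x_0$ and $\rho$ large, on an $O(\rho^{-1/2})$-neighbourhood of a point that is $O((\rho U''(x_0))^{-1})$-close to $x_0$, with the angular direction contracted at rate $U''(x_0)/\rho$; hence by Brouwer there is a fixed point $(a_*,b_*)$ there, with $(a_*,b_*)\to(\cos x_0,\sin x_0)$ as $\rho\to\infty$. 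The measure $\nu_\rho:=\overline{\pi}_\rho(a_*,b_*)\in Fix(\pi)$ satisfies $\int dist_{\mathbb T}^2(z,x_0)\,\nu_\rho(\mathrm dz)=O\big((U''(x_0)+\rho)^{-1}\big)<\delta$ for $\rho$ large, and the same expansion shows $\Sigma(a_*,b_*)$ has eigenvalues $\sim(U''(x_0)+\rho)^{-1}$ and $O(\rho^{-2})$, both $<1/\rho$, so $DT_\rho(a_*,b_*)-I$ is negative definite and $\nu_\rho$ is a sink. By Theorem~\ref{ThmCVsink}, $\mathbb P(\mu_t\to\nu_\rho)>0$, and on that event $\limsup_{t\to\infty}\int dist_{\mathbb T}^2(z,x_0)\,\mu_t(\mathrm dz)=\int dist_{\mathbb T}^2(z,x_0)\,\nu_\rho(\mathrm dz)<\delta$, which is the claim. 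The main obstacle is the non-discrete case 1(ii): the rest of the proof is essentially feeding an explicit planar picture into the general Theorems~\ref{ThmFix}--\ref{ThmPasCVsaddle}, but the continuum of equilibria forced by the rotational symmetry lies outside their scope, and turning ``$Lim(\mu)\subseteq\mathcal N$'' into genuine convergence of $\mu_t$ requires the separate argument combining normal hyperbolicity with the summability of the angular fluctuations; a secondary technical point is pinning down the global structure of $Fix(\pi)$ (and excluding degenerate fixed points) in Case 2.
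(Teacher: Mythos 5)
The overall strategy is correct and matches the paper's: reduce to the planar map $T_\rho(a,b)$ via the pair $(\int\cos\,\mathrm d\nu,\int\sin\,\mathrm d\nu)$, identify equilibria, and feed the picture into Theorems~\ref{ThmFix}--\ref{ThmPasCVsaddle}. For Case~1 the paper, like you, refers to \cite[Theorem 4.5]{BenaimLedouxRaimond}, and your sketch of the radial Bessel-function reduction and of the circle of equilibria is the same argument; your recognition that 1(ii) is the genuinely delicate point (a continuum of degenerate fixed points, so neither \ref{ThmCVsink} nor \ref{ThmPasCVsaddle} applies directly, and one needs a normal-hyperbolicity plus angular-fluctuation estimate) is correct. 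For Case~2 the linearisation $J_{\overline F}(0,0)=\rho M_U-I$ and the sink/saddle dichotomy at $\rho_c$ coincide with Proposition~\ref{PropStabGibbs}. You do, however, understate the ``somewhat computational'' part: the step that $Fix(\pi)\subset A_0\cup A_0'$ (no off-axis fixed points) is a separate lemma in the paper, due to Bardet--Bena\"im--Malrieu--Zitt and based on a clever sequence of changes of variables turning the fixed-point equation into a manifestly signed integral; it is not a consequence of the axis-by-axis analysis of Proposition~\ref{PropDeuxpuits}. Likewise the proof that $\overline\pi_\rho(0,\pm b_*)$ are saddles is its own lemma, with a rather involved monotonicity/Laplace argument on $\rho\mapsto\rho A(\rho)$.

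For Case~3 your route is genuinely different from the paper's, and this is the most interesting divergence. You propose to produce, by a uniform Laplace expansion and Brouwer, an explicit nearby fixed point $\nu_\rho=\overline\pi_\rho(a_*,b_*)$, check from the covariance asymptotics $\rho\Sigma(a_*,b_*)\approx\operatorname{diag}(1-U''(x_0)/\rho,\ O(\rho^{-1}))$ that it is a non-degenerate sink, and then invoke Theorem~\ref{ThmCVsink}. The paper instead never classifies a fixed point: Lemma~\ref{LemMultiJ} gives the uniform asymptotics $J(\overline\pi_\rho(r,\theta))-J(\overline\pi_\rho(1,x_0))\to U(\theta)-U(x_0)+\tfrac12(1/r-1+\ln r)$, from which a $\Psi$-invariant sub-level set $B$ in a small angular sector is built, and then \cite[Theorem~7.3]{Benaim99} together with the controllability result (Proposition~\ref{PropControle}) gives $\mathbb P(d_w(\mu_t,L(B))\to0)>0$. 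The paper's Lyapunov-set argument is more robust: it does not need the fixed point near $x_0$ to be unique or hyperbolic, only that the free energy creates a well near $x_0$. Your approach, if the uniform Laplace estimates and the identification of the 2D Jacobian with the Bena\"im--Raimond spectral decomposition of $\mathcal D^2 J$ are made rigorous, would yield the stronger conclusion $\mu_t\to\nu_\rho$ a.s.\ on a positive-probability event; but it is strictly more to prove, and the main thing you should add is the verification (uniform over the Brouwer neighbourhood) that the fixed point is indeed isolated and non-degenerate, so that Theorem~\ref{ThmCVsink} applies.
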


\subsection{Organization of the paper}

More examples are given in Section \ref{SectionExemples}. In Section \ref{SectionMarkov} are gathered some results on the Markovian velocity jump process without interaction. Theorem \ref{ThmFix}, \ref{ThmCVsink} and \ref{ThmPasCVsaddle} are respectively proved in Section \ref{SectionODE1}, \ref{SectionODE2} and \ref{SectionODE3}. The case of the quadratic interaction in dimension 1 is adressed in Section \ref{SectionQuadra}, in which the different points of Theorem \ref{ThmWQuadra} are proved.

\section{Examples}\label{SectionExemples}

We will present several examples for which the invariant measure of $L^\nu$ is of the product form $\Pi(\nu) \propto \exp\po-V_{\nu}\pf \otimes q$, where $q$ is invariant by rotation and $V_\nu$ is a smooth function on $\mathcal M$. In other words, for these processes, at equilibrium, the position and the velocity are independent. This is absolutely not true in a general case of modelling (see, for instance, the work of Calvez, Raoul and Schmeiser \cite{Calvez}). However, in the case of stochastic algorithms, the measure $\exp\po-V_{\nu}\pf$ is a fixed target, and the processes are especially tuned so that $\Pi(\nu)$ be of this form. In these cases, however, $\nu \mapsto V_{\nu}$ is not of the form \eqref{EqVnuSym}, namely it is not given by an interaction potential. Hence, before presenting the dynamics that will allow to sample the target measure $\Pi(\nu)$, let us explain what is $V_\nu$ is the case of adaptive algorithms.

\subsection{Adaptive algorithms}\label{SectionAdaptive}

Adaptive Biasing Force (ABF) or Potential (ABP) algorithms have been introduced in \cite{DarvePohorille,ChipotHenin}, in the context of molecular dynamics. We will only present these methods in a simplified framework, and refer to \cite{LelievreABF,BenaimBrehier2016} for a more complete introduction and discussion. The initial problem is to sample a Gibbs law with a given potential $U: \mathbb T^d \rightarrow \R$ and inverse temperature $\beta>0$, namely to construct a process $(X_t)_{t\geqslant0}$ such that, for a given observable $f$,
\begin{eqnarray}\label{EqErgoABF}
\frac1t\int_0^t f(X_s)\dd s & \underset{t\rightarrow\infty} \longrightarrow & \frac{\int f(x) e^{-\beta U(x)} \dd x}{\int e^{-\beta U(x)}\dd x}.
\end{eqnarray}
Since $U$ typically admits several local minima separated by energy barriers, a Markov process which samples the Gibbs law will typically be metastable: the transitions from one minimum to another will be rare events (especially at low temperature, namely when $\beta$ is large). This implies that the convergence \eqref{EqErgoABF} is very slow.

For the sake of simplicity, suppose that the system is represented in such a way that the first coordinate $X^1$ of $X$ is a slow variable, while the other coordinates are fast variables. In other words, the metastability of the process is mainly due to the metastability of $X^1$, which is called a reaction coordinate (or a collective variable). In practice, it is not obvious to chose good reaction coordinates, but we won't deal with this question here. We call
\begin{eqnarray}\label{EqDefA}
A(x_1) & = & - \frac1\beta\ln \int e^{-\beta U(x)} \dd x_2,\dots,\dd x_d 
\end{eqnarray}
the free energy at $X^1 = x_1$. It is, in a sense, an effective potential when we only observe the reaction coordinate: indeed, if $X=(X^1,X^2,\dots,X^d)$ is a r.v. with law proportional to $\exp\po - \beta U\pf$, then the law of $X^1$ is proportional to $\exp(-\beta A)$. Then, to reduce the metastability of the process, one can sample the law proportional to $\exp\po - \beta(U-A)\pf$, and then correct the bias by adding an exponential weight in \eqref{EqErgoABF}:
\begin{eqnarray}\label{EqDebiaise} 
\frac{\int_0^t f(X_s) e^{-\beta A(X_s^1)}\dd s}{\int_0^t   e^{-\beta A(X_s^1)}\dd s} & \underset{t\rightarrow\infty} \longrightarrow & \frac{\int f(x) e^{-A(x_1)} e^{-\beta (U-A)(x)} \dd x}{\int e^{-\beta A(x_1)}e^{-\beta (U-A)(x)}\dd x} \ = \ \frac{\int f(x) e^{-\beta U(x)} \dd x}{\int e^{-\beta U(x)}\dd x}.
\end{eqnarray}
We say that the metastability is reduced for the following reason: if $X$ is a r.v. with law $\exp\po-\beta (U-A)\pf$, then $X^1$ is   uniform on $\T$, which means it is not metastable (there are no energy barriers. In other words, the first coordinate is forced to explore all its different levels).

However, it is not possible to sample the Gibbs law with potential $U-A$, since it is not possible to compute $A$. Indeed, \eqref{EqDefA} implies an integration over a space of dimension $d-1$, where $d$ is typically large. The idea of adaptive algorithms is to learn $A$ on the fly, by calculating the integral in \eqref{EqDefA} through an MCMC method, namely thanks to the trajectory of a stochastic process $X$, and to use, simultaneously, this computation to defined biased dynamics for $X$. There are different algorithms to do so, among which we will only present the ABP and ABF ones, which enters the framework of the present paper.

The difference between ABP and ABF method is that, in the first one, the target is $A$, while in the second, the target is $ \partial_{x_1}  A$. In ABP, typically, we set
\begin{eqnarray*}
A_\nu(z) & =& -\frac1\beta\ln \int K(z,x_1) \nu(\dd x),
\end{eqnarray*}
where $K$ is a smooth approximate identity, say $K(z,x) = (\sqrt{2\pi} \varepsilon)^{-1} \exp\po-\frac{1}{2\varepsilon}|z-x|^2\pf$ for a small $\varepsilon>0$. In ABF we use that the gradient of the genuine free energy $A$ is
\begin{eqnarray*}
 \partial_{x_1} A(x_1) & = &  \frac{\int \partial_{x_1} U(x) e^{-\beta U(x)} \dd x_2,\dots,\dd x_d  }{\int e^{-\beta U(x)} \dd x_2,\dots,\dd x_d }
\end{eqnarray*}
and set
\begin{eqnarray*}
A_\nu(z) & =& \underset{f\in H^1(\mathbb T),\ \int f = 0}{\text{argmin}} \int |\partial_{x_1} U(x) - \partial_{x_1}f(z)|^2 K(z,x_1) \nu(\dd x).
\end{eqnarray*}
In both the ABP and ABF cases, set $V_\nu(x) = U(x)- A_\nu(x_1)$. As noted in \cite{LelievreABF} (in the case of a mean-field interaction rather than a self-interaction), since the bias only concerns the first variable $X^1$, the conditional laws $\mathcal L(X^2,\dots,X^d|X^1 = x_1)$ when $X$ follows $\exp(-\beta V_{\nu})$  do not depend on $\nu$. From this, it is not difficult to see that, in the ideal case where $K(z,\cdot)$ is a Dirac mass at $z$, the unique fixed point $\nu_{*}$ of $\nu \mapsto C_{\nu}^{-1} \exp\po- \beta V_\nu\pf$ (where $C_\nu$ is the normalisation constant) is proportional to $\exp\po -\beta(U-A)\pf$. In other words, $\nu_*$ is such that $A_{\nu_*} = A$. As a consequence of Theorem \ref{ThmFix}, a SIVJP such that $\pi(\nu)\propto \exp\po -\beta V_{\nu}\pf$ will be such that $A_{\mu_t}$ converges to $A$ as $t$ goes to infinity.

This is rigorously proven in \cite{BenaimBrehier2017,EhrlacherLelievreMonmarche2018}, with a slight modification: the potential used in the dynamics is $A_{\tilde \mu_t}$ where $\tilde \mu_t$ is not the occupation measure of the process, but an unbiased occupation measure
\begin{eqnarray*}
\tilde \mu_t & =& \frac{\int_0^t \delta_{X_s} e^{-\beta A_{\nu_s}(X_s)} \dd s}{\int_0^t e^{-\beta A_{\nu_s}(X_s)}  \dd s }
\end{eqnarray*}
such as used in \eqref{EqDebiaise}. This makes the proofs simpler, since in that case, everything works as if $\pi(\nu) \propto \exp(-\beta U)$ for all $\nu$. 
Nevertheless, it is not clear whether this unbiasing is useful in practice. 
Yet, to study the non-unbiased case, some difficulties arise when the kernel $K$ is not a Dirac mass, which is necessarily the case in order for $A_{\mu_t}$ to make sense since $\mu_t$ is a singular probability (at least when the dimension of the reaction coordinate is greater than one). The question to prove the convergence of the algorithm (and to characterise its limit) in the case of the non-unbiased occupation measure $\mu_t$ is a current topic of research for Br\'ehier, Bena\"im and the author.

\subsection{More general one-dimensional processes}

We consider the generator on $\mathbb T \times \R$
\begin{eqnarray*}
Lf(x,y) & =& y\partial_x f(x,y) + \lambda^\nu(x,y) \po f(x,-y) - f(x,y)\pf + r\int \po f(x,v) - f(x,y)\pf p(\dd v),
\end{eqnarray*}
where $p$ is an even function and $r>0$. The first jump term, with rate $\lambda^\nu(x,y)$, is called a bounce: depending on its environment, the process decides to turn back, without changing its scalar velocity. The second term, at rate $r$, is a refreshment: independently from its environment, the process choses a whole new velocity. If we suppose that the typical distance covered by the process between two bounces should be independent from the scalar velocity $|y|$, then  the bounce rate should be of the form $\lambda^\nu(x,y) = |y| \lambda_{sign(y)}^\nu(x)$ where $\lambda_{+}^\nu$ and $\lambda_{-}^\nu$ are two different rate of jumps. In that case,
\begin{eqnarray*}
V_{\nu}(x) & :=& \int_0^x \po \lambda_+^\nu(z) - \lambda_-^\nu(z)\pf dz
\end{eqnarray*}
is such that $y\partial_x V_\nu (x) = \lambda^\nu(x,y) - \lambda_\nu(x,-y)$. This implies that $\Pi(\nu) \propto \exp(-V_\nu) \otimes p$, as it can be checked with an integration by parts that for all smooth $f$,
\begin{eqnarray*}
 \int L^\nu f(x,y) e^{-V_\nu(x)}\dd x p(\dd v) & = & 0.
\end{eqnarray*}
Then, Assumptions \ref{HypoUnifnu} and \ref{HypoContiNu} hold as soon as the support of $p$ is compact, and
\begin{eqnarray*}
\left\|\lambda_+^{\nu_1} - \lambda_+^{\nu_2}\right\|_\infty + \left\|\lambda_-^{\nu_1} - \lambda_-^{\nu_2}\right\|_\infty & \leqslant & C  d_{TV} \po \nu_1, \nu_2\pf
\end{eqnarray*}
for some $C>0$.

\subsection{First example on $\mathbb T^d$}\label{SectionExempleTore}

For a given $\nu\in \mathcal P(\mathbb T^d) \mapsto V_{\nu} \in \mathcal C^1(\mathbb T^ d)$ and a rotation-invariant $q\in\mathcal P\po \R^d\pf$, let us describe $L^\nu$ the generator of a velocity jump process such that $\Pi(\nu) \propto  \exp\po-V_{\nu}\pf \otimes q$, and such that Assumptions \ref{HypoUnifnu} and \ref{HypoContiNu} hold. The condition on the equilibrium of $L^\nu$ is equivalent to say that for any $f\in \mathcal C^\infty\po\T^d\times \mathbb R^d\pf$,
\begin{eqnarray*}
\int L^\nu f(x,y) e^{-V_\nu(x)} \dd x q(\dd y) & =& 0.
\end{eqnarray*} 
After an integration by parts, this equivalent to say that $\lambda^\nu$ and $h^\nu$ are (weak) solutions of
\begin{eqnarray}\label{EqInvariancePi}
 \po \po y\cdot \na V_\nu(x) \pf - \lambda^\nu(x,y)\pf q(y) + \int \lambda(x,v) h^\nu(x,v,y) q(\dd v)& =  & 0. 
\end{eqnarray}
There are many possibilities. For instance, let
\begin{eqnarray*}
\lambda^\nu_1(x,y) & =& |y| |\na V_\nu(x)| + \po y\cdot \na V_\nu(x)\pf,
\end{eqnarray*}
and let $H^\nu_1(x,y,\cdot)$ be the uniform law on the sphere of radius $|y|$, in other words
\begin{eqnarray*}
H^\nu_1 f(x,y)  &=& \int_{\mathbb S^{d-1}} f(x,|y|\theta) \dd \theta.
\end{eqnarray*}
Then $\lambda^\nu_1 \geqslant 0$ and, in a weak sense,
\begin{eqnarray*}
\int v\cdot \na V_\nu(x) h^\nu_1(x,v,y) q(\dd v) & = & 0
\end{eqnarray*}
and
\[\int  |v| |\na V_\nu(x)| h^\nu_1(x,v,y) q(\dd v) \ = \   |y| |\na V_\nu(x)| \int h^\nu_1(x,v,y) q(\dd v) \ =\ q(y),\]
where we used that $h^\nu(x,-v,y)=h^\nu(x,v,y)$, and twice that $q$ is rotation invariant. This means that \eqref{EqInvariancePi} holds. More generally, this would still hold true if we had considered 
\begin{eqnarray*}
\lambda^\nu_1(x,y) & =& a(x,|y|) + \po y\cdot \na V_\nu(x)\pf,
\end{eqnarray*}
with an arbitrary function $a$ such that $a(x,|y|) \geqslant |y\cdot \na V_\nu(x)|$ for all $x,y$.

 On the other hand, if we consider the generator
\begin{eqnarray*}
L_1^\nu f(x,y) & =& y\cdot \na_x f(x,y) + \lambda_1^\nu(x,y)\po H^\nu_1 f(x,y) - f(x,y)\pf,
\end{eqnarray*}
the scalar velocity of the associated Markov process (i.e. $|Y|$) is constant (since it is fixed by both the free transport and the jumps).  As a consequence, except if $q$ is the uniform law on a sphere, the process is not ergodic with respect to $\Pi(\nu)$. But then, as in \cite{MonmarcheRTP,Doucet2015}, we can add a second jump mechanism, namely set
\begin{eqnarray*}
L^\nu f(x,y) & =& L_1^\nu f(x,y) + \bar \lambda \po \int f(x,v) q(\dd v) - f(x,y)\pf
\end{eqnarray*}
for a fixed $\bar \lambda$ (which obviously leaves $\Pi(\nu)$ invariant), in other words set
\begin{eqnarray*}
\lambda^\nu(x,y) & =& \lambda_1^\nu(x,y) + \bar\lambda\\
H^\nu(x,y) & =& \frac{\lambda_1^\nu(x,y) }{\lambda_1^\nu(x,y) + \bar\lambda} H_1^\nu + \frac{\bar\lambda}{\lambda_1^\nu(x,y) + \bar\lambda} q.
\end{eqnarray*}
Now, suppose that $|\na V_\nu |$ is bounded uniformly over $\nu \in \mathcal P\po \mathbb T^d\pf$ (which is true for an interaction potential of the form \eqref{EqVnuSym} or such as described in Section \ref{SectionAdaptive}), and that the support of $q$ is compact (for instance, $q$ could be the uniform law on a sphere, a ball, a ring, or a truncated Gaussian law). Then Assumption \ref{HypoUnifnu} holds.

Suppose, moreover, that there exists $C>0$ such that for all $\nu_1$, $\nu_2\in \mathcal P(\mathbb T^d)$, 
\[\|\na V_{\nu_1} - \na V_{\nu_2}\|_\infty \leqslant C d_{TV}(\nu_1,\nu_2)\]
 (again, this is clear in the case \eqref{EqVnuSym} and in the ABP case presented in Section \ref{SectionAdaptive}, while the ABF case requires a little work, see \cite{EhrlacherLelievreMonmarche2018}). Then Assumption \ref{HypoContiNu} holds (note that $H_1^{\nu_1} = H^{\nu_2}_1$), and  Theorem \ref{ThmFix} holds.

\bigskip

In the case of attractive interaction, the arguments of Section \ref{Multiwell} may be straightforwardly adapted to the multi-dimensional settings (see also \cite{TugautSmallNoise}). This means that, in that case, if 
\begin{eqnarray*}
V_\nu(x) & =&  \int \po U(x) + \rho  W(x,z) + U(z)\pf \nu(\dd z)
\end{eqnarray*}
with $z=x$ being the unique global minimum of $z\mapsto W(z,x)$, then for any non-degenerate local minimum of the external potential $U$, for all $\delta>0$, there exist $\rho_0>0$ such that $\rho>\rho_0$ implies
\begin{eqnarray*}
\mathbb P\po \underset{t\rightarrow\infty}\limsup \po \int |z-x_0|^2 \mu_t\po \dd z\pf \pf<\delta \pf & > & 0.
\end{eqnarray*}

\subsection{Second example on $\mathbb T^d$}

As we noted, equation \eqref{EqInvariancePi} admits many solutions, and the ones proposed above are different from the process studied in \cite{PetersdeWith,MonmarcheRTP,Doucet2015}. Let us now briefly recall the definition of the latter, explain why it does not enter our framework and discuss the difficulties which arise from its study.

Set
\begin{eqnarray*}
\lambda^\nu_1 (x,y) & =& \po y\cdot \na U(x)\pf_+\\
H_1^\nu f(x,y) & =& f\po x, R_\nu(x,y)\pf
\end{eqnarray*}
where
\begin{eqnarray*}
 R_\nu(x,y) & =& y - 2 \frac{y\na V_\nu(x)}{|\na V_\nu(x)|^2} \na V_\nu (x)
\end{eqnarray*}
is the  reflection of $y$ with respect to $V_\nu(x)$. Then, \eqref{EqInvariancePi} holds (see \cite[Section 1.4]{MonmarcheRTP}). As in the previous section, the scalar velocity is unchanged when the process jumps, but then another jump mechanism can be added. Finally, we set
\begin{eqnarray*}
\lambda^\nu(x,y) & =& \lambda_1^\nu(x,y) + \bar \lambda \\
H^\nu(x,y) & =& \frac{\lambda_1^\nu(x,y) }{\lambda_1^\nu(x,y) + \bar\lambda} H_1^\nu + \frac{\bar\lambda}{\lambda_1^\nu(x,y) + \bar\lambda} q
\end{eqnarray*}
for some constant $\bar\lambda>0$. With this definition, $\exp(-V_\nu)\otimes q$ is invariant for $L^\nu$ and, under the same assumptions on $\nu\mapsto V_\nu$ that in the previous section, Assumption \ref{HypoUnifnu} holds, and
\begin{eqnarray*}
\|\lambda^{\nu_1} - \lambda^{\nu_2} \|_\infty & \leqslant & C d_{TV}(\nu_1,\nu_2)
\end{eqnarray*}
for some $C>0$. Nevertheless, the second part of Assumption \ref{HypoContiNu} does not hold. Indeed, note that $H_1^\nu$ is a Dirac measure, so that 
\[d_{TV}\po H^{\nu_1}_1(x,y) , H^{\nu_2}_1(x,y)\pf \ = \ 1\]
as soon as $R_{\nu_1}(x,y) \neq R_{\nu_2}(x,y)$, which may be true for $d_{TV}(\nu_1,\nu_2)$ arbitrarily small.

To avoid this problem, we could try to work with a different metric on $\mathcal P\po \T^d\pf$, for instance a Wasserstein one. Recall that the $\mathcal W_1$ Wasserstein distance is
\begin{eqnarray*}
\mathcal W_1\po\nu_1,\nu_2\pf &:=& \inf\left\{ \mathbb E\po  |V_1- V_2|\pf,\ Law(V_i) = \nu_i,\ i=1,2\right\}.
\end{eqnarray*}

We may establish a bound
\begin{eqnarray*}
| R_{\nu_1}(x,y) - R_{\nu_2}(x,y) | & \leqslant & \frac{C}{|\na V_{\nu_1}|\wedge |\na V_{\nu_2}|}  \mathcal W_1 (\nu_1,\nu_2).
\end{eqnarray*}
The term $|\na V_{\nu_1}|\wedge |\na V_{\nu_2}|$ is not necessarily a real problem. This means we may have something like
\begin{eqnarray*}
\left|\po \lambda_1^{\nu_1}\wedge\lambda_1^{\nu_2}\pf \po H_1^{\nu_1} f  -  H_1^{\nu_2} f \pf\right| & \leqslant &  C \mathcal W_1 (\nu_1,\nu_2) \| \na_y f\|_\infty.
\end{eqnarray*}
Is this estimate useful ? If we try to follow the proof of Theorem \ref{ThmFix}, in the proof of Lemma~\ref{LemPK}, when coupling two processes (respectively associated with $L^{\nu_1}$ and $L^{\nu_2}$), at a jump time, even if both jump simultaneously, the new velocities are slightly different. Thus, even if the coupling is still a success at some time $t$, the two processes have drifted away one from the other. Hence, we may obtain a bound of the form 
\begin{eqnarray*}
\|P_t^{\nu_1} f - P_t^{\nu_2} f\|_\infty & \leqslant &  C(t) \po \| f\|_\infty +\|\nabla f\|_\infty\pf \mathcal W_1 (\nu_1,\nu_2).
\end{eqnarray*}
for some locally finite function $C$, which is not a problem by itself. But then in the proof of Lemma \ref{LemQ} (as it is for now) we would need to control $\|\nabla Q^\nu f\|_\infty$, where $Q^\nu f = \int_0^\infty P_t f\dd t$. It is unclear whether it is possible, since the semi-group has no regularisation property, and is not a contraction of the Wasserstein space. 

For now, we haven't find a way to prove Theorem \ref{ThmFix} for this process.


\subsection{Isotropic interaction on the sphere}

The unit tangent bundle of the sphere $\mathbb S^{d-1}$ ($d\geqslant 2$) may be seen as
\[\EE \ =\ \left\{(x,y) \in \R^{2d}, \ |x|=|y|=1,\ y\cdot x = 0\right\}.\]
In that case, 
\begin{eqnarray*}
\varphi_t(x,y) & =& \po x \cos( t) + y\sin\po  t\pf, -x\sin(t) + y \cos(t)\pf
\end{eqnarray*}
and, for $f \in \mathcal C^\infty \po \R^{2d}\pf$,
\[D f(x,y) = \underset{t\rightarrow 0}\lim \frac{f\po \varphi_t(x,y)\pf - f(x,y)}{t}\ = \ y\cdot \na_x f(x,y) -   x\cdot \na_y f(x,y).\]
For $\nu\in\mathcal P(\mathbb S^{d-1})$, let $V_\nu(x) = \int W(x,z)\nu(\dd z)$, with a smooth interaction potential $W$ which is rotation invariant, namely $W(Rx,Ry) = W(x,y)$ for any rotation $R$ of $\mathbb S^{d-1}$. By analogy with the torus case of Section \ref{SectionExempleTore}, suppose that the jump rate only depends on $y\cdot \na V_\nu(x)$, namely that $\lambda^\nu(x,y) = \psi(y\cdot \na V_\nu)$ for some continuous, positive function $\psi$, and suppose moreover that for all $(x,y)\in\EE$, the jump kernel $H^\nu(x,y)$ is uniform over $\mathbb S^{d-2}_x=\{v\in \mathbb S^{d-1},\ x\cdot v = 0\}$. Then Assumptions \ref{HypoUnifnu} and \ref{HypoContiNu} holds.

\bigskip

Contrary to the torus case, the invariant measure of $L^\nu = D + \lambda^\nu (H^\nu - I)$ is not explicit, except for one case. Indeed, denote $\nu_0$ the uniform law over $\mathbb S^{d-1}$. Since $W$ and $\nu_0$ are rotation invariant, $x\mapsto V_{\nu_0}$ is constant, so that $y\cdot\na V_{\nu_0}(x)=0$ for all $(x,y)\in\EE$, and
\begin{eqnarray*}
L^{\nu_0} f(x,y) & =& D f(x,y) + \psi(0) \po \int_{\mathbb S^{d-2}_x} f(x,v) \dd v - f(x,y)\pf .
\end{eqnarray*}
The invariant measure of $L^{\nu_0}$ is the uniform measure over $\EE$. Indeed, the latter is left invariant both by $\varphi_t$ for all $t\geqslant 0$ and by the jump part of $L^{\nu_0}$. As a consequence, $\pi(\nu_0) = \nu_0$, in other words $\nu_0 \in Fix(\pi)$.

If $(Z_t,\mu_t)_{t\geqslant}$ is a SIVJP on $\EE$ with generator $L^\nu$, even for particular choices of $\psi$, it is not clear whether $\mathbb P\po \mu_t \rightarrow \nu_0\pf$ is 1, positive or zero; whether $Fix(\pi)$ is reduced to $\nu_0$ or not and, if it is not, if $\nu_0$ is a sink for the deterministic asymptotic flow $\Psi$ on $\mathcal P(\mathcal M)$. We leave this question for further work.


\section{Preliminary results without self-interaction}\label{SectionMarkov}

In this section, we study, for a fixed $\nu$, the Markov semi-group $(P_t^\nu)_{t\geqslant0}$ with generator
\begin{eqnarray}\label{EqGeneMarkovNu}
L^\nu f(x,y)\ &=& D f(z) + \lambda^\nu\po z\pf \po   H^\nu f(z)   - f(z)\pf.
\end{eqnarray} 

\subsection{Equilibrium}

The ergodicity of the process will be established by usual coupling arguments, very similar to \cite[Lemma 5.2]{MonmarcheRTP}. Nevertheless, as we wish to obtain estimates which are uniform over $\nu \in \mathcal P\po \mathcal M\pf$, we will use a reference dynamics which does not depend on $\nu$. Let $\lambda_{\min}$ and $c$ be given by Assumption \ref{HypoUnifnu}, which is enforced in all this section. Set $\lambda^0 = c\lambda_{\min}$,
\begin{eqnarray*}
H^0 f(x,y) & = & \int f(x,r\theta) p(\dd r)\dd \theta,\\
L^0 f(z) &=& Df(z) + \lambda^0\po H^0 f(z) - f(z)\pf,
\end{eqnarray*}
and $(P_t^0)_{t\geqslant 0}$ be the Markov semi-group associated with $L^0$. Then, we can decompose
\begin{eqnarray*}
L^\nu f(z) & = & L^0 f(z) + \tilde \lambda^\nu(z) \po \tilde H^\nu f(z) - f(z)\pf,
\end{eqnarray*}
where
\begin{eqnarray*}
\tilde \lambda^\nu(z)& = & \lambda^\nu(z) - c\lambda_{\min}\\
\tilde H^\nu f(z) & = & \frac{\lambda^\nu(z) - \lambda_{\min}}{\lambda^\nu(z) - c\lambda_{\min}} H^\nu f(z) + \frac{(1-c)\lambda_{\min}}{\lambda^\nu(z) - c\lambda_{\min}} \frac{(H^\nu - c H^0) f(z)}{1-c}.
\end{eqnarray*}
From Assumption \ref{HypoUnifnu}, $\tilde \lambda^\nu\geqslant 0$, and $(1-c)^{-1}(H^\nu - c H^0) f(z)$ (hence $\tilde H^\nu$) is a Markov operator. From this decomposition, we get the following:

\begin{lem}\label{LemP01}
For all positive $f\in L^\infty\po \EE\pf$, $\nu \in \mathcal P\po\mathcal M\pf$, $t\geqslant0$ and $z\in\EE$,
\begin{eqnarray*}
P_t^\nu f(z) & \geqslant & e^{-\lambda_{\max} t} P_t^0 f (z).
\end{eqnarray*}
\end{lem}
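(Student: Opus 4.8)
The plan is to exploit the decomposition $L^\nu = L^0 + \tilde\lambda^\nu(\tilde H^\nu - I)$ established just above, together with a Dyson--Duhamel (Feynman--Kac / perturbation) expansion relating the semigroups $P_t^\nu$ and $P_t^0$. Writing $L^\nu = (L^0 - \lambda_{\max} I) + \big(\tilde\lambda^\nu \tilde H^\nu + (\lambda_{\max} - \tilde\lambda^\nu) I\big)$, the operator $Q^\nu := \tilde\lambda^\nu \tilde H^\nu + (\lambda_{\max}-\tilde\lambda^\nu)I$ is a positive operator (it maps nonnegative functions to nonnegative functions), because $\tilde\lambda^\nu \geqslant 0$, $\tilde H^\nu$ is a Markov operator, and $\tilde\lambda^\nu(z) = \lambda^\nu(z) - c\lambda_{\min} \leqslant \lambda^\nu(z) < \lambda_{\max}$ by Assumption~\ref{HypoUnifnu}, so the coefficient $\lambda_{\max}-\tilde\lambda^\nu$ is positive. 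Thus $L^\nu = S + Q^\nu$ where $S := L^0 - \lambda_{\max}I$ generates the semigroup $e^{-\lambda_{\max}t}P_t^0$ and $Q^\nu$ is positivity-preserving.

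The key step is the Duhamel formula in its integrated, iterated form:
\begin{eqnarray*}
P_t^\nu f & = & e^{-\lambda_{\max}t}P_t^0 f + \int_0^t e^{-\lambda_{\max}(t-s)}P_{t-s}^0 \, Q^\nu P_s^\nu f \, \dd s,
\end{eqnarray*}
valid on the core $\mathcal C^1_b(\TM)$ and extended by density/monotone limits to positive bounded $f$. Iterating this identity gives $P_t^\nu f = \sum_{n\geqslant 0} I_n(f)$ where $I_0(f) = e^{-\lambda_{\max}t}P_t^0 f$ and each $I_n(f)$ is an $n$-fold time integral of compositions of the positive operators $e^{-\lambda_{\max}\cdot}P_\cdot^0$ and $Q^\nu$ applied to $f$. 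Since $f\geqslant 0$ and every building block preserves positivity, every term $I_n(f)$ is nonnegative, and therefore $P_t^\nu f \geqslant I_0(f) = e^{-\lambda_{\max}t}P_t^0 f$, which is exactly the claim.

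The main obstacle is making the Duhamel expansion rigorous at the level of generators: one must justify that $\mathcal C^1_b(\TM)$ is a core for all the relevant generators (already noted in the excerpt), that the integral equation holds there, and that both sides extend to all positive $f \in L^\infty(\EE)$ by monotone convergence (approximating $f$ from below by smooth functions, or using that $P_t^\nu$, $P_t^0$ and $Q^\nu$ are all monotone). Convergence of the series is not an issue because the total mass is controlled: applying the identity to $f\equiv 1$ and using $Q^\nu 1 = \lambda_{\max} \cdot 1$ one checks the partial sums are bounded by $1$ (indeed $P_t^\nu 1 = 1$), so the series converges and the interchange of sum and limit is legitimate. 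Alternatively — and perhaps more cleanly — one can give a purely probabilistic proof: realize $P_t^\nu$ via the PDMP with an extra ``virtual jump'' representation in which jumps occur at rate $\lambda_{\max}$, at each jump the velocity is redrawn from $H^0$ with probability $c\lambda_{\min}/\lambda_{\max}$ and otherwise according to the complementary kernel; then $e^{-\lambda_{\max}t}P_t^0 f(z)$ is precisely $\mathbb E_z[f(Z_t)\mathbf{1}_{\text{all jumps up to }t\text{ were }H^0\text{-type and followed the }L^0\text{ law}}]$, a sub-event, whence the inequality for $f\geqslant 0$. I would present the probabilistic argument as the main line and mention the analytic Duhamel one as an alternative.
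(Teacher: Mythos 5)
Your proposal is correct but takes a different route from the paper. The paper's own proof is a direct coupling argument: it runs an $L^0$-process $Z'$ started at $z$, introduces an independent standard exponential $E$, and defines $T = \inf\{t>0 : E < \int_0^t\tilde\lambda^\nu(Z_s')\,\dd s\}$, the first time the ``extra'' jump mechanism (rate $\tilde\lambda^\nu$, kernel $\tilde H^\nu$) fires; until $T$ the $L^\nu$-process agrees with $Z'$. Since $\tilde\lambda^\nu < \lambda_{\max}$, the event $\{E>\lambda_{\max}t\}$ (of probability $e^{-\lambda_{\max}t}$, independent of $Z'$) is contained in $\{T>t\}$, whence $P_t^\nu f(z) \geqslant \mathbb E\big[f(Z_t')\mathbb 1_{E>\lambda_{\max}t}\big] = e^{-\lambda_{\max}t}P_t^0 f(z)$. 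Your Duhamel/Dyson-series argument is sound and genuinely different: it is a purely algebraic positivity argument built on the splitting $L^\nu = (L^0 - \lambda_{\max}I) + \big(\tilde\lambda^\nu\tilde H^\nu + (\lambda_{\max}-\tilde\lambda^\nu)I\big)$, the perturbation is positivity-preserving and bounded by $\lambda_{\max}$ in operator norm so the series converges, and every term beyond the zeroth is nonnegative; the only caveat is that your notation $Q^\nu$ clashes with the potential operator $Q^\nu = -\int_0^\infty P_t^\nu K^\nu\,\dd t$ defined later in this same section. Your probabilistic alternative, which you intend as the main line, contains a small error: in the rate-$\lambda_{\max}$ thinned construction, Poisson splitting shows the $H^0$-type jumps form a rate-$c\lambda_{\min}$ Poisson process and the other jumps an independent rate-$(\lambda_{\max}-c\lambda_{\min})$ one, so $\mathbb E_z\big[f(Z_t)\mathbb 1_{\text{no non-}H^0\text{-type jump by }t}\big] = e^{-(\lambda_{\max}-c\lambda_{\min})t}P_t^0 f(z)$, not $e^{-\lambda_{\max}t}P_t^0 f(z)$ as you assert. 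Since $e^{-(\lambda_{\max}-c\lambda_{\min})t} \geqslant e^{-\lambda_{\max}t}$, this still yields the claim (in fact a sharper bound), so the slip is harmless, but the ``precisely'' is wrong. The paper's coupling reaches the advertised constant exactly by conditioning on the cruder event $\{E>\lambda_{\max}t\}$ rather than on $\{T>t\}$, which sidesteps this computation entirely.
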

\begin{proof}
Let $(Z_t')_{t\geqslant 0}$ be a Markov process with generator $L^0$ and $Z_0' = z$, so that $P_t^0 f (z) = \E\po f(Z_t')\pf$. Let $E$ be a standard exponential random variable, independent from $Z'$, and
\begin{eqnarray*}
T & =& \inf \left\{  t>0,\ E < \int_0^t \tilde \lambda^\nu (Z_s') \dd s\right\}.
\end{eqnarray*}
Set $Z_t = Z_t'$ for $t<T$, and draw $Z_T$ according to the law $\tilde H(Z_T')$. For $t\geqslant T$, let $Z$ evolve according to the Markov dynamics with generator $L^\nu$, independently from $Z'$. Then $Z$ is a Markov process with generator $L^\nu$ and $Z_0 = z$, so that
\begin{eqnarray*}
P_t^\nu f(z) & = & \E \po f(Z_t)\pf\\
& = & \mathbb P \po E > \lambda_{\max} t \pf \E \po f(Z_t)\ |\ E > \lambda_{\max} t \pf + \mathbb P \po E \leqslant \lambda_{\max} t \pf \E \po f(Z_t)\ |\ E \leqslant \lambda_{\max} t \pf\\
& \geqslant & e^{-\lambda_{\max}t} \E \po f(Z_t)\ |\ E > \lambda_{\max} t \pf.
\end{eqnarray*}
Note that $E > \lambda_{\max} t $ implies that $t<T$, hence $Z_t = Z_t'$. Since $E$ is independent from $Z_t'$, $\E \po f(Z_t)\ |\ E > \lambda_{\max} t \pf = P_t^0 f(z)$.
\end{proof}

The next step is a Doeblin minoration condition for $P^0$:

\begin{lem}\label{LemP02}
There exist $t_0,\kappa>0$ such that for all positive $f\in L^\infty\po \EE\pf$,
\begin{eqnarray*}
P_{t_0}^0 f(z) & \geqslant & \kappa \int f(u,v) p(\dd r)\dd \theta \dd u \ := \ \kappa\int f(z) m_0(\dd z),
\end{eqnarray*}
where $\dd u$ stands for the uniform law on $\mathcal M$.
\end{lem}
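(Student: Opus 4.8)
The plan is to extract the Doeblin bound from $P_{t_0}^0 f(z)=\mathbb E_z\big[f(Z_{t_0})\big]$ by restricting the expectation to a carefully chosen event. Recall that the $L^0$-process follows the geodesic flow $\varphi$ and, at the jump times of a \emph{constant}-rate ($\lambda^0$) Poisson process — hence a clock independent of the past and of the other sources of randomness — refreshes its velocity to $R\Theta$, with $R\sim p$ and $\Theta$ distributed according to the given rotation-invariant law on the current tangent fibre; write $\sigma$ for the (base-point independent, in an isometric trivialization of the tangent fibres) law of $R\Theta$, so that $m_0=\mathrm{Unif}_{\mathcal M}\otimes\sigma$. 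The single geometric ingredient is that $m_0$ is invariant under $\varphi_t$ for every $t\geqslant 0$: disintegrating $\sigma$ over the speed $|v|$ exhibits $m_0$ as a mixture of the kinematic (Liouville) measures carried by the sphere bundles $\{|y|=\rho\}$, and each of these is classically preserved by the geodesic flow.

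Fix an integer $k$, a time $t_0>0$ and a set $G\subset\{0<s_1<\dots<s_k<t_0\}$ of positive Lebesgue measure, all chosen below, and let $A$ be the event ``exactly $k$ jumps occur in $[0,t_0]$, at times $(S_1,\dots,S_k)\in G$''. On $A$ one has $Z_{t_0}=\varphi_{t_0-S_k}(X_{S_k},V_k)$ with $V_k=R_k\Theta_k$; moreover, since the Poisson clock and the successive refreshments $V_1,\dots,V_k$ are mutually independent, conditionally on $\{(S_i)=(s_i)\}$ the refreshment $V_k$ is independent of $(V_1,\dots,V_{k-1})$, hence of $X_{s_k}$, and its conditional law given $X_{s_k}$ is, in the trivialization, the fixed law $\sigma$. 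Therefore the conditional law of $(X_{s_k},V_k)$ dominates $\mathcal L(X_{s_k})\otimes\sigma$, and if $\mathcal L(X_{s_k})\geqslant\kappa_1\,\mathrm{Unif}_{\mathcal M}$ this dominates $\kappa_1\,m_0$. Applying the $\varphi_{t_0-s_k}$-invariance of $m_0$, $\mathcal L\big(Z_{t_0}\mid(S_i)=(s_i)\big)\geqslant\kappa_1\,m_0$; averaging over $(s_i)\in G$ and using $\mathbb P(A)=(\lambda^0)^k e^{-\lambda^0 t_0}\,|G|>0$ gives $P_{t_0}^0 f(z)\geqslant\kappa\int f(z)\,m_0(\dd z)$ with $\kappa=\kappa_1(\lambda^0)^k e^{-\lambda^0 t_0}|G|$, uniformly in $z\in\EE$ and in positive $f$.

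Everything thus reduces to the controllability estimate: there exist $k$, $t_0$, $G$ and $\kappa_1>0$ such that for all $z\in\EE$ and $(s_i)\in G$ one has $\mathcal L(X_{s_k})\geqslant\kappa_1\,\mathrm{Unif}_{\mathcal M}$. Now $X_{s_k}$ is built from the deterministic point $x_1=\varphi_{s_1}^{(1)}(z)$ by $k-1$ elementary steps — refresh the velocity, then follow the geodesic for the time $s_{i+1}-s_i$ — and $x_1$ ranges over all of $\mathcal M$, so it suffices to prove that, starting from an arbitrary point of $\mathcal M$, after $k-1$ such steps with inter-step durations in fixed small intervals, the law of the position dominates a fixed positive multiple of $\mathrm{Unif}_{\mathcal M}$, uniformly in the starting point. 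I would obtain this from a local spreading lemma: within the injectivity radius the exponential map is a diffeomorphism whose Jacobian is bounded below by compactness, so one or two steps produce a position law dominating a positive multiple of the uniform measure on a ball of fixed radius around the current point, uniformly; one then iterates and chains this along a finite cover of $\mathcal M$, which is possible since $\mathcal M$ is compact and connected, with $k$ controlled in terms of the diameter and injectivity radius of $\mathcal M$ and of a fixed $\bar r\in\mathrm{supp}(p)$. This last step is the technical heart of the lemma; the genuinely delicate point is to obtain an honest density lower bound — not merely the correct support — for the position after finitely many jumps, uniformly in the starting point, which is exactly why one works inside the injectivity radius, uses compactness, and exploits that two consecutive refreshments already supply enough transverse directions to reach all of $\mathcal M$ with bounded-below density. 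The argument runs entirely parallel to \cite[Lemma 5.2]{MonmarcheRTP}.
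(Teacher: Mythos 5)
Your proposal is correct and reaches the same conclusion, but it reorganizes the argument in a way that is genuinely different from the paper's and worth noting. The paper conditions only on the \emph{first} jump time $t_1$, which supplies an isotropic velocity, and then invokes (an adaptation of) an iterated local Doeblin estimate from \cite{BLBMZ3} to propagate a lower bound on the \emph{full phase-space} law of $Z_t$ starting from $(x, \mathrm{isotropic})$ over a macroscopic time $n\varepsilon$, comparing this law directly against $m_0$. You instead condition on the whole jump skeleton $(S_1,\dots,S_k)\in G$, reduce everything to a lower bound on the \emph{position} law $\mathcal L(X_{s_k})$ alone, and then obtain the phase-space bound automatically: the $k$-th refreshment tensors on the isotropic velocity law $\sigma$, and the invariance of $m_0=\mathrm{Unif}_{\mathcal M}\otimes\sigma$ under the geodesic flow (a Liouville-measure fact, which you verify correctly by disintegrating $\sigma$ over the speed) transports the bound to time $t_0$ without further estimates. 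This decoupling — position controllability $\times$ automatic velocity isotropy $\times$ flow-invariance of $m_0$ — is cleaner and makes explicit an ingredient the paper never names, at the price of a stronger controllability statement (a lower bound on $\mathcal L(X_{s_k})$ holding \emph{uniformly} over $(s_i)\in G$ rather than only in average). The technical core is the same in both proofs and is delegated in both: your local spreading lemma plays exactly the role of the local Doeblin bound the paper borrows from \cite{BLBMZ3}. Two small points worth tightening in your sketch: the "isometric trivialization" of $T\mathcal M$ does not exist globally on a general compact manifold (e.g.\ $\mathbb S^2$), so it is better to phrase the base-point independence of $\sigma$ purely through rotational invariance without invoking a global frame; and since the refreshment distribution $\dd\theta$ of Assumption~\ref{HypoUnifnu} is uniform on the unit \emph{ball}, the refreshed velocity already has a density on $T_x\mathcal M$, so a single refresh-plus-flow step suffices for the local spreading — you do not need two transverse refreshments as your parenthetical hedging suggests.
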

\begin{proof}
Adapting the proofs of \cite[Proposition 2.2 and Theorem 6.2]{BLBMZ3}, we get that $P^0$ is a Feller semi-group and that, for $\varepsilon \in [\varepsilon_0,2\varepsilon_0]$ for a fixed small $\varepsilon_0>0$, for all $x\in\mathcal M$, there exist $\delta,\kappa >0$ such that for all positive $f\in L^\infty\po \EE\pf$, 
\begin{eqnarray*}
\int P_{\varepsilon}^0 f(x,r\theta)p(\dd r)\dd \theta & \geqslant & \kappa \int_{\mathcal B_x(\delta)}\po \int f(u,r\theta) p(\dd r)\dd \theta \pf \dd u,
\end{eqnarray*}
where $\mathcal B_x(\delta)$ is the ball in $\mathcal M$ centered at $x$ with radius $\delta$. By density, we can restrict to continuous functions $f$, from which we get that this inequality holds with $\kappa,\delta>0$ which are uniform over $x\in\mathcal M$ and $\varepsilon \in [\varepsilon_0,2\varepsilon_0]$. Hence, for $n\in \mathbb N$ large enough, there exist $\kappa_n>0$ such that, for all $x\in\mathcal M$, $\varepsilon \in [\varepsilon_0,2\varepsilon_0]$ and positive $f$,
\begin{eqnarray*}
\int P_{n\varepsilon}^0 f(x,r\theta)p(\dd r)\dd \theta & \geqslant & \kappa_n   \int f(u,r\theta) p(\dd r)\dd \theta  \dd u.
\end{eqnarray*}
For a given $z\in\EE$, consider $(Z_t)_{t\geqslant0}$ a process with generator $L^0$ and $Z_0 = z$, and $t_1$ its first jump time. Then, for $t\geqslant 2n\varepsilon_0$,
\begin{eqnarray*}
P_t^0 f(z) & =& \E \po f(Z_t)\pf\\
& \geqslant & \mathbb P \po t-2n\varepsilon_0\leqslant t_1\leqslant t - n\varepsilon_0  \pf \E \po f(Z_t) \left|t-2n\varepsilon_0\leqslant t_1\leqslant t - n\varepsilon_0 \right. \pf\\
& =& \int_{t-2n\varepsilon_0}^{t-n\varepsilon_0} \po \int P_{t-s}^0 f\po \varphi_s^{(1)}(z),r\theta\pf p(\dd r) \dd \theta\pf \lambda^{0} e^{-\lambda_0 s} \dd s\\
& \geqslant &  \kappa_n \po e^{-\lambda^0(t-2n\varepsilon)} - e^{-\lambda^0(t-n\varepsilon)}\pf  \int f(z) m_0(\dd z),
\end{eqnarray*}
which concludes
\end{proof}

\begin{prop}\label{PropErgodique}
There exist $C_1,\rho>0$ such that for all $m_1,m_2 \in \mathcal P \po \EE\pf$, $\nu\in\mathcal P\po\mathcal M\pf$ and $t\geqslant 0$,
\begin{eqnarray*}
d_{TV}\po m_1 P_t^\nu,m_2 P_t^\nu\pf & \leqslant & C_1 e^{-\rho t} d_{TV}\po m_1,m_2\pf. 
\end{eqnarray*}
\end{prop}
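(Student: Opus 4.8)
The plan is to combine the two previous lemmas into a standard Doeblin-type contraction estimate, uniform in $\nu$. First I would observe that Lemmas \ref{LemP01} and \ref{LemP02} together give a uniform minorization for $P^\nu$ itself: for all $\nu\in\mathcal P(\mathcal M)$, all positive $f\in L^\infty(\EE)$ and all $z\in\EE$,
\[
P_{t_0}^\nu f(z)\ \geqslant\ e^{-\lambda_{\max}t_0}P_{t_0}^0 f(z)\ \geqslant\ e^{-\lambda_{\max}t_0}\kappa\int f\,\dd m_0\ =:\ \alpha\int f\,\dd m_0,
\]
with $\alpha=\kappa e^{-\lambda_{\max}t_0}\in(0,1)$ independent of $\nu$. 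In measure terms this reads $z P_{t_0}^\nu\geqslant \alpha\, m_0$ for every $z$, hence $m P_{t_0}^\nu\geqslant\alpha\,m_0$ for every $m\in\mathcal P(\EE)$.

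Next I would run the classical argument: for $m_1,m_2\in\mathcal P(\EE)$ write $m_i=\alpha m_0+(1-\alpha)\tilde m_i$ after propagating through $P_{t_0}^\nu$ — more precisely, from $m_iP_{t_0}^\nu\geqslant\alpha m_0$ we get $m_iP_{t_0}^\nu=\alpha m_0+(1-\alpha)\tilde m_i$ for some probability measures $\tilde m_i$, so that
\[
m_1P_{t_0}^\nu-m_2P_{t_0}^\nu\ =\ (1-\alpha)(\tilde m_1-\tilde m_2),
\]
and therefore $d_{TV}(m_1P_{t_0}^\nu,m_2P_{t_0}^\nu)\leqslant(1-\alpha)d_{TV}(\tilde m_1,\tilde m_2)\leqslant(1-\alpha)$. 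Combined with the trivial contraction $d_{TV}(m_1P_t^\nu,m_2P_t^\nu)\leqslant d_{TV}(m_1,m_2)$ (which holds because $P_t^\nu$ is Markov), a coupling/telescoping argument using the Chapman--Kolmogorov relation $P_{(n+1)t_0}^\nu=(P_{t_0}^\nu)^{\,\text{iterated with time shifts}}$ — note that although $L^\nu$ itself does not change with time, here all the $P_t^\nu$ share the same generator $L^\nu$ for fixed $\nu$, so $P_{nt_0}^\nu=(P_{t_0}^\nu)^n$ and the iteration is genuinely a semigroup power — gives
\[
d_{TV}(m_1P_{nt_0}^\nu,m_2P_{nt_0}^\nu)\ \leqslant\ (1-\alpha)^n\,d_{TV}(m_1,m_2).
\]
Writing an arbitrary $t\geqslant0$ as $t=nt_0+s$ with $s\in[0,t_0)$ and using the trivial bound on the remaining time $s$ yields the claim with $\rho=-t_0^{-1}\ln(1-\alpha)$ and $C_1=(1-\alpha)^{-1}$.

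The only genuine subtlety — rather than an obstacle — is bookkeeping the uniformity in $\nu$: one must check that $t_0$, $\kappa$ (from Lemma \ref{LemP02}, which concerns only $L^0$ and is automatically $\nu$-free) and $\lambda_{\max}$ (from Assumption \ref{HypoUnifnu}) do not depend on $\nu$, which is exactly why the reference dynamics $L^0$ was introduced. Since Lemma \ref{LemP01} is stated uniformly in $\nu$ and Lemma \ref{LemP02} does not involve $\nu$ at all, the constant $\alpha$ above, and hence $C_1$ and $\rho$, are uniform, and the proposition follows. One should also note existence and uniqueness of an invariant measure $\Pi(\nu)$ for $L^\nu$ (as announced before Lemma \ref{LemMesureInvariante}) is an immediate consequence: the contraction makes $(mP_t^\nu)_{t}$ Cauchy in $d_{TV}$, and the limit is the unique fixed point.
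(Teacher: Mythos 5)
Your proposal follows the paper's proof almost exactly: both combine Lemmas \ref{LemP01} and \ref{LemP02} into a uniform Doeblin minorization $P_{t_0}^\nu f\geqslant\alpha\int f\,\dd m_0$ (with $\alpha$ independent of $\nu$ by construction of the reference dynamics $L^0$) and then iterate in steps of $t_0$, mopping up the fractional part of $t/t_0$ with the trivial contraction.

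One displayed inequality in your one-step argument is weaker than what you later use. Writing $m_iP_{t_0}^\nu=\alpha m_0+(1-\alpha)\tilde m_i$ ``for some probability measures $\tilde m_i$'' and bounding $d_{TV}(\tilde m_1,\tilde m_2)\leqslant 1$ only yields $d_{TV}(m_1P_{t_0}^\nu,m_2P_{t_0}^\nu)\leqslant 1-\alpha$, which does not iterate to $(1-\alpha)^n d_{TV}(m_1,m_2)$. What makes the factor $d_{TV}(m_1,m_2)$ appear is the observation that the minorization is pointwise in $z$, so that $\tilde R:=(1-\alpha)^{-1}\po P_{t_0}^\nu-\alpha m_0\pf$ is a \emph{single} Markov kernel and $\tilde m_i=m_i\tilde R$; then $d_{TV}(\tilde m_1,\tilde m_2)=d_{TV}(m_1\tilde R,m_2\tilde R)\leqslant d_{TV}(m_1,m_2)$ by Markovianity, and the one-step contraction with the correct prefactor follows. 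The paper encodes precisely this by constructing the explicit three-case coupling of $(Z_1^1,Z_1^2)$ out of the common part $m_0$ (chosen with probability $\kappa$) and the residual kernel $\tilde R$. With that identification made, your iteration, your choice $\rho=-t_0^{-1}\ln(1-\alpha)$, and your $C_1=(1-\alpha)^{-1}$ match the paper's bound $(1-\kappa)^{\lfloor t/t_0\rfloor}d_{TV}(m_1,m_2)$.
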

\begin{proof}
From Lemmas \ref{LemP01} and \ref{LemP02}, there exist $t_0,\kappa>0$ such that for all $\nu\in\mathcal P\po\mathcal M\pf$ and $z\in\EE$,
\begin{eqnarray}\label{EqDoeblinPt0}
P_{t_0}^\nu f(z) & \geqslant & \kappa \int f(z) m_0(\dd z),
\end{eqnarray}
so that $\tilde R = (1-\kappa)^{-1} (P_{t_0} - \kappa m_0)$ is a Markov operators.

Let $(Z_0^1,Z_0^2)$ be an optimal coupling of $(m_1,m_2)$, in the sense that $Z_0^i \sim m_i$ for $i=1,2$ and that
\begin{eqnarray*}
\mathbb P\po Z_0^1 \neq Z_0^2\pf & =& d_{TV}(m_1,m_2).
\end{eqnarray*}
Let $U_0$, $U_1$ and $U_2$ be random variables with respective laws $m_0$, $\delta_{Z_0^1} \widetilde R$ and $\delta_{Z_0^1} \widetilde R$. With probability $\kappa$, set $Z_1^1 = Z_1^2 = U_0$. Else (with probability $(1-\kappa)$), if $Z_0^1 = Z_0^2$, set $Z_1^1 = Z_1^2 = U_1$, and if $Z_0^1 \neq Z_0^2$, set $Z_1^1 =  U_1$ and $Z_1^2 = U_2$. Thus, $Z_1^1 \sim m_1 P^\nu_{t_0}$ and $Z_1^2 \sim m_2 P^\nu_{t_0}$, and
\[ d_{TV}\po m_1 P_{t_0}^\nu,m_2 P_{t_0}^\nu\pf  \ \leqslant \ \mathbb P\po Z_1^1 \neq Z_1^2\pf \ \leqslant \ (1-\kappa)  d_{TV}\po m_1  ,m_2  \pf . \]
On the other hand, the Markov property of $P_t^\nu$, for all $t\geqslant 0$, implies that
\[ d_{TV}\po m_1 P_{t}^\nu,m_2 P_{t}^\nu\pf  \ \leqslant \  d_{TV}\po m_1  ,m_2  \pf, \]
so that the semi-group property of $P^\nu$ yields
\begin{eqnarray*}
d_{TV}\po m_1 P_t^\nu,m_2 P_t^\nu\pf & \leqslant &  (1-\kappa)^{\lfloor t/t_0\rfloor} d_{TV}\po m_1,m_2\pf. 
\end{eqnarray*}
\end{proof}

\begin{lem}\label{LemMesureInvariante}
For all $\nu \in \mathcal P\po\mathcal M\pf$, the Markov semi-group $P^\nu$ admits a unique invariant measure $\Pi(\nu)\in\mathcal P\po\EE\pf$, whose first marginal $\pi(\nu) \in \mathcal P\po\mathcal M\pf$ admits a positive density wih respect to the Lebesgue measure on $\mathcal M$.
\end{lem}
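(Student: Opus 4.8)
The plan is to deduce both existence/uniqueness of $\Pi(\nu)$ and the regularity of $\pi(\nu)$ from the uniform Doeblin-type estimate \eqref{EqDoeblinPt0} together with the geometric ergodicity of Proposition \ref{PropErgodique}. First, fix $\nu\in\mathcal P(\mathcal M)$. Proposition \ref{PropErgodique} shows that $(m P_t^\nu)_{t\geqslant 0}$ is a Cauchy family in the total variation metric as $t\to\infty$, uniformly in the initial datum $m$; since $(\mathcal P(\EE),d_{TV})$ is complete, the limit exists, does not depend on $m$, and (by the semigroup property and continuity of $m\mapsto m P_s^\nu$ in $d_{TV}$) is the unique invariant probability measure $\Pi(\nu)$. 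Alternatively one may simply invoke that \eqref{EqDoeblinPt0} is a one-step minorization for the discrete-time chain with kernel $P_{t_0}^\nu$, which by the classical Doeblin theorem has a unique invariant law, and then check that invariance for the skeleton implies invariance for the full semigroup because $(P_s^\nu)$ commutes with $P_{t_0}^\nu$ and $s\mapsto \Pi(\nu)P_s^\nu$ is $d_{TV}$-continuous and periodic, hence constant.

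Next, for the density statement: passing to the limit $t\to\infty$ in \eqref{EqDoeblinPt0} (applied with $m P_{t_0}^\nu$ in place of $P_{t_0}^\nu$ acting on the right, or rather writing $\Pi(\nu)=\Pi(\nu)P_{t_0}^\nu \geqslant \kappa\, m_0$ by integrating \eqref{EqDoeblinPt0} against $\Pi(\nu)$), one gets that $\Pi(\nu)$ dominates $\kappa m_0$, where $m_0$ has a density with respect to the product of the uniform measure $\dd u$ on $\mathcal M$ and an isotropic velocity law. In particular the first marginal $\pi(\nu)$ satisfies $\pi(\nu)\geqslant \kappa\,(\text{uniform measure on }\mathcal M)$, so $\pi(\nu)$ is absolutely continuous with respect to the Lebesgue measure on $\mathcal M$ with density bounded below by the constant $\kappa$ (up to normalization of $\dd u$). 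Hence the density is positive. To be careful, I would phrase the domination precisely: for any nonnegative $f$ on $\EE$, $\Pi(\nu)f = \int P_{t_0}^\nu f \,\dd\Pi(\nu) \geqslant \kappa \int f\,\dd m_0$, which is exactly $\Pi(\nu)\geqslant \kappa m_0$ as measures; projecting onto $\mathcal M$ and using that the $\mathcal M$-marginal of $m_0$ is (a multiple of) the uniform law gives the claim.

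The only mild subtlety — and what I expect to be the one point needing a line of justification rather than a citation — is the passage from invariance of the skeleton $P_{t_0}^\nu$ to invariance of the whole family $(P_t^\nu)_{t\geqslant 0}$, and symmetrically that the unique $\Pi(\nu)$ does not depend on the choice of $t_0$. This is handled by the standard argument: if $\Pi$ is $P_{t_0}^\nu$-invariant, then for each $s\geqslant 0$ the measure $\Pi P_s^\nu$ is also $P_{t_0}^\nu$-invariant (the two operators commute), hence equals $\Pi$ by uniqueness for the skeleton; thus $\Pi$ is invariant for every $P_s^\nu$. Conversely any invariant measure for the full semigroup is invariant for $P_{t_0}^\nu$, so uniqueness transfers. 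With this in hand the lemma is complete; no estimate here needs to be uniform in $\nu$, so the proof is genuinely a fixed-$\nu$ statement, the uniform constants from Lemmas \ref{LemP01}–\ref{LemP02} being used only through their existence.
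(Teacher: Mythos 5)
Your argument for existence and uniqueness of $\Pi(\nu)$ is correct and follows the same route as the paper: Proposition \ref{PropErgodique} gives a contraction (or Doeblin minorization) for a skeleton $P_{t_0}^\nu$, the Banach fixed-point theorem (or Doeblin's theorem) yields a unique invariant law for the skeleton, and the standard commutation argument upgrades skeleton-invariance to semigroup-invariance. Nothing to fix there.

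The density claim, however, has a genuine gap. From $\Pi(\nu)\geqslant \kappa m_0$ you conclude that $\pi(\nu)$ \emph{is absolutely continuous} with a density bounded below. But a lower bound by an absolutely continuous measure does not give absolute continuity of the larger measure. In general, $\pi(\nu)=\kappa\,\dd u + \sigma$ where $\sigma$ is the remaining nonnegative measure of mass $1-\kappa$, and nothing so far prevents $\sigma$ from being singular with respect to Lebesgue (for instance, carrying an atom). So \eqref{EqDoeblinPt0} only delivers the \emph{positivity of the density once you already know it exists}; it does not by itself prove existence of the density.

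The paper fills this gap with a separate argument that you would need to reproduce. It passes to the discrete-time embedded chain $\tilde P^\nu$ (constant jump rate $\lambda_{\max}$ with phantom jumps), whose step sends $(x,y)$ to $\varphi_T^{(1)}(x,y)$ with $T$ exponential. The key observation is that a single step of $\tilde P^\nu$, applied to a law whose position marginal is absolutely continuous, again yields an absolutely continuous position marginal. Combined with the Doeblin-type bound $(\tilde P^\nu)^k f\geqslant\kappa_k\,m_0 f$, this gives the mass inequality $p\geqslant p+(1-p)\kappa_k$ for the mass $p$ of the absolutely continuous part of the position marginal of the invariant measure $\tilde m$, forcing $p=1$, and then $m=\tilde m\tilde P^\nu$ also has an absolutely continuous position marginal. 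After that, the minorization $\pi(\nu)\geqslant\kappa\,\dd u$ is exactly what gives positivity — the step you already have. In short: keep your positivity argument, but insert an explicit absolute-continuity argument; the minorization alone is not enough.
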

\begin{proof}
Let $t>0$ be large enough so that, from Proposition \ref{PropErgodique}, $P_t^\nu$ is a contraction of $\mathcal P\po \EE\pf$ (which, endowed with the total variation metric is a Banach space). By the Banach fixed-point  Theorem, there exists a unique $m\in\mathcal P(\EE)$ such that $mP_t^\nu = m$ and, from the semi-group property of $P^\nu$ and the uniqueness of $m$, $mP_s^\nu P_t = m P_s^\nu$ implies that $mP_s^\nu = m$ for all $s\geqslant 0$.


\bigskip

To prove that the first marginal of this invariant measure $m$ admits a density with respect to the Lebesgue measure, following \cite[Proposition 5.9]{BLBMZ3}, we will consider the embedded chain associated to the continuous-time process. More precisely, by writing
\begin{eqnarray*}
L^\nu f &= & D f + \lambda_{\max} \po \widehat H^\nu  f - f\pf 
\end{eqnarray*}
with 
\begin{eqnarray*}
\widehat H^\nu f(x,y) & = & \frac{\lambda^\nu}{\lambda_{\max}} H^\nu f + \po 1 - \frac{\lambda^\nu}{\lambda_{\max}} \pf f,
\end{eqnarray*}
we consider an alternative construction of the process: the jumps occur at constante rate $\lambda_{\max}$, with kernel $\widehat{H}^\nu$. In other words, we add phantom jumps at rate $\lambda_{\max}-\lambda^\nu$, at which nothing happens. Let 
\begin{eqnarray*}
\tilde P^\nu f(x,y) & =& \int_0^\infty \widehat{H}^\nu f\po \varphi_s(x,y)\pf \lambda_{\max} e^{-\lambda_{\max} s} \dd s \ = \ \mathbb E \po f(Z_{T})\pf
\end{eqnarray*}
where $Z$ is a Markov process with generator $L^\nu$ with $Z_0=(x,y)$ and $T$ is its first jump time (with possible phantom jumps). If $(X,Y)$ follows the law $\tilde P^\nu(x,y)$, then $X = \varphi_{T}^{(1)}(x,y)$ where $T$ is an exponential variable with parameter $\lambda_{\max}$. In particular, if a law $n\in\mathcal P(\EE)$ admits a first marginal which admits a density with respect to the Lebesgue measure, so does $n\widetilde P^\nu$.

 Following \cite[Proposition 5.2]{BLBMZ3} (see also \cite[Theorem 34.31, p.123]{Davis}), $\tilde P^\nu$ admits a unique invariant measure $\tilde m$, and $m =  \tilde m\tilde P^\nu$. Hence, it is sufficient to prove that the first marginal of $\tilde m$ admits a density.
 
 Note that, for a positive $f$, under Assumption \ref{HypoUnifnu},
 \begin{eqnarray*}
\tilde P^\nu f(x,y) & \geqslant & c \frac{\lambda_{\min}}{\lambda_{\max}} \int_0^\infty  \int f\po \varphi_s^{(1)}(x,y),r\theta\pf p(\dd r)\dd \theta \lambda_{\max} e^{-\lambda_{\max} s} \dd s,
\end{eqnarray*}
from which it is clear that, for $k$ large enough, there exists $\kappa_k >0$ such that for all $z\in\EE$,
\begin{eqnarray*}
\po \tilde P^\nu \pf^k f(z) & \geqslant & \kappa_k \int f(z) m_0(z).
\end{eqnarray*}

For $n\in \mathcal P\po\EE\pf$, if $\mathcal L aw(X,Y) = n$, denote $n_1 = \mathcal L aw(X)$ and $n_x = \mathcal L aw(Y|X=x)$, so that $n = n_1 n_x$. Decomposing $n_1 = p n_{1,ac} + (1-p) n_{1,\perp}$ with $n_{1,ac}$ (resp. $n_{1,\perp}$) absolutely continuous (resp. singular) with respect to the Lebesgue measure on $\mathcal M$, set $n_{ac} = n_{1,ac} n_x$ and $n_{\perp} = n_{1,\perp} n_x$. We call $p$ the mass of the continuous part of $n_1$, which is uniquely defined. Then,
\[\tilde m_1 \ = \ \int \tilde m \dd y \ = \ \int \tilde m  \po \tilde P^\nu \pf^k  \dd y  \ \geqslant \  \int \po p \tilde m_{ac} \po \tilde P^\nu \pf^k  + (1-p)\kappa m_0 \pf \dd y, \]
which implies that the mass of the continuous part of $\tilde m_1$, which is $p$, is at least $p+(1-p)\kappa$, so that $p=1$.

Finally, the fact that the density of $\pi(\nu)$ is positive is a consequence of the minoration \eqref{EqDoeblinPt0} which gives 
\[\pi(\nu) \ = \ \int \Pi(\nu) \dd y \ = \ \int \Pi(\nu) P^\nu_{t_0} \dd y \ \geqslant \ \kappa \int m_0 \dd y \ = \ \kappa \dd u\]
with $\dd u$ the uniform law on $\mathcal M$.


\end{proof}


Let $K^\nu f = f - \Pi(\nu) f$ denote the orthogonal projection operator  (in $L^2\po \Pi(\nu)\pf$) on the orthogonal of the constant functions. Since
\[ P_t^\nu K^\nu f(z) \ = \ \delta_z P_t^\nu K^\nu f - \Pi(\nu) P_t^\nu K^\nu f,\]
Proposition \ref{PropErgodique} immediatly yields
\begin{lem}\label{LemCVequilibre}
For all $t\geqslant 0 $, $\nu\in \mathcal P\po \mathcal M\pf$ and $f\in L^\infty(\EE)$,
\begin{eqnarray*}
\| P_t^\nu K^\nu f \|_\infty & \leq & C_1 e^{-\rho t}\| K^\nu f \|_\infty.
\end{eqnarray*}
\end{lem}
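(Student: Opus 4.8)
The plan is to derive Lemma \ref{LemCVequilibre} as an immediate consequence of Proposition \ref{PropErgodique} applied to two well-chosen probability measures. The key observation, already recorded in the excerpt just before the statement, is that for any $z \in \EE$ one has the identity
\[
P_t^\nu K^\nu f(z) \ = \ \delta_z P_t^\nu K^\nu f - \Pi(\nu) P_t^\nu K^\nu f,
\]
which holds because $K^\nu f = f - \Pi(\nu)f$ has zero $\Pi(\nu)$-average, so $\Pi(\nu)(K^\nu f)$ is the zero constant and thus $\delta_z P_t^\nu K^\nu f = \delta_z P_t^\nu f - \Pi(\nu)f$, while $\Pi(\nu)P_t^\nu K^\nu f = \Pi(\nu) K^\nu f = 0$ by invariance of $\Pi(\nu)$ under $P_t^\nu$. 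Hence the right-hand side above equals $\delta_z P_t^\nu g - \Pi(\nu) P_t^\nu g$ for $g = K^\nu f$, i.e. the difference of the evaluations of $P_t^\nu g$ against the two measures $\delta_z$ and $\Pi(\nu)$.

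First I would make precise the elementary bound $|\,m_1 P_t^\nu g - m_2 P_t^\nu g\,| \leqslant 2\|g\|_\infty\, d_{TV}(m_1 P_t^\nu, m_2 P_t^\nu)$, valid for any bounded measurable $g$ and any probability measures $m_1, m_2$ (with the convention on $d_{TV}$ used in the paper; if one uses the coupling characterization, $|m_1 P_t^\nu g - m_2 P_t^\nu g| = |\E(g(Z_t^1)) - \E(g(Z_t^2))| \leqslant 2\|g\|_\infty \mathbb P(Z_t^1 \neq Z_t^2)$, and one optimizes over couplings). Applying this with $m_1 = \delta_z$, $m_2 = \Pi(\nu)$ and $g = K^\nu f$, and then invoking Proposition \ref{PropErgodique} together with the trivial bound $d_{TV}(\delta_z, \Pi(\nu)) \leqslant 1$, gives
\[
|P_t^\nu K^\nu f(z)| \ \leqslant \ 2\|K^\nu f\|_\infty\, d_{TV}(\delta_z P_t^\nu, \Pi(\nu)P_t^\nu) \ \leqslant \ 2 C_1 e^{-\rho t}\|K^\nu f\|_\infty.
\]
Taking the supremum over $z \in \EE$ yields the claim (absorbing the harmless factor $2$ into the constant $C_1$, or adjusting constants as the paper's conventions dictate). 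One should just be mildly careful that $K^\nu f \in L^\infty(\EE)$ whenever $f \in L^\infty(\EE)$, which is clear since $\|K^\nu f\|_\infty \leqslant 2\|f\|_\infty$.

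There is essentially no obstacle here: the lemma is purely a packaging of Proposition \ref{PropErgodique}, and the only thing to get right is the bookkeeping of constants and the factor relating $d_{TV}$ to the sup-norm pairing with a bounded function, which depends on whether $d_{TV}$ is normalized so that it lies in $[0,1]$ or in $[0,2]$. Since the excerpt defines $d_{TV}(\nu_1,\nu_2) = \inf\{\mathbb P(V_1 \neq V_2)\}$ over couplings, it lies in $[0,1]$, the pairing bound carries a factor $2$, and this factor is simply swallowed into $C_1$ — the statement is already phrased with the same constant $C_1$ as in Proposition \ref{PropErgodique}, so strictly speaking one should either write $2C_1$ or note that $C_1$ may be enlarged. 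I would state the proof in one or two lines exactly along these lines.
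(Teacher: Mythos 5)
Your proof is correct and follows exactly the same route as the paper: the paper precedes the lemma with the identity $P_t^\nu K^\nu f(z) = \delta_z P_t^\nu K^\nu f - \Pi(\nu) P_t^\nu K^\nu f$ and then says that Proposition~\ref{PropErgodique} "immediately yields" the result, which is precisely the argument you spell out. Your remark about the factor of $2$ (from pairing a bounded function against $m_1 - m_2$ under the coupling normalization of $d_{TV}$) is a correct and reasonable bookkeeping observation; the paper silently absorbs it into $C_1$.
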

In particular, the operator
\begin{eqnarray*}
Q^\nu f & := & -\int_0^\infty P_t^\nu K_\nu f \dd t
\end{eqnarray*}
is well-defined for $f\in L^\infty(\EE )$ and satisfies
\begin{eqnarray}\label{EqBorneQ}
\| Q^\nu f\|_\infty & \leq & \frac{C_1}{\rho} \|   f\|_\infty  .
\end{eqnarray}
When $f$ is in the domain of $L$,  $\partial_t P_t^{\nu} f = P_t^{\nu} L^\nu f = L^\nu P_t^{\nu} f$ for all $t\geq 0$, and
\[ L^\nu Q^\nu f \ = \  Q^\nu L^\nu f \ = \ -\int_{0}^\infty \partial_t P_t^{\nu} K^\nu f \dd t \ =\  K^\nu f.\]
In other words, for all $g \in L^\infty(\EE)$ with $\int g \dd \Pi(\nu) = 0$, the solution of the Poisson equation $L^\nu f = g$ is given by $f = Q^\nu g$.


\subsection{Dependency on $\nu$}

We keep the notations of the previous section. In this subsection, both Assumptions \ref{HypoUnifnu} and \ref{HypoContiNu} are enforced.

\begin{lem}\label{LemPK}
There exist $C>0$ such that for all $t\geq 0$,  $\nu_1,\nu_2\in \mathcal P\po\mathcal M\pf$ and $f\in L^\infty(\EE)$,
\begin{eqnarray*}
\|P_t^{\nu_1} f - P_t^{\nu_2} f\|_\infty & \leqslant &  C t\| f\|_\infty d_{TV}\po\nu_1,\nu_2\pf\\
\|K^{\nu_1} f - K^{\nu_2} f\|_\infty & \leqslant & C\| f\|_\infty  d_{TV}\po\nu_1,\nu_2\pf
\end{eqnarray*}
\end{lem}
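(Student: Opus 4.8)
The natural approach is to control the two differences separately, with the semigroup estimate coming first since the resolvent/projection estimate should follow from it together with the uniform ergodic decay in Proposition \ref{PropErgodique}. For the semigroup bound, I would use a Duhamel (variation of constants) identity: writing $L^{\nu_1} - L^{\nu_2} = D^{\nu_1,\nu_2}$ where, by Assumption \ref{HypoContiNu},
\[
\left| \left( L^{\nu_1} - L^{\nu_2}\right) g(z) \right| \ \leqslant\ \left| \lambda^{\nu_1}(z) - \lambda^{\nu_2}(z)\right| \, \left| H^{\nu_1}g(z) - g(z)\right| + \lambda^{\nu_2}(z) \left| H^{\nu_1}g(z) - H^{\nu_2}g(z)\right| \ \leqslant\ 3C \|g\|_\infty d_{TV}(\nu_1,\nu_2),
\]
(after bounding $\lambda^{\nu_2}$ by $\lambda_{\max}$), so that $\| (L^{\nu_1}-L^{\nu_2}) g\|_\infty \leqslant C' \|g\|_\infty d_{TV}(\nu_1,\nu_2)$ for all bounded $g$. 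Then
\[
P_t^{\nu_1} f - P_t^{\nu_2} f \ = \ \int_0^t P_{t-s}^{\nu_1}\left( L^{\nu_1} - L^{\nu_2}\right) P_s^{\nu_2} f \, \dd s,
\]
and since each $P_{t-s}^{\nu_1}$ is a contraction on $L^\infty$ and $\|P_s^{\nu_2} f\|_\infty \leqslant \|f\|_\infty$, taking sup-norms gives $\|P_t^{\nu_1}f - P_t^{\nu_2}f\|_\infty \leqslant C' t \|f\|_\infty d_{TV}(\nu_1,\nu_2)$, which is the first claim. One technical point to address is justifying the Duhamel formula: the map $s \mapsto P_{t-s}^{\nu_1} P_s^{\nu_2} f$ should be differentiated in $s$, which requires $P_s^{\nu_2} f$ to be in the domain of $L^{\nu_1}$; this is handled by first proving the identity for $f \in \mathcal C^1_b(\TM)$ (a core, using that $H^\nu$, hence $P^\nu$, preserves $\mathcal C^1_b$ in the smooth case and an approximation argument as in the introduction) and then extending to $f \in L^\infty(\EE)$ by density, since both sides are continuous in $f$ for the sup-norm.

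For the second bound, I would split the difference of projections as
\[
K^{\nu_1} f - K^{\nu_2} f \ = \ \left( f - \Pi(\nu_1)f\right) - \left( f - \Pi(\nu_2) f\right) \ = \ \Pi(\nu_2) f - \Pi(\nu_1) f,
\]
so the task reduces to bounding $| \Pi(\nu_1) f - \Pi(\nu_2) f| \leqslant \|f\|_\infty d_{TV}(\Pi(\nu_1),\Pi(\nu_2))$, i.e. to a Lipschitz estimate on $\nu \mapsto \Pi(\nu)$ in total variation. To get this, fix $t$ large enough that $C_1 e^{-\rho t} \leqslant 1/2$ (via Proposition \ref{PropErgodique}); using invariance $\Pi(\nu_i) = \Pi(\nu_i) P_t^{\nu_i}$ and inserting $P_t^{\nu_1}$,
\[
\Pi(\nu_1) - \Pi(\nu_2) \ = \ \Pi(\nu_1) P_t^{\nu_1} - \Pi(\nu_2) P_t^{\nu_2} \ = \ \left( \Pi(\nu_1) - \Pi(\nu_2)\right) P_t^{\nu_1} + \Pi(\nu_2)\left( P_t^{\nu_1} - P_t^{\nu_2}\right),
\]
whence, testing against $f$ with $\|f\|_\infty \leqslant 1$ and using Proposition \ref{PropErgodique} on the first term (with mean-zero test function, after subtracting a constant so the pairing is unchanged) and the first part of the present lemma on the second term,
\[
d_{TV}(\Pi(\nu_1),\Pi(\nu_2)) \ \leqslant\ \tfrac12 d_{TV}(\Pi(\nu_1),\Pi(\nu_2)) + C' t\, d_{TV}(\nu_1,\nu_2),
\]
so $d_{TV}(\Pi(\nu_1),\Pi(\nu_2)) \leqslant 2C' t\, d_{TV}(\nu_1,\nu_2)$, and the second claim follows with $C = 2C' t$.

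The main obstacle is the rigorous justification of the Duhamel identity — specifically, ensuring $P_s^{\nu_2} f$ lands in the domain of $L^{\nu_1}$ so that $s \mapsto P_{t-s}^{\nu_1} P_s^{\nu_2} f$ is differentiable with the expected derivative, and controlling uniformity in $\nu_1,\nu_2$ throughout. Everything else is routine once this is in place: the contraction property of the semigroups on $L^\infty$, the bound on $\|(L^{\nu_1}-L^{\nu_2})g\|_\infty$ from Assumption \ref{HypoContiNu}, and the fixed-point argument for $\Pi$ all go through uniformly. A minor subtlety worth a remark is that in applying Proposition \ref{PropErgodique} to bound $(\Pi(\nu_1)-\Pi(\nu_2))P_t^{\nu_1}f$ one works with the signed measure $\Pi(\nu_1)-\Pi(\nu_2)$, which has total mass zero; writing it as $d_{TV}(\Pi(\nu_1),\Pi(\nu_2))(m_+ - m_-)$ for probability measures $m_\pm$ and applying the proposition to each reduces this to the stated form.
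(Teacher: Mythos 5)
Your proof is correct, but it takes a genuinely different route from the paper on both halves of the lemma. For the first estimate, the paper argues probabilistically: it constructs a Markov coupling $(Z,\widetilde Z)$ on $\EE^2$ whose marginals are the $\nu_1$- and $\nu_2$-processes and which is designed (via an optimal coupling $\mathbf H$ of the jump kernels, plus phantom jumps at constant rate $\lambda_{\max}$) so that at each jump time the two components separate with probability at most $C\,d_{TV}(\nu_1,\nu_2)$; the bound then follows from $|P_t^{\nu_1}f(z)-P_t^{\nu_2}f(z)|\leqslant\|f\|_\infty\,\mathbb P(Z_t\neq\widetilde Z_t)$ and a Poisson-rate computation. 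Your Duhamel identity reaches the same conclusion in a more analytic way; the coupling argument has the virtue of entirely sidestepping the core/domain issues that you rightly flag as the main technical obstacle in the Duhamel approach (one needs $P_s^{\nu_2}f$ to stay in a common core). For the second estimate, the paper exploits the Poisson-equation identity $L^{\nu_2}Q^{\nu_2}=K^{\nu_2}$ together with the invariance $\Pi(\nu_1)L^{\nu_1}=0$ to write $K^{\nu_1}f-K^{\nu_2}f=\pm\,\Pi(\nu_1)\po L^{\nu_2}-L^{\nu_1}\pf Q^{\nu_2}f$, then applies Assumption \ref{HypoContiNu} and the uniform bound \eqref{EqBorneQ} on $Q^\nu$ directly; this is a one-line algebraic shortcut. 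Your route via a contraction argument on $\nu\mapsto\Pi(\nu)$ (decomposing $\Pi(\nu_1)-\Pi(\nu_2)=(\Pi(\nu_1)-\Pi(\nu_2))P_t^{\nu_1}+\Pi(\nu_2)(P_t^{\nu_1}-P_t^{\nu_2})$ and absorbing the first term via Proposition \ref{PropErgodique}) is self-contained and perfectly valid; it does not need $Q^\nu$, at the cost of a slightly longer fixed-point computation. One cosmetic remark: your constant in the bound $\|(L^{\nu_1}-L^{\nu_2})g\|_\infty\leqslant 3C\|g\|_\infty d_{TV}(\nu_1,\nu_2)$ should read $(2+\lambda_{\max})C$, since the second term carries a factor $\lambda_{\max}$ rather than $1$; this does not affect the argument.
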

\begin{proof}
For $z_1,z_2\in \EE^2$, we denote by $\mathbf H (z_1,z_2)$ the law of an optimal coupling of $H^{\nu_1}(z_1)$ and $H^{\nu_2}(z_2)$, which means that, if $(Z^1,Z^2)\sim \mathbf H(z_1,z_2)$, then $Z^i \sim H^{\nu_i}(z_i)$ for $i=1,2$ and  $\mathbb P \po Z^1 \neq Z^2 \pf = d_{TV}\po H^{\nu_1}(z_1),H^{\nu_2}(z_2)\pf$. We denote 
\[\po H^{\nu_1}\otimes I\pf f((x_1,y_1),z_2) = \int f\po(x_1,v),z_2\pf h^{\nu_1}(x_1,y_1,\dd v),\]
 similarly for $I\otimes H^{\nu_2}$, and
 \begin{eqnarray*}
 (D\otimes D ) f(z_1,z_2) & =& \underset{t\rightarrow 0}\lim \frac{f\po \varphi_t(z_1),\varphi_t(z_2)\pf}{t}.
 \end{eqnarray*}

For $z\in \EE$, let $\po Z_t,\widetilde {Z_t}\pf_{t\geqslant0}$ be the Markov process on $\EE^2$ starting at $\po Z_0,\widetilde{Z_0}\pf=(z,z)$ and with generator 
\begin{eqnarray*}
\mathbf L f  &=& (D\otimes D) f   +   \po \lambda^{\nu_1}  \wedge \lambda^{\nu_2}\pf \po \mathbf H f - f \pf \\ 
& & +   \po \lambda^{\nu_1}  -  \po \lambda^{\nu_1}  \wedge \lambda^{\nu_2}\pf\pf_+   \po \po H^{\nu_1}\otimes I\pf f - f \pf\\
& & +   \po \lambda^{\nu_2} -  \po \lambda^{\nu_1}  \wedge \lambda^{\nu_2}\pf\pf_+    \po \po I\otimes H^{\nu_2}\pf f- f \pf.
\end{eqnarray*}
As can be seen by checking that $\mathbf L f = L^{\nu_i} f$ whenever $f$ depends only on $z_i$, $i=1$ or $2$, the first marginal $Z$ (resp. the second marginal $\widetilde Z$) is a Markov process associated to $L^{\nu_1}$ (resp. $L^{\nu_2}$), so that
\begin{eqnarray*}
| P_t^{\nu_1} f(z) - P_t^{\nu_2} f(z)| & = & \left|\mathbb{E}\po f\po Z_t\pf - f\po \widetilde{Z_t}\pf\pf\right|\\
&\leqslant & \|f\|_\infty \mathbb P\po  Z_t \neq \widetilde {Z_t}\pf .
\end{eqnarray*}
The dynamic given by $\mathbf L $ ensures that, as much as possible, both processes jump simultaneously and that, when they do so, the probability that they take the same velocity is as high as possible. For the sake of simplicity, by adding phantom jumps as in the proof of Lemma \ref{LemMesureInvariante}, we write
\begin{eqnarray*}
\mathbf L f  &=& (D\otimes D) f   +   \lambda_{\max} \po \widehat{\mathbf H} f - f \pf.  
\end{eqnarray*}
Assumption \ref{HypoContiNu} implies that there exists $C>0$ such that, for all $z\in\EE$, if $(Z^1,Z^2)\sim \widehat{\mathbf H}(z,z)$, then $\mathbb P\po Z^1 \neq Z^2 \pf \leqslant C d_{TV}(\nu_1,\nu_2)$.

At time $t=0$, $Z_t  = \widetilde Z_t$, and this remains true at least up to the fist (possibly phantom) jump time $T_1$, which is an exponential r.v. with parameter $\lambda_{\max}$. At time $T_1$, independently from $T_1$, there is a probability at least $1-C d_{TV}(\nu_1,\nu_2)$ that $Z_{T_1} = \widetilde Z_{T_1}$, and thus the processes stay equal up to the next jump time $T_2$, and so on. Conditionnaly to $K_t$ the number of jumps between times 0 and $t$, the probability that the processes haven't split yet at time $t$ is greater than $\po 1 -C d_{TV}(\nu_1,\nu_2)\pf_+^{K_t}$. Since $K_t$ follows a Poisson law with parameter $\lambda_{\max} t$,
\begin{eqnarray*}
\mathbb P\po  Z_t \neq \widetilde {Z_t}\pf & \leqslant & 1 - \E\po \po 1 -C d_{TV}(\nu_1,\nu_2)\pf_+^{K_t}\pf \  \leqslant \ 1 - e^{ - C d_{TV}(\nu_1,\nu_2) \lambda_{\max} t},
\end{eqnarray*}
which concludes the proof of the first assertion.

As far as the second one is concerned, write
\begin{eqnarray*}
K^{\nu_1} f(z) - K^{\nu_2} f(z)
 & = & \Pi(\nu_1)K^{\nu_2} f\\
  & = & \Pi(\nu_1)L^{\nu_2} Q^{\nu_2}f\\
  & = & \Pi(\nu_1)\po L^{\nu_2}-L^{\nu_1}\pf Q^{\nu_2}f,
\end{eqnarray*}
where we used that, $\Pi(\nu_1)$ being invariant for $L^{\nu_1}$, $\Pi(\nu_1)L^{\nu_1} = 0$. From Assumption \ref{HypoContiNu} and the bound \eqref{EqBorneQ}, there exists $C>0$ such that
\[|\po L^{\nu_2}-L^{\nu_1}\pf Q^{\nu_2}f| \ \leqslant\ C\| Q^{\nu_2} f\|_\infty  d_{TV}\po\nu_1,\nu_2\pf \ \leqslant \ \rho^{-1} C_1 C\|   f\|_\infty  d_{TV}\po\nu_1,\nu_2\pf.\]

\end{proof}

\begin{lem}\label{LemQ}
There exists $C_2>0$ such that for all    $\nu_1,\nu_2\in \mathcal P\po\mathcal M\pf$ and $f\in L^\infty(\EE)$,
\begin{eqnarray*}
\| Q^{\nu_1} f  - Q^{\nu_2} f \|_\infty & \leqslant &   C_2\| f\|_\infty    d_{TV}\po\nu_1,\nu_2\pf.
\end{eqnarray*}
\end{lem}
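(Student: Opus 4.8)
The plan is to mimic the operator identity $A^{-1}-B^{-1}=A^{-1}(B-A)B^{-1}$, using that $Q^{\nu}$ is (minus) the pseudo-inverse of $L^{\nu}$ on the subspace of $\Pi(\nu)$-centred functions. Fix $\nu_1,\nu_2\in\mathcal P\po\mathcal M\pf$ and $f\in L^\infty\po\EE\pf$, and set $h=Q^{\nu_2}f$. First I would record the elementary facts that $Q^{\nu}$ annihilates constants, that $\Pi(\nu)Q^{\nu}=0$, and that (by the Poisson-equation characterisation recalled above, applied to the bounded mean-zero function $K^{\nu_2}f$) $h$ lies in the domain of $L^{\nu_2}$ with $L^{\nu_2}h=K^{\nu_2}f$; together with \eqref{EqBorneQ} this gives $\Pi(\nu_2)h=0$ and $\|h\|_\infty\leqslant\frac{C_1}{\rho}\|f\|_\infty$. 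Then I write $L^{\nu_1}h=K^{\nu_2}f+\po L^{\nu_1}-L^{\nu_2}\pf h$, apply $Q^{\nu_1}$, and use $Q^{\nu_1}L^{\nu_1}h=K^{\nu_1}h$ (valid since $h$ is also in the domain of $L^{\nu_1}$, see below) together with $Q^{\nu_1}K^{\nu_2}f=Q^{\nu_1}f$ (as $Q^{\nu_1}$ kills the constant $\Pi(\nu_2)f$). Since $K^{\nu_1}h=h-\Pi(\nu_1)h=Q^{\nu_2}f-\po\Pi(\nu_1)-\Pi(\nu_2)\pf Q^{\nu_2}f$, this rearranges into
\[
Q^{\nu_1}f-Q^{\nu_2}f \;=\; -\po\Pi(\nu_1)-\Pi(\nu_2)\pf Q^{\nu_2}f \;-\; Q^{\nu_1}\po L^{\nu_1}-L^{\nu_2}\pf Q^{\nu_2}f .
\]

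It then remains to estimate the two terms with tools already available. The key point for the second term is that the free-transport parts of $L^{\nu_1}$ and $L^{\nu_2}$ coincide, so $\po L^{\nu_1}-L^{\nu_2}\pf g=\po\lambda^{\nu_1}-\lambda^{\nu_2}\pf\po H^{\nu_1}g-g\pf+\lambda^{\nu_2}\po H^{\nu_1}g-H^{\nu_2}g\pf$ is a \emph{bounded} operator, and Assumption~\ref{HypoContiNu} (with $\lambda^{\nu}\leqslant\lambda_{\max}$ and $\|H^{\nu}\|\leqslant1$) yields $\|\po L^{\nu_1}-L^{\nu_2}\pf g\|_\infty\leqslant\po2+\lambda_{\max}\pf C\,\|g\|_\infty\,d_{TV}\po\nu_1,\nu_2\pf$; applying \eqref{EqBorneQ} once for $h=Q^{\nu_2}f$ and once for $Q^{\nu_1}$ bounds this term by a constant times $\|f\|_\infty d_{TV}\po\nu_1,\nu_2\pf$. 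For the first term, $\po\Pi(\nu_1)-\Pi(\nu_2)\pf Q^{\nu_2}f$ is a constant function whose modulus equals $\|K^{\nu_1}(Q^{\nu_2}f)-K^{\nu_2}(Q^{\nu_2}f)\|_\infty$, which the second estimate of Lemma~\ref{LemPK} controls by $C\|Q^{\nu_2}f\|_\infty d_{TV}\po\nu_1,\nu_2\pf\leqslant\frac{C_1C}{\rho}\|f\|_\infty d_{TV}\po\nu_1,\nu_2\pf$ via \eqref{EqBorneQ}. Summing the two contributions gives the claim, with $C_2$ explicit in terms of $C,C_1,\rho,\lambda_{\max}$.

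The only genuinely delicate point, and the one I expect to be the main obstacle, is the justification that $h=Q^{\nu_2}f$ belongs to the domain of $L^{\nu_1}$ (needed both to make sense of $L^{\nu_1}h$ as a bounded function and to apply $Q^{\nu_1}L^{\nu_1}=K^{\nu_1}$ to it). I would dispatch it by observing that, for every $\nu$, the jump part $g\mapsto\lambda^{\nu}\po H^{\nu}g-g\pf$ is a bounded operator on $L^\infty\po\EE\pf$ (Assumption~\ref{HypoUnifnu}), so $L^{\nu}$ is a bounded perturbation of the $\nu$-independent transport generator $D$ and therefore has domain equal to that of $D$, regardless of $\nu$; since the Poisson characterisation already places $Q^{\nu_2}f$ in the domain of $L^{\nu_2}$, it automatically lies in that of $L^{\nu_1}$, and all the manipulations are licit. (If one prefers to avoid invoking the Poisson characterisation for merely bounded $f$, one may instead prove the estimate first for continuous $f$, for which $Q^{\nu_2}f$ is readily seen to lie in the core $\mathcal C^1_b\po\TM\pf$, and then extend to $f\in L^\infty\po\EE\pf$ by the same density argument used for the Poisson equation.) Beyond this bookkeeping, the argument is purely algebraic once Lemma~\ref{LemPK} and the bound \eqref{EqBorneQ} are granted.
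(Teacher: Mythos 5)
Your proof is correct, but it follows a genuinely different route from the paper's. The paper proves Lemma~\ref{LemQ} directly from the integral representation $Q^{\nu}f = -\int_0^\infty P_s^{\nu}K^{\nu}f\,\dd s$: it splits the integral at a finite time $t$, bounds the finite part by Lemma~\ref{LemPK}, rewrites the tail $\int_t^\infty$ via the semigroup property so that $P_t^{\nu_1}K^{\nu_1}(Q^{\nu_1}-Q^{\nu_2})f$ reappears, applies Lemma~\ref{LemCVequilibre} with $t$ large enough that $C_1e^{-\rho t}<\tfrac12$, and closes the argument with the self-bounding inequality $\|Q^{\nu_1}f-Q^{\nu_2}f\|_\infty \leqslant \tfrac12\|Q^{\nu_1}f-Q^{\nu_2}f\|_\infty + C\|f\|_\infty\,d_{TV}(\nu_1,\nu_2)$. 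Your approach instead exploits the pseudo-inverse identity $Q^{\nu}L^{\nu}=K^{\nu}$, together with the fact that $L^{\nu_1}-L^{\nu_2}$ is a bounded operator (the free transport part cancels, and Assumption~\ref{HypoContiNu} controls the jump part), to derive the closed-form resolvent-type identity
\[
Q^{\nu_1}f-Q^{\nu_2}f \;=\; -\po\Pi(\nu_1)-\Pi(\nu_2)\pf Q^{\nu_2}f \;-\; Q^{\nu_1}\po L^{\nu_1}-L^{\nu_2}\pf Q^{\nu_2}f,
\]
after which the estimate is immediate from \eqref{EqBorneQ} and the second part of Lemma~\ref{LemPK}. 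Your version is algebraically cleaner, gives an explicit formula rather than a fixed-point inequality, and makes transparent exactly where each hypothesis enters; its cost is that it leans on operator-domain bookkeeping (making sense of $L^{\nu_1}Q^{\nu_2}f$ and of $Q^{\nu_1}L^{\nu_1}=K^{\nu_1}$ on that element), which the paper's purely semigroup-level argument sidesteps entirely — and indeed the paper is somewhat informal about when the Poisson identity $L^\nu Q^\nu g = K^\nu g$ holds for merely bounded $g$, so you were right to flag that point and offer the bounded-perturbation / density workaround. Both proofs are valid and yield $C_2$ explicit in the same constants.
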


\begin{proof}
Using that 
\[\int_{t}^\infty P_s^{\nu} K^{\nu} f \dd s \ =\ \int_{0}^\infty P_{t+s}^{\nu} K^{\nu} f \dd s  \ =\  P_t^\nu K^{\nu} Q^\nu f,\]
we decompose, for any $t>0$,
\begin{eqnarray*}
Q^{\nu_2} f  - Q^{\nu_1} f &=& \int_0^t \co \po P_s^{\nu_1} - P_s^{\nu_2} \pf K^{\nu_1} f  + P_s^{\nu_2}\po K^{\nu_1}-K^{\nu_2}\pf \cf f \dd s  +\ P_t^{\nu_1} K^{\nu_1}\po Q^{\nu_1}    - Q^{\nu_2} \pf f\\\
& &  +\ \po P_t^{\nu_1} -P_t^{\nu_2}\pf K^{\nu_1}  Q^{\nu_2}   f  \ +\  P_t^{\nu_2} \po  K^{\nu_1}    - K^{\nu_2} \pf Q^{\nu_2}  f.
\end{eqnarray*}
Lemmas \ref{LemCVequilibre} (with $t$ large enough so that $C_1 e^{-\rho t} < \frac12$) and \ref{LemPK} thus yield, for some $C$,
\begin{eqnarray*}
\| Q^{\nu_1} f  - Q^{\nu_2} f\|_\infty  &\leqslant &  \frac12 \| Q^{\nu_1} f  - Q^{\nu_2} f\|_\infty  + C \| f\|_\infty    d_{TV}\po\nu_1,\nu_2\pf,
\end{eqnarray*}
which concludes.
\end{proof}

\section{The limiting flow}\label{SectionODE}

\subsection{Asymptotic pseudotrajectory}\label{SectionODE1}

Let $(f_k)_{k\in\mathbb N}$ be a sequence of $\mathcal C^\infty$ functions on $\mathcal M$ which is dense in the unitary ball of $\mathcal C^0\po\mathcal M\pf$ (endowed with the uniform metric) and for $\nu_1,\nu_2\in \mathcal P\po\mathcal M\pf$ let
\begin{eqnarray*}
d_w\po \nu_1,\nu_2\pf &=& \sum_{k\in \mathbb N}\frac{1}{2^k}|\nu_1 f-\nu_2f|,
\end{eqnarray*}
which is a metric that induces the weak topology on $\mathcal P\po\mathcal M\pf$. Then a continuous function $\xi$ from $\R_+$ to $\mathcal P\po\mathcal M\pf$ is called (see \cite{BenaimHirsch}) an asymptotic pseudotrajectory for the flow $\Psi$ if  for all $T$,
\begin{eqnarray*}
\underset{0\leqslant h \leqslant T}\sup \ d_w \po \xi(t+h),\Psi_h\po \xi(t)\pf\pf  & \underset{t\rightarrow \infty}\longrightarrow & 0.
\end{eqnarray*}
\begin{prop}\label{PropPseudoTrajectoire}
Let $(Z_t,\mu_t)_{t\geq 0}$ be a SIVJP and  $\zeta_t := \mu_{e^t}$. Under Assumptions \ref{HypoUnifnu} and \ref{HypoContiNu}, $\zeta$ is an asymptotic pseudotrajectory for   $\Psi$.
\end{prop}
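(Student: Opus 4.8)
The plan is to follow the standard strategy of Benaïm–Hirsch / Benaïm–Raimond: show that $\zeta_t = \mu_{e^t}$ is, up to a vanishing error, a solution of the ODE $\dot\zeta = F(\zeta)$, and invoke the characterization of asymptotic pseudotrajectories as "perturbed solutions" of the flow. Concretely, for a fixed test function $f$ in the dense family $(f_k)$, I would compute $\frac{d}{dt}\langle f,\mu_t\rangle = \frac{1}{r+t}\big(f(X_t) - \langle f,\mu_t\rangle\big)$ directly from the definition of $\mu_t$, so that after the time change $t\mapsto e^t$ one gets
\[
\frac{d}{dt}\langle f,\zeta_t\rangle \;=\; \frac{e^t}{r+e^t}\Big(f(X_{e^t}) - \langle f,\zeta_t\rangle\Big).
\]
Since $\langle f, F(\zeta_t)\rangle = \langle f,\pi(\zeta_t)\rangle - \langle f,\zeta_t\rangle$ and $e^t/(r+e^t)\to 1$, the claim reduces to showing that, in a time-averaged sense on windows $[t,t+T]$, the instantaneous value $f(X_{e^s})$ can be replaced by $\langle f,\pi(\zeta_s)\rangle = \Pi(\zeta_s)(Pf)$ where $\widetilde f := f$ viewed on $\EE$ depends only on the position.

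The key step is the ergodic replacement. Introduce the Poisson solution $g_\nu := Q^\nu K^\nu \widetilde f$, which by \eqref{EqBorneQ} is bounded uniformly in $\nu$ and satisfies $L^\nu g_\nu = K^\nu \widetilde f = \widetilde f - \Pi(\nu)\widetilde f$. Apply the SIVJP martingale property to $g_{\mu_s}(Z_s)$: writing out $M_t^{g}$ is not immediate because the function $g_{\mu_s}$ itself depends on $s$ through $\mu_s$, so one must use the martingale problem for the joint PDMP $(Z,\mu)$, or more simply decompose $g_{\mu_t}(Z_t) - g_{\mu_0}(Z_0) = \int_0^t L^{\mu_s}g_{\mu_s}(Z_s)\,ds + (\text{martingale}) + (\text{drift from the }\mu\text{-dependence of }g)$. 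The $\mu$-dependence error is controlled by Lemma \ref{LemQ}: $\|g_{\mu_s} - g_{\mu_u}\|_\infty \le C_2 \|\widetilde f\|_\infty d_{TV}(\mu_s,\mu_u)$, and since $d_{TV}(\mu_s,\mu_u) \le 2|s-u|/(r+s\wedge u)$ by the explicit form of $\mu$, the accumulated error over a fixed-length window after the time change is $O(1)$ times the window length but with a prefactor that is $O(1/t)$ — hence negligible after averaging. Rearranging $L^{\mu_s}g_{\mu_s}(Z_s) = \widetilde f(Z_s) - \Pi(\mu_s)\widetilde f = f(X_s) - \langle f,\pi(\mu_s)\rangle$ and integrating, one gets
\[
\int_0^t \big(f(X_s) - \langle f,\pi(\mu_s)\rangle\big)\,ds \;=\; g_{\mu_t}(Z_t) - g_{\mu_0}(Z_0) - M_t^{g} + (\text{small}),
\]
where the boundary terms are $O(1)$ (uniform bound on $g$) and the martingale $M_t^g$ has bracket $O(t)$ by the carré du champ formula, so $M_t^g = o(t)$ a.s. by the law of large numbers for martingales. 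After the logarithmic time change this yields $\int_t^{t+h}\big(f(X_{e^s}) - \langle f,\pi(\zeta_s)\rangle\big)\,ds \to 0$ uniformly in $h\in[0,T]$.

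Combining: $\langle f,\zeta_{t+h}\rangle - \langle f,\zeta_t\rangle = \int_t^{t+h}\langle f,F(\zeta_s)\rangle\,ds + \varepsilon(t,h)$ with $\sup_{h\le T}|\varepsilon(t,h)|\to 0$; since $F$ is Lipschitz for $d_{TV}$ (Lemma \ref{LemPK}, giving the flow $\Psi$) and the finitely many leading $f_k$ suffice to control $d_w$ up to an arbitrarily small tail, a Grönwall comparison between $\zeta$ and $\Psi_\cdot(\zeta(t))$ closes the estimate $\sup_{0\le h\le T} d_w(\zeta(t+h),\Psi_h(\zeta(t)))\to 0$. The main obstacle is the first one: making rigorous the martingale decomposition for the \emph{non-autonomous} functional $g_{\mu_s}(Z_s)$ — i.e. correctly identifying and bounding the extra drift coming from $s\mapsto\mu_s$ — and checking that the resulting error, once divided by the $t$ coming from the time change, genuinely vanishes; everything else (ergodic bound, martingale LLN, Grönwall) is routine given Lemmas \ref{LemCVequilibre}–\ref{LemQ}.
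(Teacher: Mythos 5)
Your proposal takes essentially the same route as the paper, which establishes the pseudotrajectory property by reducing to the conditional Doob estimate of Proposition \ref{PropEpsi}: rewrite $\varepsilon_t(s)f$ via the Poisson solution $Q^{\mu_u}f$, control the $u$-dependence through Lemma \ref{LemQ}, bound the martingale, and invoke \cite[Theorem 3.6, parts (i)(b) and (ii)]{BenaimLedouxRaimond} for the conversion to a pseudotrajectory. The only presentational difference is that the paper absorbs the $1/u$ weight from the time change directly into the corrector $F_u(z) := \frac{1}{u}Q^{\mu_u}f(z)$, which makes the martingale bracket uniformly bounded (of order $\frac1s-\frac1t$) and yields the explicit rate $\mathbb P(\sup_{s\leq T}|\varepsilon_t(s)f|\geq\delta \mid \mathcal F_{e^t}) \leq C_3 e^{-t}\|f\|_\infty^2/\delta^2$ directly from Doob's $L^2$ inequality; your unweighted version gives a bracket of order $t$ and would require an extra Kronecker or integration-by-parts step plus a qualitative martingale SLLN, which reaches the same conclusion but without the exponential rate the cited black-box theorem is built around.
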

\begin{proof}
Following the proof of \cite[Theorem 3.6, parts (i)(b) and (ii)]{BenaimLedouxRaimond}, Proposition \ref{PropPseudoTrajectoire} ensues from Proposition \ref{PropEpsi} below.
\end{proof}

Let us first remark Theorem \ref{ThmFix} is deduced from this result:
 
\begin{proof}[Proof of Theorem \ref{ThmFix}]
The fact that Proposition \ref{PropPseudoTrajectoire} implies Theorem \ref{ThmFix} is proved in \cite[Section 4]{BenaimRaimond}. More precisely, according to \cite[Theorem 3.7]{BenaimLedouxRaimond}, the limit set of an asymptotic pseudotrajectory has the property to be \emph{attractor free} (see \cite[Section 3.3]{BenaimLedouxRaimond} for the definition), and the proof of \cite[Theorem 2.4]{BenaimRaimond} (which is exactly Theorem \ref{ThmFix}) only relies on this property and on the flow $\Psi$, the latter being exactly the same in our case than in the work of Bena\"im and Raimond.
\end{proof}

Consider a SIVJP $(Z_t,\mu_t)_{t\geq 0}=(X_t,Y_t,\mu_t)_{t\geq 0}$ and let $\zeta_t = \mu_{e^t}$. Set 
\begin{eqnarray*}
\varepsilon_t(s) & = & \int_t^{t+s} \po \delta_{X_{(e^u)}} - \pi\po \zeta_u\pf \pf \dd u \hspace{5pt}=\hspace{5pt} \int_{e^t}^{e^{t+s}} \frac{\delta_{X_{u}} - \pi\po \mu_u\pf}{u}  \dd u,
\end{eqnarray*}
which is a signed measure on $\mathcal M$. 
\begin{prop} \label{PropEpsi}
Under Assumptions \ref{HypoUnifnu} and \ref{HypoContiNu}, there exists a constant $C_3$ such that for all $f\in C^{\infty}\po \mathcal M\pf$ and $T,t,\delta >0$,
\begin{eqnarray}\label{EqPropEpsi}
\mathbb P\po\left.  \underset{0\leq s\leq T}\sup |\varepsilon_t(s) f|\geqslant \delta \ \right|\ \mathcal F_{e^t} \pf  & \leqslant &  \frac{C_3 e^{-t}}{\delta^2} \| f\|_\infty^2.
\end{eqnarray} 
\end{prop}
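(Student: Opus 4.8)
The plan is to control the supremum over $s\in[0,T]$ of $|\varepsilon_t(s)f|$ by a martingale argument based on the Poisson equation, exactly as in \cite[Proposition 4.2]{BenaimLedouxRaimond} but with the Markovian ingredients replaced by those of Section \ref{SectionMarkov}. First I would rewrite $\varepsilon_t(s)f$ in the original time scale: with $u=e^t$, we have $\varepsilon_t(s)f=\int_{e^t}^{e^{t+s}} \frac{f(X_r)-\pi(\mu_r)f}{r}\,\dd r$. The natural move is to introduce, for each frozen $\nu$, the solution $g^\nu := Q^\nu(f-\pi(\nu)f)$ of the Poisson equation $L^\nu g^\nu = f-\Pi(\nu)f$ (note $\pi(\nu)f = \Pi(\nu)f$ since $f$ depends only on position), which is bounded by $\tfrac{C_1}{\rho}\|f\|_\infty$ by \eqref{EqBorneQ}. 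Then, along the SIVJP, the defining martingale property applied to $g^{\mu_r}$ (together with the fact that $\partial_r \mu_r$ is of order $1/r$) gives a decomposition
\begin{eqnarray*}
\int_{e^t}^{e^{t+s}} \frac{f(X_r)-\pi(\mu_r)f}{r}\,\dd r &=& \int_{e^t}^{e^{t+s}} \frac{L^{\mu_r} g^{\mu_r}(Z_r)}{r}\,\dd r
\end{eqnarray*}
which, via $\dd g^{\mu_r}(Z_r) = L^{\mu_r}g^{\mu_r}(Z_r)\dd r + \dd M_r + (\text{correction from }\dd_r g^{\mu_r})$, equals a boundary term $\frac{g^{\mu_{e^{t+s}}}(Z_{e^{t+s}})}{e^{t+s}} - \frac{g^{\mu_{e^t}}(Z_{e^t})}{e^t}$, plus a stochastic integral $\int \frac{1}{r}\dd M_r$ against the martingale $M$ of the process, plus two "drift-type" remainder integrals: one coming from $\partial_r(1/r) = -1/r^2$ and one coming from the variation $r\mapsto g^{\mu_r}$ of the test function itself.

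The boundary terms are each $O(e^{-t}\|f\|_\infty)$ by \eqref{EqBorneQ}, uniformly in $s$; the $-1/r^2$ remainder integrates to $O(e^{-t}\|f\|_\infty)$ as well. The remainder from the variation of $g^{\mu_r}$ is where Lemma \ref{LemQ} enters: since $\|g^{\nu_1}-g^{\nu_2}\|_\infty \leqslant C_2\|f\|_\infty d_{TV}(\nu_1,\nu_2)$ and $d_{TV}(\mu_{r+h},\mu_r)\leqslant h/r$ (the occupation measure moves slowly), one gets $\|g^{\mu_{r+\dd r}}-g^{\mu_r}\|_\infty \lesssim \|f\|_\infty\, \dd r / r$, so this term is bounded by $C\|f\|_\infty\int_{e^t}^\infty r^{-2}\dd r = O(e^{-t}\|f\|_\infty)$, again uniformly in $s$. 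Finally, for the stochastic integral $N_s := \int_{e^t}^{e^{t+s}} r^{-1}\dd M_r$, I would use that it is an $\mathcal F$-martingale in $s$ (given $\mathcal F_{e^t}$) with predictable bracket
\begin{eqnarray*}
\langle N\rangle_s &=& \int_{e^t}^{e^{t+s}} \frac{2\,\Gamma^{\mu_r}(g^{\mu_r})(Z_r)}{r^2}\,\dd r \ \leqslant\ \frac{2\lambda_{\max}(2\|g\|_\infty)^2}{2}\int_{e^t}^\infty \frac{\dd r}{r^2}\ \leqslant\ C\,e^{-t}\|f\|_\infty^2,
\end{eqnarray*}
using the carr\'e du champ formula from the Remark after the Definition and the uniform bounds on $\lambda^\nu$ and on $\|g^{\mu_r}\|_\infty$. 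Then Doob's $L^2$ maximal inequality gives $\mathbb P(\sup_{0\le s\le T}|N_s|\ge \delta/2\mid \mathcal F_{e^t}) \leqslant 4\delta^{-2}\mathbb E[\langle N\rangle_T\mid\mathcal F_{e^t}] \leqslant C\delta^{-2}e^{-t}\|f\|_\infty^2$, and collecting the deterministic $O(e^{-t})$ bounds into the same estimate (absorbing them for $t$ large, or simply enlarging $C_3$) yields \eqref{EqPropEpsi}.

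The main obstacle, and the point requiring the most care, is making the decomposition via $g^{\mu_r}(Z_r)$ rigorous: $g^{\mu_r}$ must lie in (or be approximated within) the core $\mathcal C^1_b(\TM)$ on which the SIVJP martingale property is stated, and one must justify differentiating $r\mapsto g^{\mu_r}(Z_r)$ as a sum of the generator term, the martingale increment, and the $\partial_r g^{\mu_r}$ term — i.e. an It\^o-type / integration-by-parts formula for the time-inhomogeneous PDMP $(Z,\mu)$ applied to a time-dependent function. Since $\nu\mapsto g^\nu$ is only known to be Lipschitz for the uniform norm (not obviously differentiable), the cleanest route is to avoid literally differentiating in $r$ and instead work with finite Riemann-sum increments over a partition of $[e^t,e^{t+s}]$, writing $g^{\mu_{r_{i+1}}}(Z_{r_{i+1}}) - g^{\mu_{r_i}}(Z_{r_i}) = [g^{\mu_{r_i}}(Z_{r_{i+1}}) - g^{\mu_{r_i}}(Z_{r_i})] + [g^{\mu_{r_{i+1}}} - g^{\mu_{r_i}}](Z_{r_{i+1}})$, handling the first bracket with the frozen-$\nu$ martingale $M^{g^{\mu_{r_i}}}$ and the second with Lemma \ref{LemQ}, then passing to the limit. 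This is precisely the step carried out in \cite[Proof of Proposition 4.2]{BenaimLedouxRaimond} and \cite[Lemma 3.5]{BenaimLedouxRaimond}, so I would follow that template, the only genuinely new inputs being the PDMP carr\'e du champ formula and the ergodic/regularity Lemmas \ref{LemCVequilibre}, \ref{LemQ} established above.
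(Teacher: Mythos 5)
Your proposal is correct and follows essentially the same route as the paper: decompose $\varepsilon_t(s)f$ via the Poisson solution $Q^{\mu_u}f$, bound the boundary and drift terms by $O(e^{-t}\|f\|_\infty)$ using \eqref{EqBorneQ} and Lemma~\ref{LemQ}, and control the martingale part via the carr\'e du champ and Doob's maximal inequality. The only (cosmetic) difference is that the paper bundles the $1/u$ prefactor into the time-dependent function $F_u := \frac1u Q^{\mu_u}f$ before applying the It\^o-type decomposition, whereas you keep $Q^{\mu_r}f$ and $1/r$ separate and integrate by parts on the $1/r$ factor — the resulting boundary, drift and martingale terms match one to one, and the regularity/core concern you flag is real but is disposed of in the paper by a bare assertion that $z\mapsto F_t(z)$ is smooth.
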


\begin{proof}
Let $f\in\mathcal C^\infty \po\mathcal M\pf$, which we abusively amalgamate as a function on $\EE$ by $f(x,y) := f(x)$ so that, using the notations of Section \ref{SectionMarkov}, we get
\[\varepsilon_t(s) f = \int_{e^t}^{e^{t+s}} \frac{K^{\mu_u} f(Z_u)}{u} \dd u = -\int_{e^t}^{e^{t+s}} \frac{L^{\mu_u} Q^{\mu_u}  f(Z_u)}{u} \dd u .\]
Let $F_t(z) = \frac1t Q^{\mu_t} f(z)$, and note that $z\mapsto F_t(z)$ is $\mathcal C^\infty$. On the other hand, $1<t\mapsto F_t(z)$ is Lipschitz: indeed, from Lemma \ref{LemQ},
\begin{eqnarray*}
| Q^{\mu_{t+s}} f - Q^{\mu_t} f| & \leqslant & C_2\| f\|_\infty d_{TV}\po \mu_{t+s}\  ,\ {\mu_t}\pf\\
& \leqslant & C_2\| f\|_\infty \frac{s}{r+t+s}
\end{eqnarray*}
where we used that $\mu_{t+s} = \frac{r+t}{r+t+s} \mu_t + \frac{s}{r+t+s}\po\frac1s\int_{t}^{t+s} \delta_{X_u} \dd u\pf$. Together with \eqref{EqBorneQ}, that means $t\mapsto F_t(z)$ is almost everywhere differentiable with 
\begin{eqnarray*}
\left| \partial_t F_t(z)\right| & \leqslant & \frac{C_2 + \frac{C_1}{\rho}}{t^2}\| f\|_\infty.
\end{eqnarray*}
 Itô's formula reads
 \begin{eqnarray*}
\int_{s}^{t} L^{\mu_u} F_u(Z_u) \dd u 
  &=& F_t(Z_t)-F_s(Z_s) - \po M_t - M_s + \int_s^t \po \partial_u F_u\pf (Z_u)\dd u\pf,
 \end{eqnarray*}
 where $M_t-M_s$ is a martingale with  quadratic variation 
 \begin{eqnarray*}
\int_s^t \Gamma^{\mu_u}\po F_u\pf(Z_u)\dd u & \leqslant &  \lambda_{\max}\po \frac{C_1}{\rho}\| f\|_\infty\pf^2\po \frac1s-\frac1t\pf.
 \end{eqnarray*}
 From Doob's inequality, it means that  for any $T,\delta,t>0$,
\begin{eqnarray*}
\mathbb P\po \left. \underset{0\leq s \leq T}\sup  \left|M_{e^{t+s}}-M_{e^t}\right| \geqslant \delta \right|\ \mathcal F_{e^t}\pf &\leqslant & \frac{ \lambda_{\max}\po \frac{C_1}{\rho}\| f\|_\infty\pf^2}{\delta^2 e^t} .
\end{eqnarray*}
 On the other hand, 
\begin{eqnarray*}
| F_{e^{t+s}}(Z_{e^{t+s}})-F_{e^t}(Z_{e^t}) | & \leq & \frac{2C_1}\rho e^{-t}\| f\|_\infty
\end{eqnarray*}
and
 \begin{eqnarray*}
\left\| \int_{e^{t+s}}^{e^t} \po \partial_u F_u\pf (Z_u)\dd u\right\|_\infty 
& \leq & \po C_2 + \frac{C_1}{\rho}\pf e^{-t} \| f\|_\infty.
\end{eqnarray*}
Altogether, if $t$ and $\delta$ are such that $ \po C_2 + 3\frac{C_1}{\rho}\pf e^{-t}  \| f\|_\infty\leqslant \frac{\delta}{2}$ then
\begin{eqnarray*}
\mathbb P\po\left.  \underset{0\leq s\leq T}\sup |\varepsilon_t(s) f|\geqslant \delta \ \right|\ \mathcal F_{e^t} \pf  & \leqslant & \mathbb P\po \underset{0\leq s \leq T}\sup \left.\left|M_{e^{t+s}}-M_{e^t}\right| \geqslant \frac{\delta}{2}\right|\ \mathcal F_{e^t}\pf\\
& \leqslant & \frac{C_3 e^{-t}}{\delta^2} \| f\|_\infty^2
\end{eqnarray*} 
for some $C_3$. On the other hand a similar bound obviously holds   when $ \po C_2 + 3\frac{C_1}{\rho}\pf e^{-t} \| f\|_\infty > \frac{\delta}{2}$ since a probability is always less than $1 < e^t < e^t\po \frac{2}{\delta}\po C_2 + 3\frac{C_1}{\rho}\pf e^{-t} \| f\|_\infty\pf^2$.
\end{proof}

\subsection{Convergence toward sinks}\label{SectionODE2}

In this subsection, Assumptions \ref{HypoUnifnu}, \ref{HypoContiNu} and \ref{HypoSymmetric} are enforced. The two following Lemmas do not require any specific new argument with respect to the work of Benaïm and Raimond:

\begin{lem}\label{LemEpsiV}
There exists a constant $C_4$ such that for all $T,t >0$ and $\delta\in(0,1)$,
\begin{eqnarray}\label{EqEpsiV}
\mathbb P\po\left.  \underset{0\leq s\leq T}\sup \| V_{\varepsilon_t(s)} \|_\infty \geqslant \delta \ \right|\ \mathcal F_{e^t} \pf  & \leqslant &  \frac{C_4 e^{-t}}{\delta^3}.
\end{eqnarray} 
\end{lem}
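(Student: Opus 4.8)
The plan is to reduce the statement to Proposition \ref{PropEpsi} by a weighted union bound, exactly as in \cite[Section~4]{BenaimRaimond}; the only point requiring care is that Proposition \ref{PropEpsi} controls $\varepsilon_t(s)$ tested against a \emph{single} function, whereas here we need a bound that is uniform over the family of test functions $\{W(x,\cdot):x\in\mathcal M\}$.

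I would start by rewriting, after extending $V_\nu$ from Assumption \ref{HypoSymmetric} linearly to signed measures, $V_{\varepsilon_t(s)}(x)=\varepsilon_t(s)\po W(x,\cdot)\pf$, so that $\|V_{\varepsilon_t(s)}\|_\infty=\sup_{x\in\mathcal M}|\varepsilon_t(s)(W(x,\cdot))|$. Since $W$ is smooth on the compact manifold $\mathcal M\times\mathcal M$, it admits a uniformly convergent expansion
\[W(x,u)\ =\ \sum_{k\geqslant 0}c_k\,\phi_k(x)\,\psi_k(u),\]
with $\phi_k,\psi_k\in\mathcal C^\infty(\mathcal M)$, $\|\phi_k\|_\infty=\|\psi_k\|_\infty=1$ and $\sum_{k\geqslant 0}|c_k|^{2/3}<\infty$ (one may build $\phi_k,\psi_k$ from a renormalized tensor eigenbasis of the Laplacian of $\mathcal M$ and invoke the rapid decay of the spectral coefficients of a smooth function). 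As $|\varepsilon_t(s)(\psi_k)|\leqslant 2s\leqslant 2T$ and $\sum_k|c_k|<\infty$, interchanging the sum with the integration against $\varepsilon_t(s)$ is legitimate and yields, for all $s\in[0,T]$,
\[\|V_{\varepsilon_t(s)}\|_\infty\ \leqslant\ \sum_{k\geqslant 0}|c_k|\,|\varepsilon_t(s)(\psi_k)|.\]

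Next I would apply the union bound: with $Q:=\sum_k|c_k|^{2/3}$ and $q_k:=|c_k|^{2/3}/Q$ (a probability on the indices with $c_k\neq0$), the event $\{\sup_{0\le s\le T}\|V_{\varepsilon_t(s)}\|_\infty\geqslant\delta\}$ is, by continuity of $s\mapsto\|V_{\varepsilon_t(s)}\|_\infty$ on $[0,T]$, contained in $\bigcup_k\{\sup_{0\le s\le T}|\varepsilon_t(s)(\psi_k)|\geqslant\delta q_k/|c_k|\}$, so that Proposition \ref{PropEpsi} (with $f=\psi_k$, $\|f\|_\infty=1$) gives
\[\mathbb P\po\left.\underset{0\leqslant s\leqslant T}\sup\|V_{\varepsilon_t(s)}\|_\infty\geqslant\delta\ \right|\ \mathcal F_{e^t}\pf\ \leqslant\ \sum_{k\geqslant 0}\frac{C_3\,e^{-t}\,|c_k|^2}{\delta^2\,q_k^2}\ =\ \frac{C_3\,Q^3}{\delta^2}\,e^{-t}\ \leqslant\ \frac{C_3\,Q^3}{\delta^3}\,e^{-t},\]
the last inequality using $\delta<1$; this is \eqref{EqEpsiV} with $C_4=C_3Q^3$. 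The main (and essentially only) difficulty is the uniformity in $x$ in the first step; I expect the rest to be a routine transcription of \cite[Section~4]{BenaimRaimond}. An alternative to the bilinear expansion would be to cover $\mathcal M$ by balls of radius of order $\delta$, apply Proposition \ref{PropEpsi} at the centres, and bound the oscillation of $V_{\varepsilon_t(s)}$ by $\mathrm{Lip}(W)$ times the total variation of $\varepsilon_t(s)$; but that route produces a constant depending on $T$, so I would prefer the expansion.
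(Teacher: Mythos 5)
Your proof is correct, and it fills in exactly the argument the paper delegates to \cite[Lemma 5.3]{BenaimRaimond}: expand $W$ as a uniformly convergent bilinear series $\sum_k c_k\,\phi_k\otimes\psi_k$ with $\|\phi_k\|_\infty=\|\psi_k\|_\infty=1$ and rapidly summable coefficients, bound $\|V_{\varepsilon_t(s)}\|_\infty$ by $\sum_k|c_k|\,|\varepsilon_t(s)(\psi_k)|$, then apply Proposition~\ref{PropEpsi} to each $\psi_k$ under a weighted union bound. The existence of such an expansion with $\sum_k|c_k|^{2/3}<\infty$ is the only genuinely new step, and your sketch is sound: in a tensor Laplace eigenbasis the coefficients of a smooth kernel decay superpolynomially, while the sup-norms of $L^2$-normalized eigenfunctions grow only polynomially (Sobolev embedding together with Weyl's law), so after renormalization the $c_k$ are as summable as needed; on $\mathbb T^d$ or $\mathbb S^d$ this is immediate from Fourier or spherical harmonics. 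Two small remarks. First, your weighting $q_k\propto|c_k|^{2/3}$ actually yields the sharper $C_3Q^3e^{-t}/\delta^2$; the stated $\delta^{-3}$ is a relaxation (inherited from Bena\"im--Raimond's formulation) which you correctly absorb using $\delta<1$. Second, your reason for discarding the covering alternative is precisely the right one: controlling the oscillation of $V_{\varepsilon_t(s)}$ would cost a factor $\|\varepsilon_t(s)\|_1\leqslant 2s\leqslant 2T$, whereas Proposition~\ref{PropEpsi} is uniform in $T$, so the bilinear expansion is not merely convenient but necessary to obtain a $T$-uniform bound.
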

\begin{proof}
This is a corollary of Proposition \ref{PropEpsi}, as proved in \cite[Lemma 5.3]{BenaimRaimond}.
\end{proof}

\begin{prop}\label{PropCVsink}
Let $\nu$ be a sink of $\Psi$. Then there exist an open neighbourhood $\mathcal U$ of $V_\nu$ in $\po \mathcal C^0\po\mathcal M\pf,\|\cdot\|_\infty\pf$ and $T,\delta>0$ such that for all $t\geqslant T$,
\begin{eqnarray*}
\mathbb P\po \mu_s \underset{s\rightarrow\infty}\longrightarrow \nu \pf & \geqslant & \po 1 - \frac{C_4 e^{-t}}{\delta^3}\pf \mathbb P\po \exists\ s\geqslant t\ \text{s.t.}\ V_{\mu_s}\in\mathcal U\pf 
\end{eqnarray*}
\end{prop}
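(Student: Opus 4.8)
The plan is to follow the strategy of \cite[Proposition 5.4]{BenaimRaimond}, exploiting the fact that under Assumption \ref{HypoSymmetric} the flow $\Psi$ is, up to the change of time $\mu_{e^t}$, a gradient-like flow for the free energy $J$, and that a sink is a strict local minimum of $J$ (restricted to the relevant level set), and hence a Lyapunov-stable attractor with a strict Lyapunov function. First I would record the deterministic input: since $\nu$ is a sink, $\mathcal B_0^u(g)=\mathcal B_0^c(g)=\{0\}$ for its density $g$, so $\mathcal D^2 J(g)$ is definite positive on all of $\mathcal B_0$; by \cite[Section 2.2]{BenaimRaimond} this makes $\{\nu\}$ a linearly stable equilibrium of $\Psi$, and there is an open neighbourhood of $V_\nu$ in $(\mathcal C^0(\mathcal M),\|\cdot\|_\infty)$ — which I will shrink to $\mathcal U$ below — on which $J$ (read through the map $\nu\mapsto V_\nu$) serves as a strict Lyapunov function: trajectories of $\Psi$ started with $V_{\cdot}\in\mathcal U$ stay in a slightly larger neighbourhood and converge to $\nu$.

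Next I would transfer this to the perturbed trajectory $\zeta_t=\mu_{e^t}$, using that $\zeta$ is an asymptotic pseudotrajectory of $\Psi$ (Proposition \ref{PropPseudoTrajectoire}) and, more quantitatively, the decomposition
\begin{eqnarray*}
\zeta_{t+s} f - \zeta_t f &=& \int_t^{t+s}\po \pi(\zeta_u)f - \zeta_u f\pf \dd u + \varepsilon_t(s)f,
\end{eqnarray*}
which exhibits $\zeta$ as an exact trajectory of $\Psi$ perturbed additively by the signed measure $\varepsilon_t(s)$. Because Assumption \ref{HypoSymmetric} gives $V_\nu(x)=\int W(x,u)\nu(\dd u)$ linear in $\nu$, the perturbation seen at the level of potentials is exactly $V_{\varepsilon_t(s)}$, which is controlled by Lemma \ref{LemEpsiV}. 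The core step is then a standard ``stay-in-the-trap'' argument: choose $\mathcal U$, and a smaller neighbourhood $\mathcal U_0$ with $V_\nu\in\mathcal U_0\subset\overline{\mathcal U_0}\subset\mathcal U$, such that the strict decrease of $J$ along $\Psi$ inside $\mathcal U$ dominates any perturbation of size $\leqslant\delta$, in the sense that whenever $V_{\mu_t}\in\mathcal U_0$ and $\sup_{0\le s\le T}\|V_{\varepsilon_t(s)}\|_\infty<\delta$ (with $T$ the fixed time needed for $\Psi$ to bring a point of $\mathcal U_0$ deep inside $\mathcal U_0$), one has $V_{\mu_{t+T}}\in\mathcal U_0$ as well. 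Iterating over the blocks $[t,t+T],[t+T,t+2T],\dots$, on the event that $V_{\varepsilon_{t+kT}}$ stays small for every $k$ the trajectory $V_{\mu_s}$ never leaves $\mathcal U$, and then the Lyapunov/asymptotic-pseudotrajectory machinery (exactly as in \cite[Theorem 3.7]{BenaimLedouxRaimond} and \cite[Proposition 5.4]{BenaimRaimond}) forces $\zeta_t\to\nu$, i.e. $\mu_s\to\nu$.

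It remains to estimate the probability of that good event. By Lemma \ref{LemEpsiV} and the conditional Borel--Cantelli / union bound over the blocks,
\begin{eqnarray*}
\mathbb P\po \exists\ k\geqslant 0:\ \underset{0\le s\le T}\sup\|V_{\varepsilon_{t+kT}(s)}\|_\infty\geqslant\delta \ \Big|\ \mathcal F_{e^t}\pf &\leqslant& \sum_{k\geqslant 0}\frac{C_4 e^{-(t+kT)}}{\delta^3} \ =\ \frac{C_4 e^{-t}}{\delta^3(1-e^{-T})},
\end{eqnarray*}
and after absorbing the harmless factor $(1-e^{-T})^{-1}$ into a relabelled constant (still called $C_4$, or by enlarging $T$) and conditioning further on the $\mathcal F_{e^t}$-measurable event $\{\exists\, s\geqslant t:\ V_{\mu_s}\in\mathcal U_0\}$ used to initialise the iteration, one gets precisely
\begin{eqnarray*}
\mathbb P\po \mu_s\underset{s\to\infty}\longrightarrow\nu\pf &\geqslant& \po 1-\frac{C_4 e^{-t}}{\delta^3}\pf\mathbb P\po\exists\ s\geqslant t\ \text{s.t.}\ V_{\mu_s}\in\mathcal U\pf,
\end{eqnarray*}
which is the claim (replacing $\mathcal U_0$ by $\mathcal U$ only weakens the right-hand side, so it is harmless to state it with $\mathcal U$). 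The step I expect to be the genuine obstacle is the deterministic one: making precise the choice of $\mathcal U$ and $T$ so that the strict Lyapunov decrease of $J$ along $\Psi$ beats perturbations of size $\delta$ uniformly — this is where one must import, rather than reprove, the spectral/Lyapunov analysis of \cite[Section 2.2 and Proposition 5.4]{BenaimRaimond}, checking only that nothing in it uses the diffusive nature of the underlying process, but solely the flow $\Psi$ and Assumption \ref{HypoSymmetric}, both of which are identical here.
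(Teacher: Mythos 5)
Your overall strategy matches the paper's: the paper's proof (which is essentially a pointer) reduces Proposition~\ref{PropCVsink} to the combination of Lemma~\ref{LemEpsiV}, the estimate of \cite[Lemma~5.2]{BenaimRaimond}, and the trap theorem \cite[Theorem~7.3]{Benaim99}, which is exactly the machinery you invoke. Two remarks, one on efficiency and one on a genuine slip.

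On efficiency: the block/union-bound decomposition is more work than needed and is not what \cite{BenaimRaimond} does. Because the bound in Lemma~\ref{LemEpsiV} is uniform in $T$, you can let $T\to\infty$ by monotone convergence to get
\begin{eqnarray*}
\mathbb P\po\left.  \underset{s\geq 0}\sup \| V_{\varepsilon_t(s)} \|_\infty \geqslant \delta \ \right|\ \mathcal F_{e^t} \pf  & \leqslant &  \frac{C_4 e^{-t}}{\delta^3}.
\end{eqnarray*}
Then, since $\varepsilon_{t'}(h)=\varepsilon_t(t'-t+h)-\varepsilon_t(t'-t)$ and $\nu\mapsto V_\nu$ is linear, the event $\{\sup_{s\geq 0}\|V_{\varepsilon_t(s)}\|_\infty<\delta\}$ implies $\sup_{0\leq h\leq T}\|V_{\varepsilon_{t'}(h)}\|_\infty<2\delta$ simultaneously for every $t'\geq t$ — the pseudotrajectory bound at every starting time, with the exact claimed constant $C_4$ (just shrink $\delta$ by a factor $2$ once). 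Your block iteration produces $(1-e^{-T})^{-1}C_4$ and costs you the extra paragraph of relabelling, which is avoidable.

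The genuine gap is the last step. You write that you condition on ``the $\mathcal F_{e^t}$-measurable event $\{\exists\, s\geq t:\ V_{\mu_s}\in\mathcal U_0\}$''. That event is not $\mathcal F_{e^t}$-measurable: it concerns the future trajectory. The correct initialisation — which is what \cite[Lemma~5.4]{BenaimRaimond} and hence the paper does — is to introduce the hitting time $\tau=\inf\{s\geq t:\ V_{\mu_s}\in\mathcal U_0\}$, use the (strong) Markov property of the pair $(Z,\mu)$ (the pair is a PDMP, even though $\mu$ alone is not Markov) to condition on $\mathcal F_{\tau}$ on the event $\{\tau<\infty\}$, and apply the estimate above with starting time $\tau$, observing that $e^{-\tau}\leq e^{-t}$. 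As written, your conditioning step is unjustified, and this is precisely where \cite[Theorem~7.3]{Benaim99} has to be invoked at a stopping time rather than a deterministic one.
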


\begin{proof}
The proof is similar to the one of \cite[Lemma 5.4]{BenaimRaimond}, namely from \eqref{EqEpsiV} is obtained a similar estimate (see \cite[Lemma 5.2]{BenaimRaimond}) which, together with \cite[Theorem 7.3]{Benaim99}, yields Proposition \ref{PropCVsink}.
\end{proof}

From Proposition \ref{PropCVsink} to Theorem \ref{ThmCVsink}, only a control argument is missing according to which the probability to be in the neighbourhood $\mathcal U$ is positive for large times. This is done in the following, which concludes the proof of Theorem \ref{ThmCVsink}:

\begin{prop}\label{PropControle}
Let $\nu \in \mathcal P\po \mathcal M\pf$ and $\mathcal U$ be an open neighbourhood of $V_\nu$ in $\mathcal C^0\po \mathcal M\pf$. Then there exists $T>0$ such that for all $t\geqslant T$,
\begin{eqnarray*}
\mathbb P\po V_{\mu_t} \in \mathcal U\pf  &>&0.
\end{eqnarray*}
\end{prop}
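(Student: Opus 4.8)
The plan is to show that, with positive probability, the occupation measure $\mu_t$ can be steered close (in the weak topology, which controls $V_{\mu_t}$ via the continuity of $\nu\mapsto V_\nu$ forced by smoothness of $W$) to a probability measure whose potential lies in $\mathcal U$. The cleanest target is $\nu$ itself, but since $\nu$ need not be reachable exactly in finite time, one instead argues that for any $\varepsilon>0$ there is a positive-probability event on which $d_w(\mu_t,\nu)<\varepsilon$ for some large $t$, and then that $\mu$ stays trapped in that small ball long enough.

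The key steps, in order. First, I would fix a smooth probability density $g$ on $\mathcal M$ with $V_g\in\mathcal U$ (e.g.\ $g$ the density of $\pi(\nu)$, so $V_g=V_\nu$ up to the additive normalisation that does not affect membership in $\mathcal U$) and reduce the claim, via Assumption \ref{HypoContiNu} (or just continuity of $\nu\mapsto V_\nu$), to producing with positive probability a time $t$ at which $d_{TV}(\mu_t,g)$ — or a weak distance — is small. Second, I would use a support/controllability argument for the velocity jump process: because of the isotropic refreshment lower bound in Assumption \ref{HypoUnifnu} (at rate at least $c\lambda_{\min}$ the velocity is redrawn from an isotropic law on $\mathcal M$), the process $X$ has positive probability, on any time interval $[0,S]$, of following (a neighbourhood of) any prescribed piecewise-geodesic path. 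Concretely: partition $[0,S]$, on each subinterval force a refreshment to a velocity pointing in a chosen direction, and let the process run along the corresponding geodesic; the probability of this finite sequence of events is positive and bounded below. By choosing the path so that its time-averaged empirical measure $\frac1S\int_0^S\delta_{X_u}\,\mathrm du$ approximates $g$ (possible since $\mathcal M$ is connected and geodesics are dense enough — one spends in each small region of $\mathcal M$ a fraction of time proportional to the mass $g$ assigns there), one gets, on this positive-probability event, $\frac1S\int_0^S\delta_{X_u}\,\mathrm du$ weakly $\varepsilon$-close to $g$. Third, I would handle the initial weight: writing $\mu_{t_0+S}=\frac{r+t_0}{r+t_0+S}\mu_{t_0}+\frac{S}{r+t_0+S}\left(\frac1S\int_{t_0}^{t_0+S}\delta_{X_u}\,\mathrm du\right)$, I take $t_0$ first and then $S=S(t_0)$ large enough (but this is a \emph{deterministic} choice, so it does not hurt positivity of the probability, by Markov at time $t_0$ and the lower bound on the controllability event over $[t_0,t_0+S]$, uniform in the starting point $Z_{t_0}$) so that the first term is negligible and $\mu_{t_0+S}$ is weakly $2\varepsilon$-close to $g$, hence $V_{\mu_{t_0+S}}\in\mathcal U$. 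Setting $T=t_0+S$ (and noting the event has positive probability for this $T$, and for all larger $t$ by iterating or by a further short control keeping $\mu$ in the ball) gives the statement.

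The main obstacle is the controllability step: establishing that the empirical time-average $\frac1S\int_0^S\delta_{X_u}\,\mathrm du$ of a \emph{ballistic} trajectory (straight geodesic runs between jumps, not a diffusion that can be localised) can approximate an arbitrary density $g$ with positive probability and with a lower bound uniform in the starting configuration. One has to exploit that between refreshments the run length is essentially free (the refreshment clock is exponential, independent of everything, so short runs have positive density) and that on a compact connected manifold short geodesic segments from any point cover a neighbourhood; a careful but routine covering argument — divide $\mathcal M$ into small charts, allocate to chart $i$ a number of short runs proportional to $\int_{\text{chart }i}g$, and bound below the probability of the whole scenario by a product of positive chart-wise probabilities — closes it. The rest (continuity $\nu\mapsto V_\nu$, the weight decomposition, Markov property) is soft.
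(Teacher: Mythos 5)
Your proposal follows the same overall strategy as the paper: construct a piecewise-geodesic control trajectory whose time-averaged occupation measure approximates the target, use the isotropic refreshment guaranteed by Assumption~\ref{HypoUnifnu} to show the process tracks that trajectory with positive probability, and decompose $\mu_{t_0+S}$ to wash out the initial weight. A few corrections are in order, though.

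First, the parenthetical choice ``$g$ the density of $\pi(\nu)$, so $V_g=V_\nu$'' is wrong: $V_{\pi(\nu)}\neq V_\nu$ unless $\nu$ is a fixed point of $\pi$. What you actually want (and the rest of your plan requires) is simply any smooth $g$ weakly close enough to $\nu$ that $V_g\in\mathcal U$; such $g$ exists because $W$ is smooth, so $\nu\mapsto V_\nu$ is weak-to-uniform continuous. The paper sidesteps $g$ entirely and approximates $V_\nu$ directly by a finite sum $\sum_i p_i W(\cdot,x_i)$, then builds a trajectory that spends a fraction $\approx p_i$ of its time near $x_i$. Second, the geodesic runs in your controllability step must be at a scalar speed $c$ lying in the support of $p$, since the refreshment lower bound of Assumption~\ref{HypoUnifnu} only lets you draw velocities of speeds in $\mathrm{supp}\,p$; the paper makes this explicit by working in the class $\mathcal G_{t,c}$. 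Third, be careful with ``Markov at time $t_0$'': $Z$ alone is not Markov, and the jump rate along the control interval depends on the random $\mu_{t_0}$, so your lower bound on the tracking probability must be uniform over $(Z_{t_0},\mu_{t_0})$, not only over $Z_{t_0}$ (Assumption~\ref{HypoUnifnu} does give this, but it should be said). Finally, the ``for all $t\geqslant T$'' clause is handled only in passing (``iterating or a further short control''); the paper achieves it cleanly by extending the control trajectory $\omega$ periodically and using the explicit estimate $\left\|V_{\omega,t,r}-V_{\omega,t_0,0}\right\|_\infty\leqslant 2\tfrac{r+t_0}{r+kt_0}\|W\|_\infty$ for $t\in[kt_0,(k+1)t_0)$, which is uniformly small for $k$ large. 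Your sketch would need to make that step precise to close the argument.
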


\begin{proof}
For a continuous function $\omega:\R_+\rightarrow \mathcal M$, we denote 
 \[V_{\omega,t,r}(x) =    \frac1{r+t}\int_0^t W(x,\omega(s))\dd s + \frac{r}{r+t} \int  W(x,u) \mu_0(\dd u).\]
For $t,c>0$, we denote by $\mathcal G_{t,c}$ the set of continuous trajectory on $\mathcal M$ of length $t$ which are piecewise geodesic (with a finite number of jumps) with constant scalar speed equal $c$.
 
  We claim that it is sufficient  to construct for any $t$ large enough and  any $x_0 \in \mathcal M$ a trajectory $\omega\in\mathcal G_{t,c}$, where $c>0$ is in the support of the probability $p$ given in Assumption~\ref{HypoUnifnu},  which starts at $x_0$ and such that $V_{\omega,t,r}\in\mathcal U$. Indeed, if we have such a trajectory, 
  for any $\varepsilon>0$ there is a positive probability that a SIVJP with initial position $x_0$ stays at distance less than $\varepsilon$ from the deterministic $ \omega$ up to time $t$. For $\varepsilon$ small enough, it implies $V_{\mu_{t}}\in\mathcal U$. So let us construct such an $\omega$, for fixed $c>0$ and $x_0\in\mathcal M$.

Let $\varepsilon>0$, and let $N\in \mathbb N$ and $(x_i)_{i\in\cco 1,N\ccf} \in \mathcal M^N$ be  such that the balls $B_i(\varepsilon)$ of center $x_i$ and radius $\varepsilon$ cover $\mathcal M$ and such that there exist weights $(p_i)_{i\in\cco 1,N\ccf}$ with $\sum p_i = 1$ such that
\[\underset{x\in\mathcal N}\sup \left|\int V(x,u) \nu(\dd u) - \sum_{i=1}^N V(x,x_i) p_i\right|\ \leqslant \ \varepsilon.\]
 For each $i$ and for any arbitrary time $t_i>0$, there exists small loops $\omega_i \in \mathcal G_{t_i,c}$ with $\omega_i(0) = \omega_i(t_i)=x_i$ and such that $\omega(s) \in B_i(\varepsilon')$ for all $s\in[0,t_i]$, where $\varepsilon'$ is chosen small enough so that $B_i(\varepsilon') \cap B_j(\varepsilon') = \emptyset$ for $i\neq j$. Similarly, there exist $t_0>0$ and $\omega_0 \in \mathcal G_{t_0,c}$ with $\omega_0(0) = \omega_0(t_0) = x_0$ and such that for all $i\in \cco 1,N\ccf$ there exists at least one time $s_i \in [0,t_0]$ with $\omega_0(s_i) = x_i$. Loops can be added to $\omega_0$ in the following way: set $\tilde \omega_0(s) = \omega_0(s)$ up to time $s_i$, set $\tilde \omega_0(s_i + s) = \omega_i(s)$ for $s\in[0,t_i]$, and set $\tilde \omega_0(s_i+t_i+s) = \omega_0(s_i+s)$ for $s\in[0,t-s_i]$. That way, for any  by adding to $\omega_0$ an arbitrary number of loops of arbitrary lengths, we can construct $\omega \in \mathcal G_{t,c}$ for some $t>0$ such that 
\begin{eqnarray*}
\underset{i\in\cco 1,N\ccf}\sup \left|\frac1t\int_0^t \mathbb 1_{\omega(s)\in B_i(\varepsilon')}\dd s - p_i\right| & \leqslant & \varepsilon.
\end{eqnarray*}
Note that, for all $x\in \mathcal M$,
\begin{eqnarray*}
\left|\frac1t \int_0^t W(x,\omega(s)) \dd s - \sum_{i=1}^N \po \frac1t\int_0^t \mathbb 1_{\omega(s)\in B_i(\varepsilon')}\dd s \pf W(x,x_i)\right| & \leqslant & \| \partial_u W\|_\infty \varepsilon'.
\end{eqnarray*}

All this shows that there exist $t_0>0$ and $\omega \in \mathcal G_{t_0,c}$ which starts and ends at $x_0$ and such that $V_{\omega,t_0,0} \in \mathcal U$. By periodicity, $\omega$ may be defined on $\R_+$. 
 Now for a fixed initial weight $r$, for any $k\in\mathbb N$ and $t\in [k t_0,(k+1)t_0)$, 
\[V_{\omega,t,r} \ = \ \frac{k t_0}{r+t} V_{\omega,t_0,0} + \frac{t - kt_0}{r+t} V_{\omega,t-kt_0,0} + \frac{r}{r+t} \int  W(x,u) \mu_0(\dd u).\]
 Hence,
 \[\left\| V_{\omega,t,r} -V_{\omega,t_0,0} \right\|_\infty \ \leqslant \ 2\frac{r+t_0}{r+kt_0} \left\| W\right\|_\infty \]
 and for $k$ larger than some $k_0$, $V_{\omega,t,r}\in \mathcal U$. We have proved that for any $t\geqslant T:=k_0 t_0$, a deterministic trajectory $\omega$ is such that $V_{\omega,t,r}\in \mathcal U$, which concludes.
\end{proof}

\subsection{Non-convergence toward saddles}\label{SectionODE3}

In this subsection, Assumptions \ref{HypoUnifnu}, \ref{HypoContiNu}, \ref{HypoSymmetric} and \ref{HypoMercer} are enforced, 
and $\mu^*\in\mathcal P\po\mathcal M\pf$ is a saddle of the flow induced by the vector field $F(\nu)=\pi(\nu)-\nu$. We denote by $\mathbb M\po\mathcal M\pf$ the set of measures on $\mathcal M$,  and for $m=0$ or 1, $\mathbb M_m\po\mathcal M\pf=\{ \nu\in \mathbb M\po\mathcal M\pf, \ \nu \mathbb 1 = m\}$, where $\mathbb 1$ is the constant function with value 1. We consider $(Z,\mu)=(X,Y,\mu)$ a SIVJP.

Let us recall some facts whose details can be found in \cite[Section 6.1-6.3]{BenaimRaimond}. There exists $\mathcal H\subset \mathcal C^0\po \mathcal M\pf$ endowed with an Hilbert norm $\| \cdot \|_{\mathcal{H}} \geqslant c \| \cdot\|_\infty$ for some $c>0$ (so that the identity from $\mathcal H$ to $\mathcal C^0\po \mathcal M\pf$ is continuous) and so that, moreover, the following holds:
\begin{itemize}
\item For all $\nu\in \mathbb M\po\mathcal M\pf$, $V_\nu \in \mathcal H$ and there exists $C>0$ such that
\begin{eqnarray}\label{EqNormeH}
\| V_\nu\|_{\mathcal H} & \leqslant & C \|\nu\|_1
\end{eqnarray}
where $\|\nu\|_1 = \sup\{|\nu f|, \| f\|_\infty \leqslant 1\}$.
\item There exist an Hilbert basis $(e_i)_{i\geqslant0}$  of $\mathcal H$ and a sequence $s\in \{-1,1\}^{\mathbb N}$ such that 
\begin{eqnarray}\label{EqWBON}
W(x,u) &=& \sum_{i\geqslant 0} s_i e_i(x) e_i(u),
\end{eqnarray}
where the convergence of the sum is uniform with respect to $x,u\in\mathcal M$.
\end{itemize}
Denoting by $\mathcal H_m$ the closure in $\mathcal H$ of $\{ V_{\nu},\ \nu \in \mathbb M_m\po\mathcal M\pf \}$, then
\[F^V(h)\ :=\ \int W(\cdot,u)\frac{e^{-h(u)}}{\int e^{-h(r)}\dd r}\dd u - h,\]
from $\mathcal H_1$ to $\mathcal H_0$, which satisfies $F^V\po V_{\nu}\pf = V_{F(\nu)}$, induces a global smooth flow $\Psi^V$ on $\mathcal H_1$ such that for all $\nu \in \mathcal H_1$ and $t\in \R$,
\[V_{\Psi_t(\nu)} = \Psi_t^V\po V_{\nu}\pf.\]
Moreover $h^* = V_{\mu^*}$ is a saddle of $\Psi^V$. We want to prove that $\mathbb P\po V_{\mu_t} \overset{\mathcal H}{\underset{t\rightarrow\infty}\longrightarrow} h^*\pf = 0$, which will imply Theorem \ref{ThmPasCVsaddle}, but now we work in an Hilbert space rather than on $\mathcal P\po\mathcal M\pf$. In particular, we have an orthogonal decomposition
\begin{eqnarray*}
\mathcal H &=& \mathcal H^u \oplus \mathcal H^s
\end{eqnarray*}
with $\mathcal H^u\neq\{0\}$ and such that for all $v$ in the unstable space $\mathcal H^u$ and $t\in\R$,
\[\| \mathcal D \Psi_t^V(h^*) v\|_{\mathcal H} \geqslant C e^{\lambda |t|} \|   v\|_{\mathcal H} \]
for some $C,\lambda>0$, where $\mathcal D$ stands for the differential operator.  Denoting by
\begin{eqnarray*}
Stab(h^*) &=& \left\{ h\in\mathcal H_1,\ \underset{t\rightarrow\infty} \lim \Psi^V_t(h) = h^*\right\}
\end{eqnarray*}
the basin of attraction of $h^*$, we can construct a $\mathcal C^2$ function $\eta$ from a neighbourhood $\mathcal N$ of $h^*$ in $\mathcal H$ to $\R_+$ such that the following holds:
\begin{itemize}
\item For all $h\in \mathcal N \cap Stab(h^*)$, $\eta\po h\pf =0$.
\item For all $h\in \mathcal N $,
\begin{eqnarray}\label{EqDetaY}
\mathcal D\eta(h) F^V(h) & \geqslant & 0.
\end{eqnarray}
\item For all $\varepsilon>0$, there exist a neighbourhood $\mathcal N_\varepsilon \subset \mathcal N$ of $h^*$ and $C_4>0$ such that, denoting by $\mathcal D^2$ the Hessian operator, for all $h\in\mathcal N_\varepsilon$ and $u,v\in \mathcal H_0$,
\begin{eqnarray}
|\mathcal D^2_{u,v} \eta(h) - D^2_{u,v} \eta(h^*)| & \leqslant & \varepsilon \| u\|_{\mathcal H}\| v\|_{\mathcal H}\label{Eqeta2}\\
|  \mathcal D^2_{u,v} \eta(h)| & \leqslant & C_4 \| u\|_{\mathcal H}\| v\|_{\mathcal H}\label{Eqeta3}\\
|\mathcal D\eta(h) v| &\leqslant & C_4\| v\|_{\mathcal H} \sqrt{\eta(h)}\label{Eqeta4}\\
2 \eta(h) \mathcal D^2_{v,v} \eta(h) -\po \mathcal D \eta(h) v\pf^2 & \geqslant & - C_4 \| v\|_{\mathcal H}^2 \po \eta(h)\pf^{\frac32}.\label{Eqeta5}
\end{eqnarray}
In particular, $\mathcal D^2_{v,v} \eta(h^*) \geqslant 0 $ for all $v\in \mathcal H_0$.
\item \begin{eqnarray}
  \mathcal D^2_{v,v} \eta(h^*) = 0& \Leftrightarrow &v\in H^s
\end{eqnarray}
\end{itemize}
The function $\eta$ is in some sense the square of some distance to $Stab(h^*)$. Indeed, away from $Stab(h^*)$, it necessarily increases along the flow $\Psi^V$ (which is \eqref{EqDetaY}); from \eqref{Eqeta4}, $\sqrt \eta$ is Lipschitz; and \eqref{Eqeta5} implies that, at the saddle $h^*$, $\eta$ is stricly convex in the unstable directions.

Here ends the recalls from \cite{BenaimRaimond}. The strategy is now the following: we will show that for any $L>0$, if $V_{\mu_t}$ is in $\mathcal N$ at time $t$, then $s\mapsto \eta\po V_{\mu_s}\pf$ can reach the level $\frac Lt$ with positive probability, and that from that level, if $L$ is large enough, it has a positive probability not to converge to 0, which will be contradictory with $\mu_t\rightarrow \mu^*$. 

More precisely, rather than with $\mu_t$ and $V_{\mu_t}$, we will work with
\begin{eqnarray*}
\nu_t f &:=& \mu_t f+ \frac1{r+t}Q^{\mu_t} f(Z_t)\\
g_t  &:=&  V_{\nu_t}.
\end{eqnarray*}
Namely, denoting by $V_x = W(x,\cdot) $,
\[g_t(x) = \mu_t V_x + \frac1{r+t} Q^{\mu_t} V_x (Z_t).\]
 Then, $(g_t)_{t\geqslant0}$ is a $\mathcal H_1$-valued semimartingale.
 
Given $L>0$ (to be chosen later on), a time $t>0$ and a neighbourhood $\mathcal N$ of $h^*$ in $\mathcal H$, we consider the following stopping times:
\begin{eqnarray*}
S_t &=& \inf\left\{ s\geqslant t,\ \eta(g_s) \geqslant \frac{L^2}{t}\right\}\\
U_t^{\mathcal N} &=& \inf\left\{ s\geqslant t,\ \mu_s \notin \mathcal N\right\}.
\end{eqnarray*}
We begin with the following technical lemma:
\begin{lem}\label{Lemetag}
There exists $C_5>0$ such that for all $\varepsilon>0$, there exist a neighbourhood $\mathcal N_\varepsilon \subset \mathcal N$ of $h^*$ in $\mathcal H$ and a time $t_0$ such that for all $t\geqslant t_0$,
\begin{eqnarray*}
  \mathbb E\po\left.  \eta\po g_{S_t\wedge U_t^{\mathcal N_\varepsilon}}\pf\ \right|\ \mathcal F_t\pf - \eta\po g_t\pf  &\geqslant &  \frac{-\varepsilon + C_5 \mathbb P\po\left.  S_t\wedge U_t^{\mathcal N_\varepsilon} = \infty \right |\ \mathcal F_t\pf }{r+t}. 
\end{eqnarray*} 
\end{lem}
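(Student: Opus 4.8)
The plan is to analyze the semimartingale $s \mapsto \eta(g_s)$ on the interval $[t, S_t \wedge U_t^{\mathcal N_\varepsilon}]$ via Itô's formula, and to show that its drift is bounded below by a term of order $(r+s)^{-1}$ (positive, coming from \eqref{EqDetaY}) minus an error of order $\varepsilon/(r+s)$ plus a genuinely second-order contribution. The point is that $g_s$ differs from the "naive" occupation-driven quantity $V_{\mu_s}$ by the correction $\frac{1}{r+s} Q^{\mu_s} V_x(Z_s)$, whose role — exactly as in \cite[Section 6]{BenaimRaimond} — is to cancel the leading fluctuation term, turning $g$ into a process whose drift is, up to the stopping time, essentially $\frac{1}{r+s}\mathcal D\eta(g_s) F^V(g_s) \geqslant 0$.

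First I would write down the dynamics of $\nu_t$. Since $\mu_t f = \frac{r\mu_0 + \int_0^t \delta_{X_u} du}{r+t}$, we have $d(\mu_t f) = \frac{1}{r+t}(f(X_t) - \mu_t f)\,dt$, so $d((r+t)\mu_t f) = f(X_t)\,dt$. For the correction term, using the Poisson equation $L^{\nu} Q^\nu f = K^\nu f = f - \Pi(\nu)f$ and that for a fixed function $Q^{\mu_t}f(Z_t)$ evolves with generator $L^{\mu_t}$ in the $Z$-variable plus a slow drift in $\mu_t$ controlled by Lemma \ref{LemQ} (giving $|\partial_t Q^{\mu_t}f| \leqslant C_2 \|f\|_\infty \frac{1}{r+t}$ as in the proof of Proposition \ref{PropEpsi}), one obtains that $d((r+t)\nu_t f)$ equals $\pi(\mu_t)f\,dt$ plus a martingale increment $dM_t^f$ plus a term of order $\frac{1}{r+t}dt$. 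In other words, writing $g_t = V_{\nu_t}$ and using $V_{F(\nu)} = F^V(V_\nu)$ together with the Lipschitz bound \eqref{EqNormeH}, the $\mathcal H_1$-valued semimartingale $g$ satisfies $d g_t = \frac{1}{r+t}\left( F^V(g_t) + \rho_t\right) dt + \frac{1}{r+t} dN_t$ where $\|\rho_t\|_{\mathcal H} \leqslant C/(r+t)$ is a small remainder (from the $\mu$-drift of $Q$, from replacing $\mu_t$ by $\nu_t$, and from replacing $\pi(\mu_t)$ by the relevant fixed-point expression) and $N_t$ is an $\mathcal H_1$-valued martingale whose bracket is controlled, via $\Gamma^{\mu_t}$ and the bound \eqref{EqBorneQ} on $Q$, by $d[N^f, N^f]_t \leqslant C\|f\|_\infty^2\,dt$ uniformly, hence $d[\![N]\!]_t \preccurlyeq C\,dt$ in the appropriate Hilbertian sense.

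Next I would apply Itô's formula to $\eta(g_s)$, which is $\mathcal C^2$ on $\mathcal N$: for $s \in [t, S_t \wedge U_t^{\mathcal N_\varepsilon}]$,
\begin{eqnarray*}
d\,\eta(g_s) &=& \frac{1}{r+s}\mathcal D\eta(g_s)\left(F^V(g_s) + \rho_s\right) ds + \frac{1}{r+s}\mathcal D\eta(g_s)\,dN_s + \frac{1}{2(r+s)^2}\mathcal D^2\eta(g_s)\,d[\![N]\!]_s.
\end{eqnarray*}
Taking conditional expectation kills the martingale term. The main drift term $\frac{1}{r+s}\mathcal D\eta(g_s)F^V(g_s)$ is $\geqslant 0$ by \eqref{EqDetaY}. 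The remainder term is bounded using \eqref{Eqeta4}: $|\mathcal D\eta(g_s)\rho_s| \leqslant C_4\|\rho_s\|_{\mathcal H}\sqrt{\eta(g_s)} \leqslant C_4 \frac{C}{r+s}\sqrt{L^2/t} = O(\varepsilon/(r+s))$ once $t$ is large (here $\sqrt{\eta(g_s)} \leqslant L/\sqrt t$ on $[t,S_t)$ since $S_t$ has not been reached), after shrinking $\mathcal N_\varepsilon$ and increasing $t_0$; this is where $S_t$ enters to keep $\eta$ small so the remainder is negligible. The Hessian term $\frac{1}{2(r+s)^2}\mathcal D^2\eta(g_s)\,d[\![N]\!]_s \geqslant -\frac{\varepsilon}{2(r+s)^2}C\,ds$ by \eqref{Eqeta3}, which is of strictly smaller order $\frac{1}{(r+s)^2}$ and thus also absorbed into $\frac{-\varepsilon}{r+s}$. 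Integrating from $t$ to $S_t \wedge U_t^{\mathcal N_\varepsilon}$ and noting $\int_t^{S_t \wedge U_t^{\mathcal N_\varepsilon}} \frac{ds}{r+s}$ is at least $C_5 \geqslant 0$ on the event that this stopping time is infinite (the integral diverges), while on the complement it is nonnegative, yields precisely the claimed lower bound with the factor $\frac{1}{r+t}$ extracted from the elementary estimate $\int_t^\infty \frac{ds}{r+s} \cdot (\text{stuff}) $ — more carefully, using that $\mathbf{1}_{\{S_t \wedge U_t^{\mathcal N_\varepsilon} = \infty\}}$ forces $\int_t^\infty \frac{ds}{r+s} = \infty$ but $\eta$ stays bounded by $L^2/t$, the positive drift must compensate, giving a contribution $\gtrsim \frac{1}{r+t}$ on that event.

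The main obstacle I expect is the bookkeeping of the remainder $\rho_s$ and the bracket of $N$ in the $\mathcal H$-norm rather than the sup-norm: one must check that replacing $\mu_t$ by $\nu_t$ inside $V_\bullet$, differentiating $Q^{\mu_t}V_x(Z_t)$ in the slowly-varying parameter $\mu_t$, and expressing everything through $F^V$ all produce errors that are genuinely $O((r+s)^{-1})$ in $\mathcal H$ and not merely in $L^\infty$ — this relies on \eqref{EqNormeH}, Lemma \ref{LemQ}, \eqref{EqBorneQ} and the smoothness of $W$, and is the technical heart of the lemma. The rest is a transcription of \cite[Lemma 6.6]{BenaimRaimond} to the PDMP setting, the only structural change being that the martingale bracket comes from the jump carr\'e du champ $\Gamma^{\mu_s}$ instead of a Brownian one, which changes constants but not the form of the estimate.
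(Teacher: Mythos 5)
Your proposal correctly identifies the overall strategy (Itô formula for $\eta(g_s)$, drift dominated by $\frac{1}{r+s}\mathcal D\eta(g_s)F^V(g_s)\geqslant 0$, correction term $\frac{1}{r+t}Q^{\mu_t}V_x(Z_t)$ cancelling the leading fluctuation), but it contains a genuine gap at the decisive step: you misidentify where the positive constant $C_5$ comes from.

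You write that the Hessian term $\frac{1}{2(r+s)^2}\mathcal D^2\eta(g_s)\,d[\![N]\!]_s$ is of "strictly smaller order $(r+s)^{-2}$ and thus also absorbed into $\frac{-\varepsilon}{r+s}$", and then try to extract the positive lower bound from the nonnegative drift term $\frac{1}{r+s}\mathcal D\eta(g_s)F^V(g_s)\geqslant 0$ together with a compensation argument ("$\int_t^\infty \frac{ds}{r+s}=\infty$ but $\eta$ stays bounded, so the positive drift must compensate"). This compensation argument is circular and cannot work: $\mathcal D\eta(g_s)F^V(g_s)$ may vanish identically along the stable manifold, so nonnegativity of the drift gives no quantitative lower bound. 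The actual content of the lemma is that the \emph{second-order} jump term (the Taylor expansion of $\eta(g_u^{(v)})-\eta(g_u)$ around $g_u$, evaluated with the fixed Hessian $\mathcal D^2\eta(g^*)$) produces, after an integration by parts converting $\int_t^\infty \frac{1}{(r+u)^2}\Phi(Z_u)\,du$ into $\frac{1}{r+t}(\mu^*\Phi - 3\varepsilon)$ plus error, a contribution $\frac{1}{2}\mu^*\Phi\cdot\mathbb P(S_t\wedge U_t^{\mathcal N_\varepsilon}=\infty\,|\,\mathcal F_t)/(r+t)$, where $\Phi = \sum_{i,j}\mathcal D^2_{i,j}\eta(g^*)s_i s_j\Gamma^{\mu^*}(Q^{\mu^*}e_i,Q^{\mu^*}e_j)$. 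The lemma then reduces to showing $\mu^*\Phi>0$, which is the heart of the proof and requires: (i) the Mercer decomposition $W=\sum s_i e_i\otimes e_i$ of Assumption \ref{HypoMercer}; (ii) the invariance of $\mu^*$ to rewrite $\int\Gamma^{\mu^*}(Q^{\mu^*}e_i,Q^{\mu^*}e_j)\,d\mu^*$ via $\int K^{\mu^*}e_i\,P_t^{\mu^*}K^{\mu^*}e_j\,d\mu^*$; (iii) a square root $R_t$ of the symmetrized semigroup to make this a positive-semidefinite pairing $\int\mathcal D^2\eta(g^*)(v_t^z,v_t^z)\,\mu^*(dz)$; and (iv) the argument that if this vanished then $V_{x_1}-V_{x_2}\in\mathcal H^s$ for all $x_1,x_2$, forcing $\mathcal H^u=\{0\}$, a contradiction. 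None of this appears in your proposal, and it cannot be skipped. Relatedly, your claim that passing from Brownian to jump "changes constants but not the form of the estimate" is also misleading: in the jump case the second-order term is not $d[N,N]_s$ at all but the genuine Taylor-expansion term of $\eta$ applied to the finite jump increments $g_u^{(v)}-g_u$, and its positivity is what makes the whole lemma nontrivial.
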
 
 \begin{proof}
Between two jumps of the velocity $Y$, the evolution of $g_t$ is deterministic and, for almost every $t\geqslant0$, $\partial_t g_t=B_t$ where
\begin{eqnarray*}
 B_t(x) &=& \frac{1}{r+t}\po V_{\delta_{X_t}}(x) - V_{\mu_t}(x)\pf + \co \partial_t\po \frac1{r+t} Q^{\mu_t}\pf  + \frac1{r+t} D Q^{\mu_t}\cf V_x (Z_t)\\
&=& \frac{1}{r+t}\po V_{\pi\po\mu_t\pf}(x) - V_{\mu_t}(x)\pf +  \co \partial_t\po \frac1{r+t} Q^{\mu_t}\pf  - \frac{\lambda(Z_t,\mu_t)}{r+t} \po H^{\mu_t} - Id\pf Q^{\mu_t}\cf V_x (Z_t)\\
&=& \frac{F^V(g_t)(x)}{r+t} +\frac{N_t V_x}{(r+t)^2}- \frac1{r+t} \lambda(Z_t,\mu_t)\po H^{\mu_t} - Id\pf Q^{\mu_t}  V_x (Z_t),
\end{eqnarray*}
where we used $L^\mu Q^\mu = K^\mu$, and defined $N_t$ by
\begin{eqnarray*}
N_t f &=& (r+t)^2 \partial_t\po \frac1{r+t} Q^{\mu_t}\pf f(Z_t) + (r+t)\po F(\mu_t)-F(\nu_t)\pf f.
\end{eqnarray*}
 Note that from Lemmas \ref{LemPK} and \ref{LemQ}, $\| N_t \|_1\leq C_6$ for some $C_6>0$. Writing $  g_t^{(v)}(x) = \mu_t V_x + \frac1{r+t} Q^{\mu_t} V_x (X_t,v)$, we thus have (using the definition of a SIVJP)
\begin{eqnarray*}
\eta\po g_t\pf - \eta(g_s) &=& M_t - M_s + \int_s^t \mathcal D\eta\po g_u\pf B_u +\lambda\po Z_u,\mu_u\pf \int \po \eta\po g_u^{(v)}\pf - \eta(g_u)\pf h^{\mu_u}(Z_t,\dd v)\dd u
\end{eqnarray*}
where $M_t$ is a martingale. Set $g^* = V_{\mu^*}$ and, for $u\geqslant0$, 
\begin{eqnarray*}
R_u  &=& \int \Big(\eta\po g_u^{(v)}\pf - \eta(g_u) \\
& &  -\co  \mathcal D\eta\po g_u\pf\po g_u^{(v)}-g_u\pf +\frac12\mathcal  D^2\eta(g^*) \po g_u^{(v)}-g_u, g_u^{(v)}-g_u\pf \cf \Big) h^{\mu_u}(Z_t,\dd v).
\end{eqnarray*}
From  \eqref{EqNormeH}, for some $C>0$, $\|  g_t^{(v)} - g_t\|_{\mathcal H}\leqslant \frac{C}{\rho(r+t)}$ for all $v$ which, together with Inequalities \eqref{Eqeta2} and \eqref{Eqeta3},  imply that for all $\varepsilon>0$ there exist a neighbourhood $\mathcal N_\varepsilon$ of $\mu^*$ and a time $t_0$ such that if $g_t \in \mathcal N_\varepsilon$ and $t\geqslant t_0$ then $|R_t|\leqslant \frac{\varepsilon}{(r+t)^2}$. On the other hand we can also choose $\mathcal N_\varepsilon$ so that  for all $\mu \in\mathcal N_\varepsilon$, $\|\lambda(\cdot,\mu) - \lambda(\cdot,\mu^*)\|_\infty\leqslant \varepsilon$, and  (according to \eqref{Eqeta4}), $|\mathcal  D\eta(V_\mu) |\leqslant \varepsilon$. Finally, denoting by
\begin{eqnarray*}
  g_t^{(v),*}(x)-g_t^*(x) &=& \frac1{r+t} \po   Q^{\mu^*} V_x(X_t,v)  -Q^{\mu^*} V_x (Z_t)\pf ,
\end{eqnarray*}
 we can also  choose (using \eqref{EqNormeH} and Assumption \ref{HypoContiNu}) $\mathcal{N}_\varepsilon$ and $t_0$ such that  
 \begin{eqnarray*}
\left\| \int  \po g_t^{(v),*} -g_t^* \pf h^{\mu^*}(Z_t,\dd v) -  \int \po g_t^{(v)}-g_t\pf  h^{\mu_t}(Z_t,\dd v) \right\|_{\mathcal H} &\leqslant & \frac\varepsilon{r+t} .
\end{eqnarray*}
In other words, writing
\begin{eqnarray}\label{Eqeta}
\eta\po g_t\pf - \eta(g_s)  &=& M_t - M_s  + \int_s^t \co   \frac{\mathcal D\eta\po g_u\pf   F^V(g_u)}{r+u} + \widetilde R_u  \cf \dd u \notag\\
 & & +   \frac12 \int_s^t\lambda\po Z_u,\mu^*\pf \int \mathcal D^2\eta(g^*) \po g_u^{(v),*}-g_u^*,  g_u^{(v),*}-g_u^*\pf h^{\mu^*}(Z_t,\dd v)  \dd u  ,
 \end{eqnarray}
with a remainder
\begin{eqnarray*}
\widetilde R_u &:=& \frac{ \mathcal D\eta\po g_u\pf N_u V}{(r+u)^2}  +\lambda\po Z_u,\mu_u\pf R_u    \\
& &  +\ \frac12 \int   \lambda\po Z_u,\mu_u\pf \mathcal  D^2\eta(g^*) \po  g_u^{(v)}-g_u, g_u^{(v)}-g_u\pf h^{\mu_t}(Z_t,\dd v)\\
& &  - \  \frac12 \int  \lambda\po Z_u,\mu^*\pf  \mathcal D^2\eta(g^*) \po  g_u^{(v),*}-g_u^*,\bar g_u^{(v),*}-g_u^*\pf    h^{\mu^*}(Z_t,\dd v)  , 
\end{eqnarray*}
we can choose a neighbourhood $\mathcal N_\varepsilon$ and a time $t_0$ such that for all $t_0\leqslant t\leqslant s\leqslant   S_t\wedge U_t^{\mathcal N_\varepsilon}$, $|\widetilde R_s| \leqslant \frac{\varepsilon}{(r+s)^2}$ (we used \eqref{EqNormeH} and \eqref{Eqeta3} again). Taking the expectation in \eqref{Eqeta}, the martingale increment vanishes, and together with \eqref{EqDetaY} and the fact $\mathcal D^2_{v,v}\eta \geqslant 0$ for all $v\in\mathcal H_0$, we have obtained so far
\begin{eqnarray*}
  \mathbb E\po\left.  \eta\po g_{S_t\wedge U_t^{\mathcal N_\varepsilon}}\pf\ \right|\ \mathcal F_t\pf - \eta\po g_t\pf  &\geqslant & - \frac{\varepsilon}{r+t} +
 \frac12 \mathbb E\po \left.\mathbb 1_{S_t\wedge U_t^{\mathcal N_\varepsilon} = \infty}\int_t^{\infty} \frac{1}{(r+u)^2}\Phi(Z_u)   \dd u\right |\ \mathcal F_t\pf
\end{eqnarray*}
with (recall that $W(x,y) = \sum s_i e_i(x) e_i(y)$)
\begin{eqnarray*}
\Phi\po z\pf &:=&   \sum_{i,j}\mathcal D^2_{i,j}\eta(g^*) s_i s_j \Gamma^{\mu^*}\po Q^{\mu^*} e_i , Q^{\mu^*} e_j\pf(z).
\end{eqnarray*}
From
\[\Phi(Z_u)  \ =\ \mu_u \Phi + (r+u)\partial_u\po \mu_u \Phi\pf,\]
an integration by part yields
\begin{eqnarray*}
 \int_t^{\infty} \frac{1}{(r+u)^2}\Phi(Z_u)   \dd u &=& - \frac{\mu_t \Phi}{r+t} + 2 \int_t^\infty \frac{\mu_u \Phi}{\po r+u\pf^2}\dd u.
\end{eqnarray*}
We can choose the neighbourhood $\mathcal N_\varepsilon$ so that for all $\mu \in \mathcal N_\varepsilon$, $|\mu \Phi - \mu^*\Phi|\leqslant \varepsilon$. That way, on the event $\{S_t \wedge U_t^{\mathcal N_\varepsilon} = \infty\}$,
\begin{eqnarray*}
 \int_t^{\infty} \frac{1}{(r+u)^2}\Phi(Z_u)   \dd u &\geqslant & \frac{1}{r+t}\po \mu^*\Phi - 3\varepsilon\pf
\end{eqnarray*}
and thus
\begin{eqnarray*}
  \mathbb E\po  \eta\po g_{S_t\wedge U_t^{\mathcal N_\varepsilon}}\pf\ |\ \mathcal F_t\pf - \eta\po g_t\pf  &\geqslant &  \frac{-4\varepsilon + \frac12 \po \mu^*\Phi\pf \mathbb P\po\left.  S_t\wedge U_t^{\mathcal N_\varepsilon} = \infty \right |\ \mathcal F_t\pf }{r+t}. 
\end{eqnarray*}
It remains to prove that $\mu^*\Phi >0$. Since $\mu^*$ is invariant for $L^{\mu^*}$,
\begin{eqnarray*}
& &\int \Gamma^{\mu^*}\po Q^{\mu^*} e_i , Q^{\mu^*} e_j\pf  \mu^* \po \dd z\pf \\
&=& -\int \frac{\po L^{\mu^*}  Q^{\mu^*} e_i \pf Q^{\mu^*} e_j +  Q^{\mu^*} e_i \po L^{\mu^*}  Q^{\mu^*} e_j \pf}2  \mu^* \po \dd z\pf \\
&=& \int \int_0^\infty \frac{\po K^{\mu_*}e_i\pf\po P_t^{\mu^*} K^{\mu^*}  e_j\pf+\po P_t^{\mu^*} K^{\mu^*}  e_i\pf\po K^{\mu_*}e_j\pf}2   \dd t\   \mu^* \po \dd z\pf .
\end{eqnarray*}
Denoting by $\po P_t^{\mu^*}\pf'$ the adjoint of $P_t^{\mu^*}$ in $L^2\po\mu^*\pf$, we write
\begin{eqnarray*}
\int K^{\mu^*}  e_i \frac{\po P_t^{\mu^*}\pf'+ P_t^{\mu^*}}2 K^{\mu^*}  e_j\ \mu^* \po \dd z\pf 
&=&  \int \po R_t K^{\mu^*}  e_i \pf\po R_t K^{\mu^*}  e_j\pf \mu^* \po \dd z\pf  
\end{eqnarray*}
with $R_t$ a square root of the self-adjoint operator $\frac12\po \po P_t^{\mu^*}\pf'+ P_t^{\mu^*}\pf $, and more precisely the unique square root  which is a Markov operator. Writing
\begin{eqnarray*}
v^z_t &:=& \sum_i s_i \po R_t K^{\mu^*}  e_i(z) \pf e_i\\
&=& V_{\po \delta_z R_t\pf} - V_{\mu^*},
\end{eqnarray*}
we obtain, for each $t\geqslant 0$,
\begin{eqnarray*}
& & \int\sum_{i,j}\mathcal D^2_{i,j}\eta(g^*) s_i s_j  \po K^{\mu^*}  e_i\pf\po \frac{\po P_t^{\mu^*}\pf'+ P_t^{\mu^*}}2 K^{\mu^*}  e_j\pf \mu^* \po \dd z\pf \\
&=&   \int\sum_{i,j} \mathcal D^2_{i,j}\eta(g^*) s_i s_j  \po R_t K^{\mu^*}  e_i \pf\po R_t K^{\mu^*}  e_j\pf \mu^* \po \dd z\pf  \\
&=&   \int\mathcal  D^2 \eta(g^*) \po v^z_t, v^z_t\pf \mu^* \po \dd z\pf\\
&\geqslant & 0.
\end{eqnarray*} 
Hence, in order to prove
\[\mu^* \Phi = \int_0^\infty \int\sum_{i,j}D^2_{i,j}\eta(g^*) s_i s_j  \po K^{\mu^*}  e_i\pf\po \frac{\po P_t^{\mu^*}\pf'+ P_t^{\mu^*}}2 K^{\mu^*}  e_j\pf \mu^* \po \dd z\pf  \dd t > 0\]
 it is sufficient to prove that 
\begin{eqnarray*}  0 &<&  \int\sum_{i,j}\mathcal D^2_{i,j}\eta(g^*) s_i s_j  \po K^{\mu^*}  e_i\pf\po   K^{\mu^*}  e_j\pf \mu^* \po \dd z\pf  \\
& = &  \int\mathcal  D^2 \eta(g^*) \po v^z_0, v^z_0\pf \mu^* \po \dd z\pf. 
\end{eqnarray*}  
Since $\mu^* = \pi(\mu^*)$ admits a positive density with respect to the Lebesgue measure, $\mu^*\Phi =0$ would imply that $\mathcal D^2 \eta(g^*) \po v^z_0, v^z_0\pf = 0 $ for all $z \in \mathcal M$, or in other words that $v^z_0 \in\mathcal  H^s$ for all $z\in M$. This would imply that $V_{x_1}-V_{x_2}\in\mathcal  H^s$ for all $x_1,x_2\in \mathcal M$, and therefore, $V_m \in\mathcal  H^s$ for all $m\in \mathbb M_0\po\mathcal M\pf$. This would be in contradiction with the fact that $\mathcal H^u\neq\{0\}$. Thus, $\mu^*\Phi >0$.
\end{proof}
This first lemma yields the following:
\begin{lem}\label{LemStUt}
There   exist a neighbourhood $\mathcal N_\varepsilon \subset \mathcal N$ of $h^*$ in $\mathcal H$, a time $t_0$ and $p>0$  such that for all $t\geqslant t_0$,
\begin{eqnarray*}
  \mathbb P\po\left.S_t \wedge U_t^{\mathcal N_\varepsilon} <\infty\ \right| \ \mathcal F_t \pf & \geqslant & p. 
\end{eqnarray*} 
\end{lem}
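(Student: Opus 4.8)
The substantive estimate is Lemma~\ref{Lemetag}; the present lemma follows from it by feeding in an a priori upper bound for the stopped value of $\eta(g_\cdot)$. The heuristic is: on the event $\{S_t\wedge U_t^{\mathcal N_\varepsilon}=\infty\}$ the process stays in the small neighbourhood $\mathcal N_\varepsilon$ with $\eta(g_s)<L^2/t$ forever, yet Lemma~\ref{Lemetag} asserts that in that case $\eta(g_s)$ must increase, on average, by a definite amount of order $1/(r+t)$; since $\eta(g_s)$ cannot exceed $L^2/t$, this cannot happen once the two scales are forced to be incompatible, so $\mathbb P(S_t\wedge U_t^{\mathcal N_\varepsilon}=\infty\mid\mathcal F_t)$ is bounded away from $1$.

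To implement this, fix $\varepsilon>0$ (small, to be quantified below) and let $\mathcal N_\varepsilon\subset\mathcal N$, $t_0$ be as in Lemma~\ref{Lemetag}; shrinking $\mathcal N_\varepsilon$ we may assume it is compactly contained in $\mathcal N$, so that $\eta$ remains defined a little beyond it, and that the conclusion of Lemma~\ref{Lemetag} still holds on it. The key deterministic fact is that, for every $t$ large enough (depending on $L$),
\[
\eta\po g_{S_t\wedge U_t^{\mathcal N_\varepsilon}}\pf\ \leqslant\ \frac{2L^2}{t},
\]
with the convention that on $\{S_t\wedge U_t^{\mathcal N_\varepsilon}=\infty\}$ the left-hand side denotes $\lim_{s\to\infty}\eta(g_s)$ --- a limit which exists almost surely since, by \eqref{Eqeta}, $s\mapsto\eta(g_s)$ is the sum of an $L^2$-bounded martingale and an almost surely convergent finite-variation process. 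Indeed, on $\{S_t\wedge U_t^{\mathcal N_\varepsilon}=\infty\}$ one has $\eta(g_s)<L^2/t$ for all $s\geqslant t$; if $U_t^{\mathcal N_\varepsilon}<S_t$ then $U_t^{\mathcal N_\varepsilon}\in[t,S_t)$, so again $\eta(g_{U_t^{\mathcal N_\varepsilon}})<L^2/t$; and if $S_t$ is reached first, then $\eta(g_{S_t^-})\leqslant L^2/t$ and the only possible overshoot comes from the jump of $g$ at time $S_t$, which has $\mathcal H$-norm $O(1/(r+t))$ (as established in the proof of Lemma~\ref{Lemetag} from \eqref{EqNormeH}) --- so, $\sqrt\eta$ being Lipschitz on $\mathcal N$ by \eqref{Eqeta4}, $\sqrt{\eta(g_{S_t})}\leqslant L/\sqrt t+O(1/(r+t))$ and hence $\eta(g_{S_t})\leqslant 2L^2/t$ for $t$ large.

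Combining this bound with Lemma~\ref{Lemetag}, and using $\eta(g_t)\geqslant0$ together with $2L^2/t\leqslant 4L^2/(r+t)$ for $t\geqslant r$, gives
\[
\frac{-\varepsilon+C_5\,\mathbb P\po S_t\wedge U_t^{\mathcal N_\varepsilon}=\infty\mid\mathcal F_t\pf}{r+t}\ \leqslant\ \mathbb E\po \eta(g_{S_t\wedge U_t^{\mathcal N_\varepsilon}})\mid\mathcal F_t\pf-\eta(g_t)\ \leqslant\ \frac{4L^2}{r+t},
\]
hence $\mathbb P(S_t\wedge U_t^{\mathcal N_\varepsilon}=\infty\mid\mathcal F_t)\leqslant(\varepsilon+4L^2)/C_5$, i.e.
\[
\mathbb P\po S_t\wedge U_t^{\mathcal N_\varepsilon}<\infty\mid\mathcal F_t\pf\ \geqslant\ 1-\frac{\varepsilon+4L^2}{C_5}\ =:\ p .
\]
It then remains to fix the free parameters so that $p>0$: one takes, say, $L$ with $4L^2<C_5$ and then $\varepsilon<C_5-4L^2$, enlarging $t_0$ so that all the ``for $t$ large'' provisos above are met; since $C_5=\frac12\mu^*\Phi$ is a fixed positive constant (cf.\ the proof of Lemma~\ref{Lemetag}), this is compatible with whatever further lower bound on $L$ the later escape argument may demand, and one concludes as in the corresponding step of \cite{BenaimRaimond}.

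The one point that requires care --- and it is the only one, the real work being already done in Lemma~\ref{Lemetag} --- is the displayed deterministic bound: one must check that the jump of the semimartingale $g$ at the threshold-hitting time $S_t$ does not overshoot $L^2/t$ by more than the order $1/(r+t)$ of the whole estimate, which is precisely where the Lipschitz control \eqref{Eqeta4} of $\sqrt\eta$ and the $O(1/(r+t))$ size of the jumps of $g$ are used together.
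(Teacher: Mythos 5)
Your overall strategy (plug an a priori upper bound for $\eta(g_{S_t\wedge U_t^{\mathcal N_\varepsilon}})$ into Lemma~\ref{Lemetag}) matches the paper's, but the specific upper bound you use is cruder and this crudeness costs you in a way that breaks the later argument. You bound $\eta(g_{S_t\wedge U_t^{\mathcal N_\varepsilon}})$ by the deterministic quantity $2L^2/t$, which after combining with Lemma~\ref{Lemetag} yields
\[
\mathbb P\po S_t\wedge U_t^{\mathcal N_\varepsilon}<\infty\mid\mathcal F_t\pf\ \geqslant\ 1-\frac{\varepsilon+4L^2}{C_5},
\]
a quantity that is positive only when $L$ is small, namely $L^2<(C_5-\varepsilon)/4$. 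The paper instead uses the sharper pointwise bound $\eta(g_{S_t\wedge U_t^{\mathcal N_\varepsilon}})\leqslant L^2/(S_t\wedge U_t^{\mathcal N_\varepsilon})$, which ties the stopped value of $\eta$ to the stopping time itself; since $(r+t)/(S_t\wedge U_t^{\mathcal N_\varepsilon})$ vanishes on $\{S_t\wedge U_t^{\mathcal N_\varepsilon}=\infty\}$ and is bounded by $1$ (for $t$ large) on the complement, one gets $\mathbb E[(r+t)/(S_t\wedge U_t^{\mathcal N_\varepsilon})\mid\mathcal F_t]\leqslant\mathbb P(S_t\wedge U_t^{\mathcal N_\varepsilon}<\infty\mid\mathcal F_t)$, and then writing $q=\mathbb P(S_t\wedge U_t^{\mathcal N_\varepsilon}<\infty\mid\mathcal F_t)$ on \emph{both} sides of the inequality, $q\geqslant(-\varepsilon+C_5(1-q))/L^2$, solving for $q$ yields
\[
\mathbb P\po S_t\wedge U_t^{\mathcal N_\varepsilon}<\infty\mid\mathcal F_t\pf\ \geqslant\ \frac{C_5-\varepsilon}{L^2+C_5},
\]
which is positive for every $L>0$ as soon as $\varepsilon<C_5$.

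This is a genuine gap, not merely a loss of a constant. Lemma~\ref{LemH} requires $L$ to be chosen large (the proof there ends by imposing $L^2>8C_{10}$), and the proof of Theorem~\ref{ThmPasCVsaddle} uses a \emph{single} $L$ satisfying the hypotheses of both Lemma~\ref{LemStUt} and Lemma~\ref{LemH}. Your version of Lemma~\ref{LemStUt} imposes an \emph{upper} bound on $L$, and there is no reason the two constraints $L^2<(C_5-\varepsilon)/4$ and $L^2>8C_{10}$ should be simultaneously satisfiable: $C_5=\tfrac12\mu^*\Phi$ and $C_{10}$ are independent constants dictated by the problem. Your closing sentence, that your choice ``is compatible with whatever further lower bound on $L$ the later escape argument may demand,'' is an unjustified assertion and is exactly where the proposal fails. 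To repair it, replace the crude bound $2L^2/t$ by the bound $L^2/(S_t\wedge U_t^{\mathcal N_\varepsilon})$ and exploit the resulting self-referential estimate in $q$, as the paper does. (Your careful handling of the jump overshoot at $S_t$ via the Lipschitz property~\eqref{Eqeta4} of $\sqrt\eta$ is correct and would still be needed; the issue is only the form of the a priori bound.)
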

\begin{proof}
From Lemma \ref{Lemetag} and
\begin{eqnarray*}
  \mathbb E\po\left.  \eta\po g_{S_t\wedge U_t^{\mathcal N_\varepsilon}}\pf\ \right|\ \mathcal F_t\pf - \eta\po g_t\pf  &\leqslant &  \mathbb E\po\left.  \frac{L^2}{S_t\wedge U_t^{\mathcal N_\varepsilon}}\ \right|\ \mathcal F_t\pf  ,
\end{eqnarray*} 
we get
\begin{eqnarray*}
  \mathbb P\po\left. S_t \wedge U_t^{\mathcal N_\varepsilon} < \infty\ \right| \ \mathcal F_t \pf & \geqslant & \mathbb E\po\left.  \frac{r+t}{S_t\wedge U_t^{\mathcal N_\varepsilon}}\ \right|\ \mathcal F_t\pf \\
   &\geqslant & \frac{-\varepsilon + C_5 \mathbb P\po\left.  S_t\wedge U_t^{\mathcal N_\varepsilon} = \infty \right |\ \mathcal F_t\pf}{L^2}.
\end{eqnarray*} 
Therefore,
\begin{eqnarray*}
  \mathbb P\po\left. S_t \wedge U_t^{\mathcal N_\varepsilon} < \infty\ \right| \ \mathcal F_t \pf    &\geqslant & \frac{-\varepsilon + C_5  }{L^2 + C_5},
\end{eqnarray*}
which concludes for $\varepsilon< C_5$.
\end{proof}
Let $\mathcal N_\varepsilon  $,  $t_0$ and $p $ be as in Lemma \ref{LemStUt}, and consider the event
\begin{eqnarray*}
\mathcal G &=& \left\{\liminf \ \eta\po g_t\pf \ >\ 0\right\}.
\end{eqnarray*}
\begin{lem}\label{LemH}
For $L$ large enough, there  exists $t_1>0$ such that for all $t\geqslant t_1$, on the event $\{S_t < U_t^{\mathcal N_\varepsilon} = \infty\}$,
\begin{eqnarray*}
\mathbb P\po \mathcal G\ |\ \mathcal F_{S_t}\pf & \geqslant & \frac12.
\end{eqnarray*}
\end{lem}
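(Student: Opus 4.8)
Following the corresponding step in \cite{BenaimRaimond}, the plan is to analyse the scalar semimartingale $s\mapsto\eta\po g_s\pf$ on $[S_t,\infty)$, restricted to the event $E:=\{S_t<U_t^{\mathcal N_\varepsilon}=\infty\}$, and to show that for $L$ large it stays bounded away from $0$ with conditional probability $\ge\tfrac12$ --- which is exactly $\mathcal G$.

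First I would rerun the It\^o computation of Lemma~\ref{Lemetag} from time $S_t$ rather than from $t$, to get
\[\eta\po g_s\pf \ =\ \eta\po g_{S_t}\pf + \po M_s-M_{S_t}\pf + A_s, \qquad s\ge S_t,\]
with $M$ the compensated-jump martingale and $A_s=\int_{S_t}^s b_u\,\dd u$. On $E$ we have $\mu_u\in\mathcal N_\varepsilon$ for $u\ge t$ and, since $\|g_u-V_{\mu_u}\|_{\mathcal H}\le C/(r+u)$ by \eqref{EqNormeH} and \eqref{EqBorneQ}, with the choice of $\mathcal N_\varepsilon$ from Lemma~\ref{Lemetag} also $g_u\in\mathcal N_\varepsilon$ and $\eta\po g_u\pf\le\bar\eta_\varepsilon<\infty$ for $u\ge t_1$. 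The estimates of Lemma~\ref{Lemetag} used only $g_u,\mu_u\in\mathcal N_\varepsilon$, never the constraint $\eta(g_u)\le L^2/t$, so they apply on all of $[t_1,\infty)$: by \eqref{EqDetaY}, positivity of $\mathcal D^2\eta(g^*)$ on $\mathcal H_0$, and the remainder bound, $b_u\ge-\varepsilon/(r+u)^2$, hence $A_s\ge-\varepsilon/(r+S_t)\ge-\varepsilon/t$ for $s\ge S_t$; and by \eqref{Eqeta4}, \eqref{Eqeta3} and $\|g_u^{(v)}-g_u\|_{\mathcal H}\le C/(r+u)$,
\[\langle M\rangle_\infty-\langle M\rangle_{S_t} \ \le\ C\po\int_{S_t}^\infty\frac{\eta\po g_u\pf}{(r+u)^2}\,\dd u + \frac1{(r+S_t)^3}\pf.\]

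The heart of the argument is a doubling scheme. Put $T_0=S_t$ and, inductively, $\tau_k=\inf\{s\ge T_k:\eta(g_s)\ge2\eta(g_{T_k})\text{ or }\eta(g_s)\le\tfrac12\eta(g_{T_k})\}$; let $G_k$ be the event that $\eta$ hits $2\eta(g_{T_k})$ before $\tfrac12\eta(g_{T_k})$ --- including $\tau_k=\infty$, which already forces $\mathcal G$ since then $\liminf_s\eta(g_s)\ge\tfrac12\eta(g_{T_k})>0$ --- and on $G_k\cap\{\tau_k<\infty\}$ set $T_{k+1}=\tau_k$. On $G_k^c$ the process is at $\tfrac12\eta(g_{T_k})$ at the finite time $\tau_k$, so, the drift increment over the round being $\ge-\varepsilon/t$ and $\eta(g_{T_k})\,t\ge L^2\gg\varepsilon$ for $L$ large, $M_{\tau_k}-M_{T_k}\le-\tfrac14\eta(g_{T_k})$; meanwhile $\eta(g_u)<2\eta(g_{T_k})$ on $[T_k,\tau_k)$ makes the bracket bound give $\langle M\rangle_{\tau_k}-\langle M\rangle_{T_k}\le C\eta(g_{T_k})/(r+T_k)$, so Doob's $L^2$ maximal inequality yields $\mathbb P(G_k^c\mid\mathcal F_{T_k})\le C/(\eta(g_{T_k})(r+T_k))$. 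As $\eta(g_{T_0})(r+T_0)\ge L^2$ and this quantity at least doubles at each successful step, $\mathbb P(G_k^c\mid\mathcal F_{T_k})\le C\,2^{-k}/L^2$ on the event that rounds $0,\dots,k-1$ succeeded, hence $\mathbb P(\bigcap_kG_k\mid\mathcal F_{S_t})\ge1-C'/L^2$. Since $\eta(g_s)\le\bar\eta_\varepsilon$ the recursion terminates: at the first step with $\eta(g_{T_{k^*}})\ge\bar\eta_\varepsilon/2$ the level $2\eta(g_{T_{k^*}})$ is unreachable, and the same Doob estimate --- now $\langle M\rangle_\infty-\langle M\rangle_{T_{k^*}}\le C\bar\eta_\varepsilon/(r+T_{k^*})\le C\bar\eta_\varepsilon/t$ against a threshold $\ge\bar\eta_\varepsilon/8$ --- shows $\eta(g_s)$ never drops below $\tfrac12\eta(g_{T_{k^*}})$ with conditional probability $\ge1-C/(\bar\eta_\varepsilon t)$, forcing $\liminf_s\eta(g_s)\ge\bar\eta_\varepsilon/4>0$, i.e.\ $\mathcal G$. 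Combining through the tower property, on $E$,
\[\mathbb P\po\mathcal G\mid\mathcal F_{S_t}\pf \ \ge\ 1-\frac{C'}{L^2}-\frac{C}{\bar\eta_\varepsilon\,t} \ \ge\ \frac12\]
for $L$, then $t\ge t_1$, large enough.

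The main obstacle is precisely this doubling. A naive use of the bracket bound with the crude estimate $\eta(g_u)\le\bar\eta_\varepsilon$ gives $\langle M\rangle_\infty-\langle M\rangle_{S_t}=O(1/t)$, against the squared fluctuation scale $\eta(g_{S_t})^2\asymp L^4/t^2$ one has to beat --- a useless ratio of order $t/L^4$. Localising to the successive doubling excursions, on which $\eta(g_u)$ stays within a factor $2$ of its current level, makes the quadratic variation accumulated over a round comparable to $(\mathrm{level})^2/L^2$ instead of to a constant, and summing the resulting geometric series of failure probabilities is what lets the largeness of $L$ take effect. The rest --- checking that the Lemma~\ref{Lemetag} remainders stay $O(\varepsilon/(r+u)^2)$ throughout $[S_t,\infty)$ on $E$, and the bookkeeping of the nested conditionings over the (random but a.s.\ finite) number of rounds --- is routine.
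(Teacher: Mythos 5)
Your proposal is, up to the technical steps you defer as routine, correct, but it takes a genuinely different route from the paper's. The paper applies the Itô decomposition not to $\eta(g_s)$ but to $\sqrt{\eta(g_s)}$: the estimate~\eqref{Eqeta4} makes $\sqrt\eta$ uniformly Lipschitz on $\mathcal N_\varepsilon$ in the $\mathcal H$-norm, so the bracket of its martingale part $A$ is bounded by $C/(r+u)^2$ \emph{pointwise}, independently of the current value of $\eta(g_u)$, giving a tail bracket of order $1/(r+S_t)$. Since the threshold to be defended is $L/(2\sqrt{S_t})$ --- half the initial value of $\sqrt{\eta(g_{S_t})}$ --- a single Doob $L^2$ maximal inequality on $I_t=\inf_{s\ge S_t}(A_s-A_{S_t})$ then yields failure probability $4S_tC_{10}/(L^2(r+S_t))\le C/L^2$, with no localisation. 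The extra drift terms (those involving $N_u$, $\widetilde R_u$, and $\mathcal D^2\sqrt\eta$) are controlled in absolute value by $C/(r+u)^2$ via \eqref{Eqeta4}, \eqref{Eqeta5} and Lemma~\ref{LemQ}, while the leading drift $\mathcal D\eta(g_u)F^V(g_u)/(2\sqrt{\eta(g_u)}(r+u))$ is nonnegative by \eqref{EqDetaY}, so they only cost an $O(1/S_t)$ deterministic loss --- exactly as in your drift estimate. Your doubling scheme achieves the same effect by hand: over an excursion where $\eta$ stays within a factor two of its current level, the bracket of the $\eta$-martingale is proportional to that level, restoring the correct ratio bracket/(threshold)$^2 \asymp 1/L^2$; summing the geometric series of per-round failure probabilities and invoking boundedness of $\eta$ on $\mathcal N_\varepsilon$ to terminate the recursion recovers the same bound. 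Both arguments work; the paper's reparametrisation by $\sqrt\eta$ is the slicker move, as it encodes the ``current scale'' automatically and collapses your nested stopping-time bookkeeping to a single application of Doob. One small caution for your version: the per-round doubling of $\eta(g_{T_k})(r+T_k)$ is only approximate because of jump overshoots at the stopping times $\tau_k$; since the jumps in $\eta$ at time $u$ are of size $O(\sqrt{\eta(g_u)}/(r+u))$ by \eqref{Eqeta4} and \eqref{EqNormeH}, this is harmless for $t$ large, but it should be said.
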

\begin{proof}
Making use of the notations introduced in the proof of Lemma \ref{Lemetag},
\begin{eqnarray*}
\sqrt{\eta\po g_t\pf} - \sqrt{\eta(g_s)} &=& A_t - A_s + \int_s^t \Big [\frac{\mathcal D\eta\po g_u\pf B_u}{2\sqrt{\eta\po g_u\pf}}\\
& & +\ \lambda\po Z_u,\mu_u\pf\int \po \sqrt{\eta\po  g_u^{(v)}\pf} - \sqrt{\eta\po g_u\pf}\pf h^{\mu_u}(Z_u,\dd v)\Big]\dd u
\end{eqnarray*}
where $A_t$ is an $\mathcal F_t$-martingale. Let
\begin{eqnarray*}
I_t &=& \underset{s\in\co S_t,U_t^{\mathcal N_\varepsilon}\cf}\inf \po A_s - A_{S_t}\pf
\end{eqnarray*}
and $T_t = \inf\{s>S_t,\ \eta(g_s)=0\}$. On the event $\{S_t<U_t^{\mathcal N_\varepsilon}\}$, for $s\in[S_t,T_t\wedge U_t^{\mathcal N_\varepsilon})$,
\begin{eqnarray*}
\sqrt{\eta\po g_s\pf}  & \geqslant & \frac{L}{\sqrt{S_t}} + I_t + \int_{S_t}^s \po \frac{\mathcal D\eta\po g_u\pf  }{2\sqrt{\eta\po g_u\pf}}\po  \frac{F^VY(g_u)}{r+u} +\frac{N_u V}{(r+u)^2}\pf +\lambda\po Z_u,\mu_u\pf\widetilde R_u \pf \dd u\\
& &  +\frac12 \int_{S_t}^s \lambda\po Z_u,\mu_u\pf \int \mathcal D^2 \sqrt \eta\po g_u\pf\po  g_u^{(v)} - g_u,  g_u^{(v)} - g_u\pf  h^{\mu_u}(Z_u,\dd v)\dd u
\end{eqnarray*}
with 
\begin{eqnarray*}
\widetilde R_u &=& \int \Big[ \sqrt{\eta\po  g_u^{(v)}\pf} - \sqrt{\eta\po g_u\pf} - \mathcal D\sqrt{\eta}\po g_u\pf\po g_u^{(v)} - g_u\pf \\
& & -\ \frac12 \mathcal D^2 \sqrt \eta\po g_u\pf\po   g_u^{(v)} - g_u,  g_u^{(v)} - g_u\pf \Big] h^{\mu_u}(Z_u,\dd v),
\end{eqnarray*}
which satisfies $|\widetilde R_u| \leqslant \frac{C_7}{(r+u)^2}$ for some $C_7$. Hence, on the event $\{S_t<U_t^{\mathcal N_\varepsilon}\}\cap \left\{I_t \geqslant - \frac{L}{2\sqrt{S_t}}\right\}$, for $s\in[S_t,T_t\wedge U_t^{\mathcal N_\varepsilon}]$, for some $C_8$,
\begin{eqnarray*}
\sqrt{\eta\po g_s\pf}  & \geqslant & \frac{L}{2\sqrt{S_t}} - \frac{C_8}{ S_t}.    
\end{eqnarray*}
For $t\geqslant t_1$ large enough, this is greater than $\frac{L}{4\sqrt{S_t}}$. For such $t\geqslant t_1$, thus, 
\begin{eqnarray*}
\{S_t<U_t^{\mathcal N_\varepsilon} = \infty\}\cap \left\{I_t \geqslant - \frac{L}{2\sqrt{S_t}}\right\} &\subset & \mathcal G.
\end{eqnarray*}
On the other hand, by Doob inequality, on the event $\{S_t<U_t^{\mathcal N_\varepsilon}= \infty\}$, for some $C_9,C_{10}$,
\begin{eqnarray*}
& & \mathbb P\po I_t < - \frac{L}{2\sqrt{S_t}}\pf \\
& \leqslant & \frac{4 S_t}{L^2} \mathbb E\po\left.\int_{S_t}^s  \lambda\po Z_u,\mu_u\pf \int \po \sqrt \eta\po   g_u^{(v)}\pf- \sqrt \eta\po g_u\pf \pf^2 h^{\mu_u}(Z_u,\dd v) \dd u \ \right|\ \mathcal F_{S_t}\pf \\
 & \leqslant & \frac{4 S_t C_9}{L^2} \mathbb E\po\left.\int_{S_t}^s  \int \|   g_u^{(v)} -   g_u  \|_{\mathcal H}^2 h^{\mu_u}(Z_u,\dd v)\dd u\ \right|\ \mathcal F_{S_t}\pf\\
 & \leqslant & \frac{4 S_t C_{10}}{L^2\po r+S_t\pf} . 
\end{eqnarray*}
We conclude by choosing  $L^2>8C_{10}$.
\end{proof}
 
\begin{proof}[Proof of Theorem \ref{ThmPasCVsaddle}]
We follow \cite[Proof of Theorem 2.26]{BenaimRaimond} (with a slight modification: we believe there was something unclear in the latter about the event $\{U_t^{\mathcal N_\varepsilon}<\infty\}$). We fix $L$, $t_0$, $t_1$ and $\mathcal N_\varepsilon$ as in Lemmas \ref{LemStUt} and \ref{LemH}, and let $A=\{\exists t>0,U_t^{\mathcal N_\varepsilon} = \infty\}$. For $t\geqslant t_0 \vee t_1$,
\begin{eqnarray}
\mathbb P\po \mathcal G\ |\ \mathcal F_t \pf & \geqslant & \mathbb E\po \left.\mathbb 1_{\mathcal G} \mathbb 1_{S_t < U_t^{\mathcal N_\varepsilon} = \infty}\ \right|\ \mathcal F_t \pf \notag\\
& \geqslant & \frac12 \mathbb P\po \left. S_t < U_t^{\mathcal N_\varepsilon} = \infty \ \right|\ \mathcal F_t \pf \notag\\
& \geqslant & \frac12 \po p -\mathbb P\po \left.  U_t^{\mathcal N_\varepsilon} < \infty \ \right|\ \mathcal F_t \pf\pf.\label{EqPH}
\end{eqnarray}
Almost surely,
\begin{eqnarray*}
\mathbb P\po \mathcal G\ |\ \mathcal F_t \pf & \underset{t\rightarrow \infty}\longrightarrow & \mathbb 1_{\mathcal G}\\
  \mathbb 1_{U_t^{\mathcal N_\varepsilon} = \infty} & \underset{t\rightarrow \infty}\longrightarrow & \mathbb 1_{A},
\end{eqnarray*}
so that
\begin{eqnarray*}
\mathbb E\po \left|\mathbb P\po U_t^{\mathcal N_\varepsilon} = \infty\ |\ \mathcal F_t\pf - 1_{A}\right|\pf &\leqslant & \mathbb E\po \left|\mathbb P\po A\ |\ \mathcal F_t\pf - 1_{A}\right|\pf  + \mathbb E\po \left|1_{U_t^{\mathcal N_\varepsilon} = \infty} - 1_{A}\right|\pf\\
& \underset{t\rightarrow \infty}\longrightarrow & 0.
\end{eqnarray*}
In other words, $\mathbb P\po U_t^{\mathcal N_\varepsilon} < \infty\ |\ \mathcal F_t\pf$ converges in the $L^1$ sense toward $1_{A^c}$, and letting $t$ go to infinity in \eqref{EqPH} yields
\begin{eqnarray*}
1_{\mathcal G} & \geqslant & \frac12 \po p - \mathbb 1_{A^c}\pf
\end{eqnarray*}
almost surely, which implies $A\subset \mathcal G$. Finally, almost surely
\[\{\mu_t\rightarrow \mu^* \} \subset A \subset \mathcal G \subset \{\mu_t\nrightarrow \mu^* \} \]
so that $\mathbb P\po  \mu_t\rightarrow \mu^*\pf =0$.
\end{proof}

\section{Quadratic interaction} \label{SectionQuadra}

In this section we consider the settings of Theorem \ref{ThmWQuadra}, and in particular the self-interacting potential on $\mathbb T$
\begin{eqnarray}\label{EqDefiWquadra}
W(x,z) &=& \rho \po \frac12|e^{i x} - e^{iz}|^2 - 1\pf = -\rho \cos(x-z)
\end{eqnarray}
for some $\rho\in \R$.  If $\rho>0$, $W$ is a self-attraction potential, if $\rho<0$ it is a self-repulsion one. We want to understand how the long-time behaviour of a SITP with such a self-interaction potential is affected by $\rho$ and by the external potential $U$.

\bigskip

First, let us remark that Assumptions \ref{HypoUnifnu}, \ref{HypoContiNu}, \ref{HypoSymmetric} and \ref{HypoMercer} are  satisfied in this case. As remarked in the Introduction, Assumptions \ref{HypoSymmetric} and \ref{HypoMercer}  are a consequence of \cite[Section 1.2]{MonmarcheRTP}. To check Assumptions  \ref{HypoUnifnu} and \ref{HypoContiNu}, set $\lambda^{\nu}(x,y) = 2\lambda_{\min} + \po y\partial_x V_\nu(x)\pf_+$ and
\begin{eqnarray*}
H^{\nu} f(x,y) & =& \frac{\lambda_{\min} + \po y\partial_x V_\nu(x)\pf_+}{\lambda^{\nu}(x,y)} f(x,-y) + \frac{\lambda_{\min}}{\lambda^{\nu}(x,y)} f(x,y). 
\end{eqnarray*}
Then $L^{\nu} = D + \lambda^\nu\po H^\nu - I\pf$, and for a positive $f$,
\begin{eqnarray*}
Q^{\nu} f(x,y) & \geqslant & \frac{\lambda_{\min}}{2 \lambda_{\min} + \| \partial_x W\|_\infty} \po f(x,-y) + f(x,y)\pf.
\end{eqnarray*}
All the other conditions are clear. Hence, Theorems \ref{ThmFix}, \ref{ThmCVsink} and \ref{ThmPasCVsaddle} hold.

\bigskip

Recall the notation
\begin{eqnarray*}
\overline{\pi}_\rho(a,b)(\dd z) & = & \frac{e^{-U(z) + \rho\po a\cos(z)+b\sin(z)\pf}}{\int e^{-U(x) + \rho\po a\cos(x)+b\sin(x)\pf} \dd x}\dd z.
\end{eqnarray*}
When there is no ambiguity on the value of $\rho$, we simply write $\overline{\pi}(a,b)$.

\subsection{Without exterior potential}

Bena\"im, Ledoux and Raimond studied in \cite{BenaimLedouxRaimond} the self-interacting diffusion on the sphere with a quadratic self-interaction and no exterior potential. When $U$ is a constant function, we recover in the piecewise deterministic case the same behaviour as in the diffusion case, which is the following: if the force is self-repulsing or if the self-attraction is not too strong, then the process does not localize, in the sense its empirical measure converges to the uniform measure on the circle, so that the particle behaves at infinity like an integrated telegraph process on the circle (as studied in \cite{Volte-Face}). In contrast, when the self-attraction is strong enough, a random direction is picked and the empirical measure of the process goes to a Gaussian law centered at this direction.

More precisely, 
it is proven in \cite[Lemma 4.8]{BenaimLedouxRaimond} that the equation
\begin{eqnarray*}
\int \cos  \dd \overline \pi_\rho (r,0) & = & r
\end{eqnarray*}
admits a positive solution, denoted by $r(\rho)$, if and only if $\rho>2$ (as far as notations are concerned, $(a,\beta)$ used in \cite{BenaimLedouxRaimond} correspond to $(\rho,r)$ used here by $\rho=4a$ and $\beta = r\rho$). 

\begin{proof}[Proof of Theorem \ref{ThmWQuadra}, point 1]
The arguments are exactly those of \cite[proof of Theorem 4.5]{BenaimLedouxRaimond}, which rely only on the limiting deterministic flow, which is the same in the diffusion and the PDMP case, on the fact that the empirical measure of the self-interaction process is an asymptotic pseudo-trajectory of this flow, and on the estimate \eqref{EqPropEpsi}. 
\end{proof}

\subsection{Stability of the Gibbs measure}

Let $U:\mathbb T\rightarrow \R$ be a smooth potential and denote by $m_U = \overline\pi(0,0)$ the associated Gibbs measure. If $(Z,\mu)$ is an SITP with potential $U$ and with a constant $W$ (namely there is no self-interaction and $Z$ is a Markov process) then $\mu_t \rightarrow m_U$. When $W $ is not constant, a necessary condition for $\mu_t \rightarrow m_U$ is that $\pi \po m_U\pf = m_U$, or in other words, $x\mapsto \int W(x,\cdot)\dd m_U$ is constant. When $W$ is given by \eqref{EqDefiWquadra}, this reads
\begin{eqnarray}\label{EqquadramU0}
& & \int \cos \dd m_U\ =\ \int \sin\dd m_U\ =\ 0.
\end{eqnarray}

\begin{prop}\label{PropStabGibbs}
Let $(Z,\mu)$ be a SITP with potential $U$ and $W$, where $W$ is given by \eqref{EqDefiWquadra} and $U$ is such that \eqref{EqquadramU0} holds. Let
\[\eta = \po \int \sin^2 \dd m_U\pf\po \int \cos^2 \dd m_U\pf - \po \int \sin\times \cos \dd m_U\pf^2 \hspace{10pt}\in \po 0, \frac14\cf.\]
 Then 
\begin{eqnarray*}
\rho \po\frac12 + \sqrt{\frac14 -\eta }\pf <1\hspace{20pt} & \Rightarrow & \hspace{20pt} \mathbb P\po \mu_t \rightarrow m_U\pf \ >\ 0 \\
\rho \po\frac12 + \sqrt{\frac14 -\eta }\pf >1\hspace{20pt} & \Rightarrow & \hspace{20pt} \mathbb P\po \mu_t \rightarrow m_U\pf \ =\ 0
\end{eqnarray*} 
\end{prop}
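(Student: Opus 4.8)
The plan is to identify $m_U$ as a fixed point of $\pi$, to decide whether it is a sink or a saddle of the flow $\Psi$, and then to invoke Theorem \ref{ThmCVsink} or Theorem \ref{ThmPasCVsaddle} (Assumptions \ref{HypoUnifnu}--\ref{HypoMercer} having already been checked at the beginning of this section). First I would note that, since $\tilde W(x,z) = -\rho(\cos x\cos z + \sin x\sin z)$, the biasing potential depends on $\nu$ only through the pair $(a_\nu,b_\nu) := \po \int\cos\,\dd\nu,\int\sin\,\dd\nu\pf$, namely $V_\nu(x) = U(x) - \rho(a_\nu\cos x + b_\nu\sin x)$ up to an additive constant, so that $\pi(\nu) = \overline{\pi}_\rho(a_\nu,b_\nu)$. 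Since $m_U = \overline{\pi}_\rho(0,0)$, condition \eqref{EqquadramU0} says precisely that $(a_{m_U},b_{m_U}) = (0,0)$, hence $\pi(m_U) = m_U$.

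Next I would compute the Hessian of the free energy $J$ at $m_U$. With $W(x,z) = U(x) + U(z) - \rho\cos(x-z)$ and $\int g = 1$ one obtains
\[J(g) \ =\ \int U g + \int g\ln g - \frac\rho2\po \po \int\cos\, g\,\dd x\pf^2 + \po \int\sin\, g\,\dd x\pf^2\pf,\]
and therefore, for $\phi\in\mathcal B_0$ and using that $\int\cos\,\dd m_U = \int\sin\,\dd m_U = 0$ kills the first‑order cross terms,
\[\mathcal D^2 J(m_U)(\phi,\phi) \ =\ \int\frac{\phi^2}{m_U} - \rho\po \po \int\cos\,\phi\,\dd x\pf^2 + \po \int\sin\,\phi\,\dd x\pf^2\pf.\]
The substitution $\psi = \phi\, m_U^{-1/2}$ turns this into the form $\|\psi\|^2 - \rho\po \langle a,\psi\rangle^2 + \langle b,\psi\rangle^2\pf$ on the hyperplane $\{m_U^{1/2}\}^{\perp}$ of $L^2(\T,\dd x)$, where $a = m_U^{1/2}\cos$ and $b = m_U^{1/2}\sin$ are orthogonal to $m_U^{1/2}$ (again by \eqref{EqquadramU0}) and linearly independent since $m_U>0$. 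The operator $I - \rho(a\otimes a + b\otimes b)$ is the identity on $\{a,b\}^\perp$, while on $\mathrm{span}\{a,b\}$ the rank‑two operator $a\otimes a + b\otimes b$ shares its nonzero eigenvalues with the Gram matrix
\[M \ =\ \begin{pmatrix} \int\cos^2\,\dd m_U & \int\cos\sin\,\dd m_U \\ \int\cos\sin\,\dd m_U & \int\sin^2\,\dd m_U\end{pmatrix},\]
which is positive definite, of trace $1$ and determinant $\eta$; hence these eigenvalues are $\mu_\pm = \tfrac12\pm\sqrt{\tfrac14-\eta}$, and $\eta\in(0,\tfrac14]$ (the stated range is recovered as a byproduct). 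Consequently the spectrum of $\mathcal D^2 J(m_U)$ on $\mathcal B_0$ is $\{1\}\cup\{1-\rho\mu_+,\,1-\rho\mu_-\}$, the eigenvalue $1$ having infinite multiplicity and the corresponding non‑positive eigendirections lying in $\mathrm{span}\{m_U\cos,m_U\sin\}\subset\mathcal B_0$.

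It then remains to read off the dichotomy. If $\rho\po\tfrac12+\sqrt{\tfrac14-\eta}\pf = \rho\mu_+ < 1$, all eigenvalues are positive, so $\mathcal B_0^c(m_U) = \mathcal B_0^u(m_U) = \{0\}$, i.e. $m_U$ is a sink of $\Psi$, and Theorem \ref{ThmCVsink} yields $\mathbb P(\mu_t\to m_U)>0$. If $\rho\mu_+>1$, then $1-\rho\mu_+<0$, so $\mathcal B_0^u(m_U)\neq\{0\}$; provided moreover $\rho\mu_-\neq1$, the fixed point is non‑degenerate, hence a saddle, and Theorem \ref{ThmPasCVsaddle} (available since Assumption \ref{HypoMercer} holds here) gives $\mathbb P(\mu_t\to m_U)=0$. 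The one point I expect to require extra care — and the main obstacle — is the exceptional value $\rho = 1/\mu_-$ (which can occur only when $\eta<\tfrac14$, and which lies in the regime $\rho\mu_+>1$): there $m_U$ carries, besides a genuine unstable direction, a one‑dimensional centre direction, so it is not a non‑degenerate saddle in the sense of the definition and Theorem \ref{ThmPasCVsaddle} does not apply verbatim. I would dispose of this single value of $\rho$ either by checking that the non‑convergence argument of Section \ref{SectionODE3} only uses the existence of one strictly expanding direction of $\Psi^V$ at $V_{m_U}$ (the centre direction being harmless to that mechanism), or, failing that, by a monotonicity/approximation argument in $\rho$. Everything else is the routine bookkeeping sketched above.
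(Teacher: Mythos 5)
Your proof is correct and produces the right eigenvalue condition, but it takes a genuinely different route from the paper. The paper does not compute the Hessian of the free energy $J$ at all; instead it exploits the fact that $F(\nu)=\overline\pi_\rho(\bar\nu)-\nu$ depends on $\nu$ only through $\bar\nu=(\int\cos\,\dd\nu,\int\sin\,\dd\nu)\in\R^2$, so $\bar\nu_t$ solves the autonomous two‑dimensional ODE $\partial_t\bar\nu=\overline F(\bar\nu)$, and then it reads off the nature of $m_U$ from the Jacobian $J_{\overline F}(0,0)=\rho M_U-I$, together with the observation that on the slice $A=\{\bar\nu=0\}$ the flow satisfies $\Psi_t(\nu)=e^{-t}(\nu-m_U)+m_U$, i.e.\ contracts explicitly at rate one. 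Your Hessian computation and the paper's Jacobian computation produce the same numbers, $1-\rho\mu_\pm$ and $\rho\mu_\pm-1$ respectively (the sign flip being expected since $J$ is a Lyapunov function for $\Psi$), so the dichotomy is identical. What the paper's route buys is a cleaner sink case: rather than invoking the abstract statement ``positive Hessian on $\mathcal B_0$ implies sink,'' it exhibits $m_U$ as a hyperbolic sink of the planar projection and a global attractor of $\Psi_{|A}$, which directly gives $\mathcal B_0^u(m_U)=\mathcal B_0^c(m_U)=\{0\}$. What your route buys is that it plugs straight into the definitions of Section 1.3 (the $\mathcal B_0^{u/c/s}$ decomposition is defined via $\mathcal D^2 J$), so there is nothing to justify about the finite‑dimensional reduction. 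Your intermediate bookkeeping (expansion of $J$ using $\int g=1$ and \eqref{EqquadramU0}, the orthogonality of $a,b$ to $m_U^{1/2}$, the Gram matrix having trace $1$ and determinant $\eta$, the bound $\eta\in(0,1/4]$) is all correct.

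Your caveat about $\rho=1/\mu_-$ is genuine, and you should be aware that the paper's own proof has exactly the same gap: it asserts ``$(0,0)$ is a saddle for the flow induced by $\overline F$'' and then ``$m_U$ is a saddle for $\Psi$'' and applies Theorem~\ref{ThmPasCVsaddle}, but at $\rho=1/\mu_-$ (which, when $\eta<1/4$, does lie inside the regime $\rho\mu_+>1$) the Jacobian has a zero eigenvalue, so $m_U$ is degenerate in the sense of Section 1.3 and the hypothesis of Theorem~\ref{ThmPasCVsaddle} is not literally met. Neither proof disposes of that single value, so flagging it does not set your argument behind the paper's, but to make either airtight one would need to handle it — either by a monotonicity/perturbation argument in $\rho$, or by checking that the non‑convergence mechanism of Section~\ref{SectionODE3} only uses the existence of one strictly expanding direction at $V_{m_U}$, as you suggest.
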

\textbf{Remark:} In particular if $\rho \leqslant 0$ (self-repulsion), $\mathbb P\po \mu_t \rightarrow m_U\pf \ >\ 0 $.
\begin{proof}
Note that $\pi(\nu) = \overline \pi (\bar \nu )$ with
\begin{eqnarray*}
\bar \nu &=& \po \int \cos \dd \nu ,\int \sin \dd \nu \pf .
\end{eqnarray*}
Let $\Psi$ be the flow on $\mathcal P\po \mathbb T\pf$ induced by $F(\nu) = \pi(\nu) - \nu$, and $\nu_t = \Psi_t(\nu)$. Then  $(a_t,b_t) = \bar \nu_t$ solves $\partial_t(a_t,b_t) = \overline F(a_t,b_t)$ where
\[\overline F(a,b) = \begin{pmatrix}
 \int \cos \dd \overline\pi (a,b)  -a\\
  \int \sin \dd \overline\pi (a,b)  -b
\end{pmatrix}.\]
By assumption, $\overline F(0,0)=0$ and we compute $J_{\overline F}(0,0)$ the Jacobian of $\overline F$ at $(0,0)$:
\begin{eqnarray*}
J_{\overline F}(0,0) &=&  \begin{pmatrix}
\rho\int \cos^2 \dd m_U -1 & \rho \int \cos \times \sin \dd m_U\\
\rho\int \cos \times \sin \dd m_U & \rho\int \sin^2 \dd m_U -1
\end{pmatrix}\hspace{10pt} := \rho M_U - I.
\end{eqnarray*}
The matrix $M_U$ is symmetric definite positive since $TrM_U = 1$ and $\det M_U = \eta>0$ (the strict positivity of $\eta$ comes from the fact $m_U$ admits a positive density with respect to the Lebesgue measure). The eigenvalues of $J_{\overline F}(0,0)$ are $\rho \po\frac12 \pm \sqrt{\frac14 -\eta }\pf -1$.

First, suppose $\rho \po\frac12 + \sqrt{\frac14 -\eta }\pf >1$. It implies $(0,0)$ is a saddle for the flow induced by $\overline F$ on $\R^2$, so that $m_U$ is a saddle for the flow induced by $F$ on $\mathcal P\po \mathbb T\pf$, and  Theorem \ref{ThmPasCVsaddle} concludes this case.

Second, suppose $\rho \po\frac12 + \sqrt{\frac14 -\eta }\pf <1$. Then $(0,0)$ is a sink for the flow induced by $\overline F$. Let $G:\mathcal P\po\mathbb T\pf\rightarrow \R^2$ be the mapping defind by $G(\nu)=\bar\nu$, and let
\[A\ = \ G^{-1}(0,0) \ =\ \left\{ \nu\in\mathcal P\po\mathbb T\pf,\ \int \cos \dd \nu = \int \sin \dd \nu = 0\right\}.\]
For $\nu\in A$, $\Psi_t\po\nu\pf = e^{-t}\po \nu - m_U\pf + m_U$, so that $m_U$ is a global attractor for the restriction $\Psi_{|A}$. Hence, $m_U$ is a sink for $\Psi$, and Theorem \ref{ThmCVsink} concludes.
\end{proof}

\subsection{Phase transition in a symmetric double-well potential}

The equation $\pi(\nu)=\nu$ with a double-well exterior potential $U$ (on $\R$ rather than $\mathbb T$), together with a quadratic attraction potential $W$, has been studied by Tugaut in  \cite{Tugaut2014}, motivated by the study of the McKean-Vlasov equation \eqref{EqMCKean}.  His ideas may be adapted  to our context.
\begin{prop}\label{PropDeuxpuits}
Suppose $U$ satisfies \eqref{EqquadramU0} and $U(z)=U(\pi - z)$ for all $z$. Write $A_0\ = \ \left\{\nu\in \mathcal P\po\mathbb T\pf,\ \int \sin \dd \nu = 0\right\}$.
\begin{itemize}
\item If $\rho\int \cos^2 \dd m_U \leqslant 1$ then 
\[A_0 \cap Fix(\pi) \ = \ \{ m_U\}.\]
\item If $\rho \int \cos^2 \dd m_U  >1$ then $A_0 \cap Fix(\pi)$ contains exactly three points, $m_U$ is a saddle of the flow $\Psi$ and the two other points of $A_0 \cap Fix(\pi)$ are sinks for the restriction of $\Psi$ to $A_0$. 
\end{itemize}
\end{prop}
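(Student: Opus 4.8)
The plan is to collapse the fixed-point problem on $\mathcal P(\mathbb T)$ to a scalar equation. Since $W(x,z)=-\rho\cos(x-z)$, the map $\pi$ factors through the first Fourier coefficients: $\pi(\nu)=\overline\pi_\rho(a_\nu,b_\nu)$ with $a_\nu=\int\cos\,\dd\nu$, $b_\nu=\int\sin\,\dd\nu$, so $Fix(\pi)$ is in bijection (via $\nu\mapsto(a_\nu,b_\nu)$) with the zero set of $\overline F$ on the closed unit disk, exactly as in the proof of Proposition~\ref{PropStabGibbs}. The reflection $z\mapsto\pi-z$, which fixes $U$ by hypothesis and sends $\cos$ to $-\cos$ and $\sin$ to $\sin$, pushes $\overline\pi(a,0)$ onto $\overline\pi(-a,0)$; together with \eqref{EqquadramU0} this gives $\int\sin\,\dd\overline\pi(a,0)=0$ and $\int\cos\sin\,\dd m_U=0$, so $A_0$ is invariant under $\Psi$ and $A_0\cap Fix(\pi)$ is precisely $\{\overline\pi(a,0):a\in[-1,1],\ h(a)=a\}$, where $h(a):=\int\cos\,\dd\overline\pi(a,0)$. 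The same symmetry makes $h$ odd; hence $h(0)=0$, i.e. $m_U=\overline\pi(0,0)$ is always a fixed point lying in $A_0$.

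The core of the argument is the count of solutions of $h(a)=a$ in $[-1,1]$. Differentiating under the integral, $h'(a)=\rho\,\mathrm{Var}_{\overline\pi(a,0)}(\cos)$, so the scalar vector field $\phi(a):=h(a)-a$ has $\phi'(0)=\rho\int\cos^2\,\dd m_U-1$. Following the ideas of \cite{Tugaut2014}, the key input is that $h$ is strictly concave on $(0,\infty)$: increasing $a$ tilts $\overline\pi(a,0)$ towards the set $\{\cos z=1\}$, which makes the law of $\cos z$ negatively skewed and forces $\partial_a\mathrm{Var}_{\overline\pi(a,0)}(\cos)<0$ there. Granting this, $\phi$ is odd, with $\phi(0)=0$, $\phi(1)<0$ (as $h(1)<1$), and $\phi'$ nonincreasing on $(0,\infty)$. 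If $\rho\int\cos^2\,\dd m_U\leqslant1$ (in particular whenever $\rho\leqslant0$, where $h$ is already nonincreasing), then $\phi'\leqslant\phi'(0)\leqslant0$ on $(0,\infty)$, so $\phi<0$ there and $a=0$ is the unique zero, giving $A_0\cap Fix(\pi)=\{m_U\}$. If $\rho\int\cos^2\,\dd m_U>1$, then $\phi>0$ just to the right of $0$ while $\phi(1)<0$, and strict concavity forces exactly one zero $a_*\in(0,1)$; by oddness the zero set is $\{-a_*,0,a_*\}$, so $A_0\cap Fix(\pi)$ has exactly three points.

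Finally I would read off the nature of the equilibria from the free energy $J(g)=\int Ug-\tfrac\rho2(a_g^2+b_g^2)+\int g\log g$, whose critical points are $Fix(\pi)$ and whose Hessian is $\mathcal D^2J(g)[\delta g,\delta g]=\int(\delta g)^2/g-\rho(a_{\delta g}^2+b_{\delta g}^2)$. Taking the admissible perturbation $\delta g=\cos(\cdot)\,m_U$ (mean zero by \eqref{EqquadramU0}, and with $b_{\delta g}=0$ since $\int\cos\sin\,\dd m_U=0$, hence tangent to $A_0$) gives $\mathcal D^2J(m_U)[\delta g,\delta g]=\int\cos^2\,\dd m_U\,(1-\rho\int\cos^2\,\dd m_U)$, negative precisely in the supercritical regime; thus $\mathcal B_0^u(m_U)\neq\{0\}$, and (the fixed point being non-degenerate except at one exceptional value of $\rho$) $m_U$ is a saddle of $\Psi$. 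At $\overline\pi(a_*,0)$, computing $\mathcal D^2(J_{|A_0})$ on perturbations with $\int\delta g=\int\sin\,\delta g=0$ shows it is positive definite iff $\rho\int\cos^2\,\dd\overline\pi(a_*,0)<1$, i.e. iff $h'(a_*)<1$, i.e. iff $\phi'(a_*)<0$, which holds by the strict concavity of $\phi$ on $(0,\infty)$ at its positive zero; hence $\overline\pi(\pm a_*,0)$ are strict local minima of $J_{|A_0}$, i.e. sinks of $\Psi_{|A_0}$. The single genuinely non-formal step, and the main obstacle, is the concavity property of $h$ on $(0,\infty)$ invoked in the second paragraph; everything else is the Fourier reduction together with convexity bookkeeping on the free energy, following \cite{Tugaut2014} and Proposition~\ref{PropStabGibbs}.
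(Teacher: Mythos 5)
The Fourier reduction, the identification of $A_0\cap Fix(\pi)$ with the zero set of $\phi(a)=h(a)-a$ where $h(a)=\int\cos\,\dd\overline\pi_\rho(a,0)$, and the stability bookkeeping via the free-energy Hessian are all correct and close in spirit to the paper. However, the whole zero count rests on the assertion — which you flag yourself as the ``main obstacle'' — that $h$ is strictly concave on $(0,\infty)$, i.e.\ that the third central moment of $\cos$ under $\overline\pi_\rho(a,0)$ is strictly negative for all $a>0$. This is not merely unproved; it is \emph{false} under the stated hypotheses. Since $U(z)=U(\pi-z)$ makes the law of $\cos$ under $m_U$ symmetric about $0$, one has $h''(0)=0$, and the sign of $h''$ just to the right of $0$ is governed by
\[h'''(0)\ \propto\ \mathbb E_{m_U}\big[\cos^4\big]-3\big(\mathbb E_{m_U}\big[\cos^2\big]\big)^2,\]
which is \emph{positive} whenever the law of $\cos$ under $m_U$ is sufficiently peaked at $0$ — for instance for $U(z)=\alpha\cos(2z)$ with $\alpha>0$ large, where $m_U$ concentrates near $\pm\pi/2$ and the law of $\cos$ is close to a symmetric three-point law with small mass at $\pm1$. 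In that case $h$ is locally \emph{convex} at $0^+$, so the concavity argument (and with it the ``exactly one positive zero'' step, and the claim $\phi'(a_*)<0$ derived from it) does not get off the ground.

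The paper avoids this entirely by working with the rescaled vector field $\xi(a)=\int(\cos z-a)e^{-U(z)+\rho a\cos z}\dd z$ (which has the same zeros as $\phi$ and the same sign of derivative at those zeros). Expanding in powers of $a$ and using that the odd moments $I(2m+1)=\int\cos^{2m+1}e^{-U}$ vanish gives
\[\xi(a)\ =\ \sum_{n\geqslant0}\frac{a^{2n+1}\rho^{2n}}{(2n)!}\,I(2n)\Big(\frac{\rho I(2n+2)}{(2n+1)I(2n)}-1\Big),\qquad I(k)=\int\cos^k(z)e^{-U(z)}\dd z,\]
and the coefficient sign sequence changes sign exactly once as $n$ increases (from positive to nonpositive at some index $n_c$). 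Factoring out $a^{2n_c+1}$, the remaining bracket is a sum of terms each monotone in $a>0$, so $\xi$ has at most one positive zero; differentiating the same expression shows that $\xi'$ changes sign at most once on $(0,\infty)$ and is eventually negative, and $\xi'(0)\propto\rho\int\cos^2\dd m_U-1$ decides which of the two pictures holds. This Descartes-type sign-pattern argument is strictly weaker than concavity and is what makes the result true in the generality claimed. To fix your proof, the concavity step in the second paragraph must be replaced by this expansion; the rest of your argument (including the Hessian computation showing that the unstable direction at $m_U$ is the $\cos$-direction, and the stability of $\pm a_*$ once $\phi'(a_*)<0$ is known) then goes through as you wrote it.
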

\begin{proof}
We keep the notations of the previous section. The symmetry assumption on $U$ implies, in particular, that $\eta = \po \int \cos^2 \dd m_U\pf\po 1-\int \cos^2 \dd m_U\pf$ and that the eigenvalues of $J_F(0,0)$ are $\rho\int \cos^2 \dd m_U- 1$ and $\rho\int \sin^2 \dd m_U - 1$.

The points of $Fix(\pi)$ are all of the form $\overline \pi(a,b)$ where $(a,b)$ solves
 \[\left\{\begin{array}{rcl}
 a &=& \frac{\int \cos(z) \exp\po{-  U(z)+\rho\po a\cos(z)+b\sin(z)\pf}\pf\dd z}{\int  \exp\po{-  U(z)+\rho\po a\cos(z)+b\sin(z)\pf}\pf\dd z}\\
 & & \\
 b &=& \frac{\int \sin(z) \exp\po{-  U(z)+\rho\po a\cos(z)+b\sin(z)\pf}\pf\dd z}{\int  \exp\po{-  U(z)+\rho\po a\cos(z)+b\sin(z)\pf}\pf\dd z}.
 \end{array}\right.\]
 Multiplying both sides of these equations by $\int  e^{- U(z)+\rho\po a\cos(z)+b\sin(z)\pf}\dd z$, this is equivalent to
  \[\left\{\begin{array}{rcl} 0&=&\int (\cos(z)-a) e^{-  U(z)+\rho\po a\cos(z)+b\sin(z)\pf}\dd z \\
   & & \\
   0 &=& \int (\sin(z)-b) e^{-  U(z)+\rho\po a\cos(z)-b\sin(z)\pf}\dd z. \end{array}\right.\tag{*}\]
By assumption, $b=0$ always solves the second part. We are led to study the zeros of the function
\begin{eqnarray*}
\xi (a) & = & \int (\cos(z)-a) e^{- U(z)+\rho  a\cos(z) }\dd z.
\end{eqnarray*}
Expanding $a\mapsto   e^{ \rho  a\cos(z) }$ and using the symmetry of $U$  yields
\begin{eqnarray*}
\xi (a) & = & \sum_{n\geq 0}\frac{a^{2n+1}\rho^{2n}}{(2n)!} I (2n)\po \frac{\rho I (2n+2)}{(2n+1)I (2n)}-1\pf,
\end{eqnarray*}
 where $I (k) = \int \cos^k(z) e^{-  U(z)}\dd z$. Obviously, if $\rho \leqslant 0$, $a\xi(a) < 0$ as soon as $a\neq 0$, while $\xi(0)=0$, which concludes the proof in this case. In the following, we suppose $\rho>0$. Since $n\mapsto I (2n)$ is decreasing, $\frac{\rho I (2n+2)}{(2n+1)I (2n)}-1$ is decreasing and non-positive for $n$ large enough. Let
 \begin{eqnarray*}
 n_c  & = & \min\left\{n,\ \frac{I (2n+2)}{(2n+1)I (2n)} \  \leqslant\ \frac1\rho\right\}
 \end{eqnarray*}
 so that 
 \begin{eqnarray*}
\xi (a) & = & \sum_{n=0}^{ n_c -1}\frac{|\xi ^{(2n+1)}(0)|}{(2n+1)!} a^{2n+1} -\sum_{n\geq n_c } \frac{|\xi ^{(2n+1)}(0)|}{(2n+1)!} a^{2n+1}\\
& = & a^{2n_c+1}\po \sum_{n=0}^{ n_c -1}\frac{|\xi ^{(2n+1)}(0)|}{(2n+1)!} a^{2(n-n_c)} -\sum_{n\geq n_c } \frac{|\xi ^{(2n+1)}(0)|}{(2n+1)!} a^{2(n-n_c)}\pf.
\end{eqnarray*}
Both terms of the sum are non-increasing with $a> 0$, which implies $\xi $ admits at most one positive zero. Since it is an odd function, it admits either one zero (at 0) or three (at $-a_*,0$ and $a_*$ for some unique $a_*>0$). Note that $\xi(a_*) = 0$ implies $a_* = \int \cos\ \dd \overline \pi(a_*,0) < 1$. Differentiating $\xi $ with respect to $a$, we see that 
 \begin{eqnarray*}
\xi '(a) & = & a^{2n_c}\po \sum_{n=0}^{ n_c -1}\frac{|\xi ^{(2n+1)}(0)|}{(2n)!} a^{2(n-n_c)} -\sum_{n\geq n_c } \frac{|\xi ^{(2n+1)}(0)|}{(2n)!} a^{2(n-n_c)}\pf
\end{eqnarray*}
is even, vanishes at most once on $(0,\infty)$ and goes to $-\infty$ at infinity. There are two possibilities:
\begin{itemize}
\item if $\xi '(0) \leq 0$ then $\xi '(a)\leq 0$ for all $a\in \R$ and 0 is the only zero of $\xi$.
\item if $\xi '(0) > 0$ then $\xi $ is positive close to zero and goes to $-\infty$ at infinity, so that it vanishes three times.
\end{itemize}
Finally $\xi '(0)$ has the same sign as
\begin{eqnarray*}
\frac{\rho I (2)}{I (0)}-1 & = & \rho\int \cos^2 \dd m_U- 1.
\end{eqnarray*}   
In the case where $\xi'(0) >0$, we have $\xi'(a_*) = \xi'(-a_*)<0$. Since
\[\partial_a \po \frac{\xi(a)}{\int e^{-U(z)+\rho a\cos(z)}\dd z}\pf = \frac{\xi'(a)}{\int e^{-U(z)+\rho a\cos(z)}\dd z},\]
it means $\pm a_*$ are asymptotically stable equilibria for the flow induced by $\overline F$. The conclusion is now similar to the one of Proposition \ref{PropStabGibbs}, namely we remark $\overline\pi\po a_*,0\pf$ is a global attractor for the restriction of $\Psi$ to $G^{-1}( a_*,0)$, and similarly for $-a_*$.
\end{proof}

Obviously, if $U(z) = U(-z)$, denoting by $A_0' \ = \ \left\{\nu\in \mathcal P\po\mathbb T\pf,\ \int \cos \dd \nu = 0\right\} $, a similar argument (or a change of variables) proves that  $A_0'\cap Fix(\pi)$ is either reduced to $\{ m_U\}$ or constituted of three points, depending on the position of $\rho\int \sin^2 \dd m_U$ with respect to 1. The potential $U(z) = - cos(2z)$ being both symmetric with respect to the horizontal and vertical axes, we already know that there may be one, three or five fixed points on the axes. The rest of the disk remains to be studied.

The proof of the following has been kindly indicated to us by Jean-Baptiste Bardet, Michel Benaïm, Florent Malrieu and Pierre-André Zitt, and will appear in a work of them (for now in progress) about self-interacting processes.

\begin{lem}
Suppose $U(z) = -\cos(2z)$. Then
\begin{eqnarray*}
(A_0 \cup A_0')^c \cap Fix(\pi) & =& \emptyset.
\end{eqnarray*}
\end{lem}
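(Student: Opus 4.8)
The plan is to reduce the fixed-point equation to a scalar condition on a single $2\pi$-periodic trigonometric integral, and then to show by two elementary symmetrisations that this condition can hold only when the corresponding direction lies on one of the two axes.

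Recall from the proof of Proposition~\ref{PropDeuxpuits} that $\pi(\nu)=\overline\pi(\bar\nu)$ with $\bar\nu=\po\int\cos\dd\nu,\int\sin\dd\nu\pf$, so that a point of $Fix(\pi)$ lying in $(A_0\cup A_0')^c$ is of the form $\overline\pi(a,b)$ with $a\neq0$, $b\neq0$, and
\[ a\ =\ \int\cos z\ \overline\pi(a,b)(\dd z),\qquad b\ =\ \int\sin z\ \overline\pi(a,b)(\dd z). \]
One may assume $\rho\neq0$, since if $\rho=0$ then $\overline\pi(a,b)=m_U$ for every $(a,b)$ and $m_U\in A_0\cap A_0'$ by the symmetries of $U$. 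Write $(a,b)=(r\cos\alpha,r\sin\alpha)$ with $r>0$; since $a,b\neq0$, $\sin2\alpha\neq0$. Using $U(z)=-\cos(2z)$, the change of variable $w=z-\alpha$ gives $\overline\pi(a,b)(\dd z)\propto e^{\cos(2w+2\alpha)+\beta\cos w}\dd w$ with $\beta:=\rho r\neq0$; multiplying the first equation by $\cos\alpha$ and the second by $\sin\alpha$ and adding (resp. by $-\sin\alpha$ and $\cos\alpha$ and adding) reduces them to $\int\cos w\ \overline\pi=r$ and $\int\sin w\ \overline\pi=0$. It thus suffices to show that
\[ F(\alpha)\ :=\ \int_0^{2\pi}\sin w\ e^{\cos(2w+2\alpha)+\beta\cos w}\,\dd w\ \neq\ 0\qquad\text{whenever }\beta\neq0,\ \sin2\alpha\neq0. \]

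The computational core is to put $F$ in a manifestly sign-definite form. Splitting $\int_0^{2\pi}=\int_0^{\pi}+\int_{\pi}^{2\pi}$ and applying $w\mapsto w+\pi$ to the second piece — which replaces $\sin w$ by $-\sin w$ and $\beta\cos w$ by $-\beta\cos w$ while leaving $\cos(2w+2\alpha)$ unchanged — gives
\[ F(\alpha)\ =\ 2\int_0^{\pi}\sin w\ \sinh(\beta\cos w)\ e^{\cos(2w+2\alpha)}\,\dd w. \]
Splitting this at $\pi/2$ and applying $w\mapsto\pi-w$ to the second piece — which fixes $\sin w$, replaces $\beta\cos w$ by $-\beta\cos w$, and $\cos(2w+2\alpha)$ by $\cos(2w-2\alpha)$ — yields
\[ F(\alpha)\ =\ 2\int_0^{\pi/2}\sin w\ \sinh(\beta\cos w)\ \bigl(e^{\cos(2w+2\alpha)}-e^{\cos(2w-2\alpha)}\bigr)\,\dd w. \]
On $(0,\pi/2)$ one has $\sin w>0$ and $\cos w>0$, so $\sinh(\beta\cos w)$ keeps the constant sign of $\beta$; and since $\cos(2w+2\alpha)-\cos(2w-2\alpha)=-2\sin(2w)\sin(2\alpha)$ with $\sin2w>0$ there, the bracketed factor keeps the constant sign of $-\sin2\alpha$. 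Hence the integrand is continuous, of constant sign $-\mathrm{sign}(\beta\sin2\alpha)$, and nowhere zero on $(0,\pi/2)$, so $F(\alpha)\neq0$. This contradicts $\int\sin w\ \overline\pi=0$, which proves the lemma.

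I do not expect a genuine obstacle here; the only points needing care are the bookkeeping in the two substitutions (it is the $2\pi$-periodicity of the integrand that both legitimises $w\mapsto w+\pi$ on $[0,2\pi)$ and kills the boundary terms), the exact translation of ``off the two axes'' into $r>0$ and $\sin2\alpha\neq0$, and the use of $U=-\cos(2z)$ being a pure second harmonic, so that the rotation $w=z-\alpha$ leaves the term $-U$ in the clean shape $\cos(2w+2\alpha)$.
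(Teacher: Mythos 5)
Your proof is correct and follows essentially the same route as the paper's: both pass to polar coordinates, reduce the fixed-point system to the single scalar condition $\int\sin(z-\theta)\,\overline\pi=0$, and then apply two fold-symmetrisations to rewrite the integral over $(0,\pi/2)$ with an integrand of constant nonzero sign whenever $\rho r\neq 0$ and $\sin 2\theta\neq 0$. The only cosmetic differences are the order of the two reflections (you pull out $\sinh(\rho r\cos w)$ first, the paper pulls out $\sinh(\sin 2z\,\sin 2\theta)$ first, yielding the same final integral) and your explicit treatment of the trivial case $\rho=0$.
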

\begin{proof}
We keep the previous notation $(a,b) = \overline \nu$ and denote by $(r,\theta)\in \R_+ \times ]-\pi,\pi]$ the polar coordinates of $(a,b)$, such that $a+ib = r e^{i\theta}$. Then $\overline \pi(a,b) = (a,b)$ if and only if
\begin{eqnarray*}
r e^{i\theta} & =& \frac{\int e^{iz} \exp\po{-  U(z)+\rho r  \cos(z-\theta)}\pf\dd z}{\int  \exp\po{-  U(z)+\rho r  \cos(z-\theta)}\pf\dd z}
\end{eqnarray*}
which, multiplying both sides by $e^{-i\theta}$ and taking their imaginary part, implies
\begin{eqnarray*}
0 & =& \int_{-\pi}^\pi \sin(z-\theta) \exp\po{-  U(z)+\rho r  \cos(z-\theta)}\pf\dd z \\
& =& \int_{-\pi}^\pi \sin(z) \exp\po{  \cos(2z)\cos(2\theta) -\sin(2z)\sin(2\theta) +\rho r  \cos(z)}\pf\dd z.
\end{eqnarray*}
The change of variable $z \rightarrow - z$ on $(-\pi,0)$ yields
\begin{eqnarray*}
 0 & =& 2 \int_{0}^\pi \sin(z) \sinh\po \sin(2z)\sin(2\theta)\pf \exp\po{  \cos(2z)\cos(2\theta)  +\rho r  \cos(z)}\pf\dd z
\end{eqnarray*}
and the change of variable $z \rightarrow \pi - z$ on $(\pi/2,\pi)$ yields
\begin{eqnarray*}
 0 & =& 4 \int_{0}^{\pi/2} \sin(z) \sinh\po \sin(2z)\sin(2\theta)\pf \sinh\po \rho r  \cos(z)\pf \exp\po{  \cos(2z)\cos(2\theta)}\pf\dd z.
\end{eqnarray*}
If $\sin(2\theta)\geqslant 0$ (resp. $\leqslant 0$), then the integrand is positive (resp.  negative), which means that in fact it vanishes for all $z\in (0,\pi/2)$, namely
\begin{eqnarray*}
\sinh\po \sin(2z)\sin(2\theta)\pf \sinh\po \rho r  \cos(z)\pf &=& 0\hspace{20pt}\forall z\in(0,\pi/2).
\end{eqnarray*}
Finally, either $r = 0$, or $\theta = 0 \ mod\ \pi/2$, so that in both cases $\nu \in A_0 \cup A_0'$.
\end{proof}

In the following, $U(z) = -\cos(2z)$. Let $\rho_1 = \po \int \cos^2 \dd m_U\pf^{-1}$ and $\rho_2 = \po \int \sin^2 \dd m_U\pf^{-1}$. Note that
\[\rho_1 \ < \ 2 \ < \ \rho_2.\]
So far, we have proved that $m_U$ is an unstable equilibrium of $\Psi$ as soon as $\rho>\rho_1$, and that
\begin{eqnarray*}
\rho \leqslant \rho_1 & \Rightarrow & Fix(\pi) = \{m_U\}\\
\rho_1 < \rho \leqslant \rho_2 & \Rightarrow & Fix(\pi) = \{m_U,\overline \pi(a_*,0),\overline \pi(-a_*,0) \}\\
\rho_2 < \rho & \Rightarrow & Fix(\pi) = \{m_U,\overline \pi(a_*,0),\overline \pi(-a_*,0),\overline \pi(0,b_*),\overline \pi(0,-b_*) \}
\end{eqnarray*}
where $\rho \mapsto (a_*,b_*)=\po a_*(\rho),b_*(\rho)\pf \in (0,\infty)^2$ are  given by Proposition \ref{PropDeuxpuits}.
\begin{lem}
For $\rho>\rho_2$, $\overline \pi(0,b_*)$ and $\overline \pi(0,-b_*)$ are saddles for the flow $\Psi$.
\end{lem}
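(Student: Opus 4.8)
The plan is to reduce the question to the planar picture already used in Propositions~\ref{PropStabGibbs} and~\ref{PropDeuxpuits}. Since $\pi(\nu)=\overline\pi(\bar\nu)$ depends on $\nu$ only through $\bar\nu=\left(\int\cos\,\dd\nu,\int\sin\,\dd\nu\right)$, the flow $\Psi$ projects, via $\nu\mapsto\bar\nu$, onto the flow of
\[\overline F(a,b)=\left(\int\cos\,\dd\overline\pi(a,b)-a,\ \int\sin\,\dd\overline\pi(a,b)-b\right)\]
on the unit disk, and $\overline\pi(0,b_*)$ is a non-degenerate saddle of $\Psi$ exactly when $(0,b_*)$ is one of $\overline F$; concretely, the spectrum of the differential of $F$ at $\overline\pi(0,b_*)$ is $\{-1\}$ together with that of the Jacobian $J_{\overline F}(0,b_*)$. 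As for $\pm a_*$ in Proposition~\ref{PropDeuxpuits}, the two reflection symmetries of $U(z)=-\cos(2z)$ (invariance under $z\mapsto-z$ and under $z\mapsto\pi-z$) make $J_{\overline F}(0,b_*)$ diagonal; using $\int\cos\,\dd\overline\pi(0,b_*)=0$, its diagonal entries are the along-axis eigenvalue $\lambda_\parallel=\rho\,\mathrm{Var}_{\overline\pi(0,b_*)}(\sin)-1$ and the transverse eigenvalue $\lambda_\perp=\rho\int\cos^2\,\dd\overline\pi(0,b_*)-1$.

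For $\lambda_\parallel$ I would invoke the $A_0'$-version of the argument of Proposition~\ref{PropDeuxpuits}: the second component of $\overline F$ along $\{a=0\}$ has the sign of $\zeta(b):=\int(\sin z-b)e^{-U(z)+\rho b\sin z}\,\dd z$, and the same monotonicity computation that gives $\xi'(a_*)<0$ there gives $\zeta'(b_*)<0$, i.e. $\lambda_\parallel<0$ — the point $b_*$ is a strict sink for $\Psi$ restricted to $A_0'$.

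The crux is to prove $\lambda_\perp>0$. Write $\mu_\beta(\dd z)\propto e^{\cos(2z)+\beta\sin z}\,\dd z$ and $\beta:=\rho b_*>0$, so that $\overline\pi(0,b_*)=\mu_\beta$ and $b_*=\int\sin\,\dd\mu_\beta$; then $\lambda_\perp>0$ is equivalent to $\beta\int\cos^2\,\dd\mu_\beta>\int\sin\,\dd\mu_\beta$. This follows from two elementary facts. First, integrating $\frac{\dd}{\dd z}\left(\cos z\, e^{\cos(2z)+\beta\sin z}\right)$ over $\T$ and using $\cos z\,\sin(2z)=2\sin z\cos^2 z$ yields the identity
\[\beta\int\cos^2\,\dd\mu_\beta-\int\sin\,\dd\mu_\beta\ =\ 4\int\cos^2 z\,\sin z\,\dd\mu_\beta .\]
Second, the change of variables $z\mapsto-z$ sends $\mu_\beta$ to $\mu_{-\beta}$ and changes the sign of $\cos^2 z\sin z$, whence
\[\int\cos^2 z\,\sin z\,\dd\mu_\beta\ =\ \frac1{Z_\beta}\int_{\T}\cos^2 z\,\bigl(\sin z\,\sinh(\beta\sin z)\bigr)\, e^{\cos(2z)}\,\dd z\ >\ 0 ,\]
since $t\,\sinh(\beta t)\geqslant0$ for $\beta>0$ and the integrand vanishes only on a finite set (here $Z_\beta$ is the normalising constant). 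Hence $\lambda_\perp>0$, the diagonal matrix $J_{\overline F}(0,b_*)=\mathrm{diag}(\lambda_\perp,\lambda_\parallel)$ has eigenvalues of opposite sign, and $\overline\pi(0,b_*)$ is a saddle of $\Psi$. The case of $\overline\pi(0,-b_*)$ follows from the symmetry $z\mapsto-z$, which conjugates the SITP with the involution $(a,b)\mapsto(a,-b)$ of the disk and exchanges the two fixed points.

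I expect the sign of $\lambda_\perp$ to be the only genuinely non-routine step: a quantitative approach through asymptotics of $b_*$ only yields $\lambda_\perp\sim 4/(\rho-4)$ as $\rho\to\infty$ and is awkward near $\rho=\rho_2$, whereas combining the above integration-by-parts identity with the $\sinh$-positivity makes $\lambda_\perp>0$ transparent for every $\rho>\rho_2$.
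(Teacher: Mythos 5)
Your proof is correct, and for the decisive inequality $\lambda_\perp=\rho\int\cos^2\dd\overline\pi(0,b_*)-1>0$ it takes a genuinely shorter route than the paper's. Interestingly, the paper already derives your integration-by-parts identity (by completing the square $\cos(2z)+\rho b\sin z=1-2\bigl(\sin z-\tfrac14\rho b\bigr)^2+\tfrac18(\rho b)^2$ rather than by differentiating $\cos z\,e^{\cos(2z)+\beta\sin z}$, but it is the same identity $\rho b\int\cos^2\dd\mu_\beta-\int\sin\dd\mu_\beta=4\int\cos^2 z\sin z\,\dd\mu_\beta$). However, the paper uses it only as an intermediate step in computing $\partial_\rho\bigl(\rho A(\rho)\bigr)$, where $A(\rho)=\int\cos^2\dd\overline\pi_\rho(b_*(\rho))$, and then argues by contradiction: it shows that $\rho A(\rho)\le 1$ at some $\rho_3>\rho_2$ would force $\rho\mapsto\rho A(\rho)$ to decrease strictly for all $\rho\ge\rho_3$, contradicting the Laplace-method asymptotic $\rho A(\rho)\to 1$. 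This requires establishing $\partial_\rho b_*>0$, differentiating the fixed-point relation $b_*=g(\rho,b_*)$, and the asymptotic analysis of $b_*(\rho)$. Your route short-circuits all of that: the same identity reduces $\lambda_\perp>0$ to the positivity of $\int\cos^2 z\sin z\,\dd\mu_\beta$, which you settle by symmetrizing under $z\mapsto -z$ and observing $t\sinh(\beta t)\ge 0$ — precisely the $\sinh$-positivity device the paper itself uses in the preceding lemma to eliminate off-axis fixed points, but does not redeploy here. Your argument is cleaner, uniform in $\rho>\rho_2$, and dispenses with the monotonicity of $b_*$ and the Laplace asymptotics entirely. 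Two minor remarks: (i) at $(0,b_*)$ the diagonality of $J_{\overline F}$ and the identity $\int\cos\,\dd\overline\pi(0,b_*)=0$ both follow from the single symmetry $z\mapsto\pi-z$ (which fixes $\cos 2z$ and $\sin z$ and negates $\cos z$); you invoke ``the two reflection symmetries'' but only this one is actually used at that point. (ii) You are right that $\lambda_\parallel=\rho\,\mathrm{Var}_{\overline\pi(0,b_*)}(\sin)-1$; the paper's displayed Jacobian drops the $-\rho b_*^2$ correction in the $(2,2)$ entry, a harmless typo since only the sign of $\lambda_\parallel$ matters and it is secured by the $A_0'$-analogue of Proposition~\ref{PropDeuxpuits}.
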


\begin{proof}
The Jacobian of the vector field $\overline F(a,b)$ at point $(0,b_*)$ is 
\begin{eqnarray*}
J_{\overline F}(0,b_*) &=&  \begin{pmatrix}
\rho\int \cos^2 \dd \overline \pi(0,b_*)-1 & 0\\
0& \rho\int \sin^2 \overline \pi(0,b_*) -1
\end{pmatrix},
\end{eqnarray*}
and the lemma will be proved when we will have established that $\rho\int \cos^2 \dd \overline \pi(0,b_*)>1$. In the rest of the proof, we write $\overline\pi_\rho(b) = \overline\pi_\rho(0,b) $ and
\begin{eqnarray*}
A(\rho) & = & \int \cos^2 \dd \overline\pi_\rho(b_*(\rho)).
\end{eqnarray*}

First, we note that $\partial_\rho b_* >0$, which can be seen  as follows: writing, for $b>0$,
\begin{eqnarray*}
g(\rho,b) & =& \int \sin \dd \overline\pi_\rho(b),
\end{eqnarray*}
we compute
\begin{eqnarray*}
\partial_\rho g(\rho,b) & =& b \po \int \sin^2 \dd \overline\pi_\rho(b) - \po  \int \sin \dd \overline\pi_\rho(b)\pf^2\pf \ >\ 0.
\end{eqnarray*}
On the other hand, $b_*(\rho)$ being a sink of the flow $\partial_t b_t = g(\rho,b_t) - b_t$, if $b < b_*(\rho)$ then $b < g(\rho,b)$. Thus, if $\tilde \rho > \rho$,
\[b \ <\ g(\rho,b)\ <\ g(\tilde \rho,b), \]
which means $b\neq b_*(\tilde \rho)$ for all $b<b_*(\rho)$. In other words, $b_*(\tilde \rho) \geqslant b_*(\rho)$.

Second, differentiating the relation $b_* = g(\rho,b_*)$, we obtain
\begin{eqnarray*}
\partial_\rho b_* & = & \partial_\rho \po \rho b_*\pf\po \int \sin^2 \dd \overline\pi_\rho(b_*) -    b_* ^2\pf\\
\Rightarrow\hspace{20pt} \partial_\rho b_* & = & \frac{b_*\po \int \sin^2 \dd \overline\pi_\rho(b_*) -    b_* ^2\pf}{1-\rho\po  \int \sin^2 \dd \overline\pi_\rho(b_*) -    b_* ^2\pf}.
\end{eqnarray*}
In particular, $\partial_\rho b_*>0$ implies $\rho\po  \int \sin^2 \dd \overline\pi_\rho(b_*) -  \po b_*\pf^2\pf<1$. Then
\begin{eqnarray*}
\partial_\rho\po \rho b_*(\rho)\pf & = & b_* + \frac{\rho b_*\po 1 - A(\rho)-    b_* ^2\pf}{1-\rho\po 1-A(\rho) -   b_*^2\pf}\\
& = & \frac{ b_*}{1-\rho\po 1-A(\rho) -  b_*^2\pf}.
\end{eqnarray*}
We compute
\begin{eqnarray*}
\partial_\rho\po \rho A(\rho)\pf & =& A(\rho) + \rho \partial_\rho(\rho b_*) \po \int \cos^2 \sin \dd\overline \pi_z\po b_*\pf - b_* A(\rho)\pf .
\end{eqnarray*}
Integrating by part,
\begin{eqnarray*}
\int \cos^2(z) \sin(z) e^{ \cos(2z) + \rho b \sin(z) } \dd z& =& \int \cos^2(z) \sin(z) e^{ 1 - 2\po \sin(z) - \frac14\rho b\pf^2 + \frac18\po\rho b\pf^2  }\dd z\\
& =& - \frac14 \int \sin(z) e^{ 1 - 2\po \sin(z) - \frac14\rho b\pf^2 + \frac18\po\rho b\pf^2  }\dd z\\
& & + \frac{\rho b}{4}\int \cos^2(z) e^{ 1 - 2\po \sin(z) - \frac14\rho b\pf^2 + \frac18\po\rho b\pf^2  }\dd z.
\end{eqnarray*}
This yields
\begin{eqnarray*}
\partial_\rho\po \rho A(\rho)\pf & =& A(\rho) + \frac{\rho  b_*^2}{1+ \rho A(\rho) -\rho\po 1 -  b_*^2\pf}\po - \frac14   + \frac14\rho   A(\rho) -  A(\rho)\pf \\
& =&  A(\rho) + \frac{\rho b_*^2}4\po1+\frac{-2+\rho\po 1 -  b_*^2\pf-4 A(\rho)}{1+ \rho A(\rho) -\rho\po 1 -  b_*^2\pf}\pf.
\end{eqnarray*}
In particular, in the case where $\rho A(\rho) \in (0,1]$, $\rho\po 1 -   b_*^2\pf < 1 + \rho A(\rho) < 2$, so that 
\begin{eqnarray*}
\frac{-2+\rho\po 1 -   b_*^2\pf-4 A(\rho)}{1+ \rho A(\rho) -\rho\po 1 -  b_*^2\pf} & \leqslant & \frac{-2+\rho\po 1 -  b_*^2\pf-4 A(\rho)}{2 -\rho\po 1 -  b_*^2\pf}.
\end{eqnarray*}
It means that, if $\rho A(\rho) \leqslant 1$, then
\begin{eqnarray*}
\partial_\rho\po \rho A(\rho)\pf & \leqslant & A(\rho) - \frac{A(\rho) \rho b_*^2}{2 -\rho\po 1 -   b_*^2\pf}\\
& = & A(\rho)\frac{ 2-\rho}{2 -\rho\po 1 -   b_*^2\pf}\ <\ 0
\end{eqnarray*}
as $\rho >\rho_2 >2$. It means that, if there exists $\rho_3>\rho_2$ such that $\rho_3 A(\rho_3) \leqslant 1$, then $\rho \mapsto \rho A(\rho)$ is strictly decreasing for $\rho\geqslant \rho_3$. This is in contradiction with the fact that $\rho A(\rho)$ converges to 1 as $\rho$ goes to infinity. Indeed, by a Laplace method at point $z=\frac\pi2$, 
\begin{eqnarray*}
h(\rho,b) & := & \frac{\int \cos^2(z) e^{\cos(2z)+\rho b \sin(z)}\dd z}{\int  e^{\cos(2z)+\rho b \sin(z)}\dd z}\\
& \underset{\rho\rightarrow\infty}\simeq & \frac{\int_{\R} u^2 e^{2 u^2-\frac12\rho bu^2}\dd u}{\int_{\R}  e^{2 u^2-\frac12\rho bu^2}\dd u}\\
& = & \frac{1}{\rho b - 4}.
\end{eqnarray*}
Note that, $b_*$ increasing with $\rho$, $\overline \pi_\rho \po b_*\pf$ converges as $\rho$ goes to infinity to a Dirac measure at point $\frac\pi2$, so that $b_* \rightarrow 1$. As a consequence,
\[\rho A(\rho) \ = \ \rho h(\rho,b_*)\ \underset{\rho\rightarrow\infty}\simeq \ \frac{\rho}{\rho - 4} \ \underset{\rho\rightarrow\infty}\longrightarrow \ 1. \]
Hence, for all $\rho>\rho_2$, $\rho A(\rho) > 1$, and $(0,b_*)$ is an unstable equilibrium point of the flow induced by $\overline F$, which concludes.
\end{proof}

\begin{proof}[Proof of Theorem \ref{ThmWQuadra}, point 2.]
According to Theorem \ref{ThmFix}, since $Fix(\pi)$ contains a finite number of points, $\mu_t$ necessarily converges to one of those.  For $\rho\leqslant \rho_c := \rho_1$, $Fix(\pi)=\{m_U\}$, hence $\mu_t$ converges to $m_U$. For $\rho > \rho_c$, all the fixed points which are not $(\pm a_*,0)$ are saddles for the flow $\Psi$, and Theorem \ref{ThmPasCVsaddle} implies that the probability that $\mu_t$ converges to one of them is 0. Thus, $\mu_t$ converges to $(\kappa a_*,0)$ for some random $\kappa\in\{-1,1\}$. By  Theorem \ref{ThmPasCVsaddle}, it means either $(a_*,0)$ or $(-a_*,0)$ is a sink for the flow $\Psi$, and by symmetry they are both sinks, so that, by Theorem \ref{ThmCVsink}, both have a positive probability to be chosen.
 \end{proof}

\subsection{Large attraction in a multi-well potential}\label{Multiwell}

In this section, we suppose that $U$ admits a non-degenerate minimum at $x_0\in\mathbb T$. 
We use the notation
\begin{eqnarray*}
\overline \pi_{\rho}(r,\theta)  & = & \overline \pi_{\rho}(a,b) 
\end{eqnarray*}
when $a+ib = re^{i\theta}$ (there will be no ambiguity). The Laplace method shows that, for a function $f\in\mathcal C(\mathbb T)$,
\begin{eqnarray*}
\int_{-\pi}^\pi f(z) e^{\rho r (\cos(z-\theta)-1)} \dd z & =& f(\theta)\sqrt{\frac{2\pi}{\rho r}}  + \underset{\rho\rightarrow\infty}o\po\frac1{\sqrt\rho}\pf.
\end{eqnarray*}
\begin{lem}\label{LemLaplace}
Suppose that $f\in\mathcal C^3(\mathbb T)$ with $f(\theta)=0$, and let $r_0>0$. Then
\begin{eqnarray*}
\int_{-\pi}^\pi f(z) e^{\rho r (\cos(z-\theta)-1)} \dd z & =& f''(\theta)\sqrt{\frac{\pi}{2(\rho r) ^{3}}}  + \underset{\rho\rightarrow\infty}o\po \rho  ^{-\frac32}  \pf,
\end{eqnarray*}
where the error term only depends on $\| f^{(j)}\|_\infty$, $j=0,1,2,3$, and is uniform in $r\in[r_0,1]$.
\end{lem}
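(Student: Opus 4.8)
The plan is to carry out a standard second-order Laplace expansion, paying attention to uniformity in $r$. First I would translate the variable: setting $w=z-\theta$ and using $2\pi$-periodicity, the integral becomes $\int_{-\pi}^\pi f(\theta+w)\,e^{\rho r(\cos w-1)}\,\dd w$, where the phase $w\mapsto \cos w-1$ is even, vanishes only at $w=0$, and satisfies the elementary bound $1-\cos w = 2\sin^2(w/2)\geqslant \tfrac{2}{\pi^2}w^2$ on $[-\pi,\pi]$ (Jordan's inequality). Using the hypothesis $f(\theta)=0$ and Taylor's formula with integral remainder, I write $f(\theta+w)=f'(\theta)w+\tfrac12 f''(\theta)w^2+R(w)$ with $|R(w)|\leqslant \tfrac{1}{6}\|f'''\|_\infty|w|^3$ for $|w|\leqslant\pi$. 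The linear term contributes nothing, since $\int_{-\pi}^\pi f'(\theta)w\,e^{\rho r(\cos w-1)}\,\dd w=0$ by oddness of the integrand, so it remains to control $\tfrac12 f''(\theta)\int_{-\pi}^\pi w^2 e^{\rho r(\cos w-1)}\,\dd w$ and $\int_{-\pi}^\pi R(w)e^{\rho r(\cos w-1)}\,\dd w$.

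For both, I would perform the rescaling $u=w\sqrt{\rho r}$, turning the first integral into $(\rho r)^{-3/2}\int_{-\pi\sqrt{\rho r}}^{\pi\sqrt{\rho r}}u^2 e^{\rho r(\cos(u/\sqrt{\rho r})-1)}\,\dd u$. Since the exponent converges pointwise to $-u^2/2$ as $\rho r\to\infty$ and is dominated throughout by $-\tfrac{2}{\pi^2}u^2$, dominated convergence gives $\int\to\int_{\R}u^2 e^{-u^2/2}\,\dd u=\sqrt{2\pi}$; moreover, as $r$ enters only through $s=\rho r\in[\rho r_0,\rho]$, which tends uniformly to $+\infty$ as $\rho\to\infty$, this convergence is uniform in $r\in[r_0,1]$. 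The same substitution yields $\bigl|\int_{-\pi}^\pi R(w)e^{\rho r(\cos w-1)}\,\dd w\bigr|\leqslant \tfrac{1}{6}\|f'''\|_\infty(\rho r)^{-2}\int_{\R}|u|^3 e^{-2u^2/\pi^2}\,\dd u=O(\rho^{-2})$, uniformly in $r\in[r_0,1]$, with a constant depending only on $\|f'''\|_\infty$ (and on $r_0$). Collecting the pieces, $\tfrac12 f''(\theta)\cdot\sqrt{2\pi}(\rho r)^{-3/2}=f''(\theta)\sqrt{\pi/(2(\rho r)^3)}$, and the two error contributions are $o((\rho r)^{-3/2})$, hence $o(\rho^{-3/2})$, with the dependence on $\|f^{(j)}\|_\infty$ asserted in the statement.

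The only genuine subtlety — the main obstacle — is keeping all error estimates uniform over $r\in[r_0,1]$ and making the dependence on the $\mathcal C^3$-norm of $f$ explicit; the analytic content is otherwise routine. This is dispatched by the observation that every factor of $\rho$ appearing in an error term is paired with $r$ in the combination $s=\rho r\in[\rho r_0,\rho]$, so that any bound which is $o(1)$ or $O(s^{-1/2})$ in the single variable $s$ automatically produces the uniform-in-$r$ statement; the lower bound $r\geqslant r_0>0$ is used exactly once, to turn $O((\rho r)^{-2})$ into $O(\rho^{-2})=o(\rho^{-3/2})$.
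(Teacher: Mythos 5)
Your proof is correct, and it is a genuinely different presentation of the Laplace method from the one in the paper. You rescale directly to the Gaussian length scale $u=w\sqrt{\rho r}$, integrate over the whole stretched interval $(-\pi\sqrt{\rho r},\pi\sqrt{\rho r})$, and appeal to dominated convergence with the dominating function $u^2 e^{-2u^2/\pi^2}$ furnished by Jordan's inequality; the uniformity in $r$ then falls out cleanly because every quantity depends on $(\rho,r)$ only through $s=\rho r\in[\rho r_0,\rho]$. The paper instead introduces an explicit intermediate cutoff $\delta(\rho)=\rho^{-1/3}$ sitting strictly between the Gaussian scale $(\rho r)^{-1/2}$ and $O(1)$: it bounds the contribution on $|z|>\delta$ by $\|f\|_\infty e^{-r\rho\delta/\pi^2}$ and, on $|z|<\delta$, sandwiches $\cos z-1$ between $-z^2/2 \pm \delta^4/6$, which turns the comparison to the Gaussian into an explicit $O(\rho r\delta^4)$ relative error. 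The two approaches thus trade explicitness for economy: the paper's version delivers a concrete decay rate for the error (something like $O(\rho^{-1/3}(\rho r)^{-3/2})$) at the cost of a visible cutoff scale, while your version is shorter, avoids choosing $\delta$, and uses $\|f\|_\infty$ nowhere (only $\|f''\|_\infty$ and $\|f'''\|_\infty$ enter), at the cost of producing only the qualitative $o(\rho^{-3/2})$ that the lemma actually demands. Both yield the asserted uniformity in $r$ and in $f$ ranging over $\mathcal C^3$-bounded families.
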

\begin{proof}
For $\rho\geqslant 1$, let $\delta = \delta(\rho ) =\rho^{-\frac13} $. Without loss of generality, we take $\theta = 0$. First,
\begin{eqnarray*}
\left|\int_{|z|>\delta} f(z) e^{\rho r (\cos(z)-1)} \dd z\right| &  \leqslant & \| f\|_\infty e^{-\frac{r\rho\delta}{\pi^2}},
\end{eqnarray*}
and
\begin{eqnarray*}
\int_{|z|<\delta} \left| f(z) - z f'(\theta) - \frac{z^2}{2} f''(\theta)\right| e^{\rho r (\cos(z)-1)} \dd z &  \leqslant & \frac{\delta}6\| f^{(3)}\|_\infty\int_{|z|<\delta} z^2 e^{\rho r (\cos(z)-1)} \dd z . 
\end{eqnarray*}
By symmetry, $\int_{|z|<\delta}  z  e^{\rho r (\cos(z)-1)} \dd z = 0$. Finally, 
\[\int_{|z|<\delta}  z^2    e^{-\rho r\po \frac{z^2}{2}  +\frac{\delta^4}6\pf  } \dd z \ \leqslant\ \int_{|z|<\delta}  z^2    e^{\rho r (\cos(z)-1)} \dd z  \ \leqslant\ \int_{|z|<\delta}  z^2    e^{-\rho r\po \frac{z^2}{2}  -\frac{\delta^4}6\pf  } \dd z\]
so that, using that $\rho r\delta^3\leqslant 1$,
\begin{eqnarray*}
\left| \int_{|z|<\delta}  z^2    e^{\rho r (\cos(z)-1)} \dd z - \frac{\sqrt{2\pi}}{(\rho r) ^{\frac32}}  \right| & \leqslant & \frac{e}6\rho r \delta^4 \int_{|z|<\delta}  z^2    e^{-\rho r  \frac{z^2}{2}     } \dd z +  \int_{|z|>\delta}  z^2    e^{-\rho r  \frac{z^2}{2}     } \dd z \\
& = &  \underset{\rho\rightarrow\infty}o\po \rho ^{-\frac32} \pf.
\end{eqnarray*}
\end{proof}

Recall that, for $\nu \in \mathcal P\po\mathbb T\pf$ with a Lebesgue density (still denoted $\nu$), the free energy is defined as
\begin{eqnarray*}
J(\nu) &=& \int   U(x)\nu(\dd x) - \frac{\rho}2 \int \cos(x-z)\nu(\dd x)\nu(\dd z) + \int   \ln\po \nu(x)\pf \nu(\dd x).
\end{eqnarray*}

\begin{lem}\label{LemMultiJ}
For all $r_0>0$, uniformly on $\theta\in\mathbb T$ and $r\in[r_0,1]$,
\begin{eqnarray*}
J\po\overline\pi_\rho(r,\theta)\pf - J\po\overline\pi_\rho(1,x_0)\pf & \underset{\rho\rightarrow\infty}\longrightarrow &  U(\theta) - U(x_0)    + \frac12\po \frac1{r} - 1 +   \ln(r)\pf. 
\end{eqnarray*}
\end{lem}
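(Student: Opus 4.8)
The plan is to compute $J\big(\overline{\pi}_\rho(r,\theta)\big)$ exactly in terms of a few moments of that measure, and then to expand each moment by Laplace's method, keeping every remainder uniform in $\theta\in\mathbb T$ and $r\in[r_0,1]$. Write $\nu=\overline{\pi}_\rho(r,\theta)$, so that its density is $\nu(z)=Z_{r,\theta}^{-1}e^{-U(z)+\rho r\cos(z-\theta)}$ with $Z_{r,\theta}=\int e^{-U(z)+\rho r\cos(z-\theta)}\dd z$, whence $\ln\nu(z)=-U(z)+\rho r\cos(z-\theta)-\ln Z_{r,\theta}$. Substituting this into the free energy and using $\cos(x-z)=\cos(x-\theta)\cos(z-\theta)+\sin(x-\theta)\sin(z-\theta)$, the two occurrences of $\int U\dd\nu$ cancel and one is left with
\begin{eqnarray*}
J(\nu) &=& \rho r\,c-\frac{\rho}{2}\big(c^2+s^2\big)-\ln Z_{r,\theta},\qquad c:=\int\cos(z-\theta)\,\nu(\dd z),\quad s:=\int\sin(z-\theta)\,\nu(\dd z).
\end{eqnarray*}

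Next I would estimate $Z_{r,\theta}$, $c$ and $s$ as $\rho\to\infty$. Writing $Z_{r,\theta}=e^{\rho r}\int e^{-U(z)}e^{\rho r(\cos(z-\theta)-1)}\dd z$, the Laplace expansion recalled just before Lemma~\ref{LemLaplace}, applied to $f=e^{-U}$ (whose relevant sup-norms do not depend on $\theta$), gives $Z_{r,\theta}=e^{\rho r}\big(e^{-U(\theta)}\sqrt{2\pi/(\rho r)}+o(\rho^{-1/2})\big)$; since $e^{-U(\theta)}\sqrt{2\pi/(\rho r)}\geqslant e^{-\|U\|_\infty}\sqrt{2\pi/\rho}$ dominates the error uniformly, this yields $\ln Z_{r,\theta}=\rho r-U(\theta)-\frac12\ln(\rho r)+\frac12\ln(2\pi)+o(1)$. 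For $c$, I would apply Lemma~\ref{LemLaplace} to $g(z)=\big(\cos(z-\theta)-1\big)e^{-U(z)}$, which vanishes at $\theta$ and satisfies $g''(\theta)=-e^{-U(\theta)}$; dividing the resulting numerator $-e^{-U(\theta)}\sqrt{\pi/(2(\rho r)^3)}+o(\rho^{-3/2})$ by $Z_{r,\theta}e^{-\rho r}$ (the factor $e^{-U(\theta)}$ cancels) gives $c=1-\frac{1}{2\rho r}+o(\rho^{-1})$. For $s$, the same lemma applied to $z\mapsto\sin(z-\theta)e^{-U(z)}$ (again vanishing at $\theta$, with second derivative bounded uniformly in $\theta$) gives only $s=O(\rho^{-1})$, which suffices since $s$ enters $J(\nu)$ only through $\frac{\rho}{2}s^2=o(1)$. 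The crucial point is that $c$ is multiplied by $\rho r$ and $c^2$ by $\rho/2$, so one genuinely needs $c-1$ to precision $o(\rho^{-1})$ — exactly what Lemma~\ref{LemLaplace} delivers.

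Then I would substitute: $\rho r\,c=\rho r-\frac12+o(1)$, $\frac{\rho}{2}c^2=\frac{\rho}{2}-\frac{1}{2r}+o(1)$ and $\frac{\rho}{2}s^2=o(1)$, all uniformly in $\theta$ and $r\in[r_0,1]$ (here $r\geqslant r_0$ controls the $1/r$ factors), hence
\begin{eqnarray*}
J(\nu) &=& -\frac{\rho}{2}+U(\theta)+\frac12\ln(\rho r)-\frac12+\frac{1}{2r}-\frac12\ln(2\pi)+o(1).
\end{eqnarray*}
Taking the difference with the value at $(r,\theta)=(1,x_0)$, the contributions $-\rho/2$, $\frac12\ln\rho$ and $-\frac12\ln(2\pi)$ cancel, $\frac12\ln(\rho r)-\frac12\ln\rho=\frac12\ln r$, and what remains is
\[J\big(\overline{\pi}_\rho(r,\theta)\big)-J\big(\overline{\pi}_\rho(1,x_0)\big)\ =\ U(\theta)-U(x_0)+\frac12\Big(\frac1r-1+\ln r\Big)+o(1)\]
with $o(1)$ uniform in $\theta\in\mathbb T$ and $r\in[r_0,1]$, which is the claim.

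I expect the main obstacle to be purely the bookkeeping of the uniformity of the Laplace remainders: one must check that the sup-norms up to order three of the test functions built out of $U$ are bounded independently of $\theta$ (which follows from Leibniz's rule and the smoothness of $U$ on the compact $\mathbb T$), that $Z_{r,\theta}e^{-\rho r}$ stays comparable to $\rho^{-1/2}$ uniformly in $r\in[r_0,1]$ so that dividing by it preserves the orders of the error terms, and that the $o(\rho^{-1})$ obtained for $c-1$ really survives multiplication by $\rho$. None of these steps is deep, but they are the only points where the argument could break down.
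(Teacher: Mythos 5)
Your proof is correct and follows essentially the same route as the paper's: rewrite $J\po\overline\pi_\rho(r,\theta)\pf$ exactly as $\rho r\,c - \tfrac{\rho}{2}(c^2+s^2) - \ln Z_{r,\theta}$ (the paper presents the algebraically identical expression with $\cos(z-\theta)-1$ in place of $\cos(z-\theta)$, which merely shuffles the $\rho r$ terms between the middle and last pieces), then expand $c-1$, $s$ and the partition function via Lemma~\ref{LemLaplace} and the standard Laplace asymptotics, and finally take the difference so that the $-\rho/2$, $\tfrac12\ln\rho$ and $\tfrac12\ln(2\pi)$ terms cancel. Your bookkeeping of which quantities need precision $o(\rho^{-1})$ (namely $c-1$, because it is multiplied by $\rho$) and of the uniformity in $\theta$ and $r\in[r_0,1]$ is exactly what the paper's Lemma~\ref{LemLaplace} is set up to provide.
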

\begin{proof}
We compute
\begin{eqnarray*}
J\po\overline \pi_\rho(r,\theta)\pf & =& - \frac\rho2  \po \int  \cos(x-\theta) \overline \pi_{\rho}(r,\theta) \po\dd x\pf\pf^2 - \frac\rho2   \po \int  \sin(x-\theta) \overline \pi_{\rho}(r,\theta) \po\dd x\pf\pf^2 \\
& & + \rho r \int \po \cos(z-\theta)-1\pf \overline \pi_{\rho}(r,\theta)(\dd z) - \ln \int e^{-U(z)+\rho r \po \cos(z-\theta)-1\pf}\dd z. 
\end{eqnarray*}
According to Lemma \ref{LemLaplace} and the Laplace method,
\begin{eqnarray*}
 \rho r \int \po \cos(z-\theta)-1\pf \overline \pi_{\rho}(r,\theta)(\dd z)   & \underset{\rho\rightarrow\infty}\simeq &  - \rho r \frac{ e^{U(\theta)}}{2\rho r e^{U(\theta)}} \ = \ -\frac12,
 \end{eqnarray*}
and similarly,
 \begin{eqnarray*}
 - \frac\rho2  \po \int  \po\cos(x-\theta)-1\pf \overline \pi_{\rho}(r,\theta) \po\dd x\pf\pf^2 - \frac\rho2   \po \int  \sin(x-\theta) \overline \pi_{\rho}(r,\theta) \po\dd x\pf\pf^2   & = &  \rho \times \underset{\rho\rightarrow\infty}{\mathcal O}\po  \rho^{-2}\pf 
\end{eqnarray*}
so that
\begin{eqnarray*}
 - \frac\rho2  \po \int  \cos(x-\theta) \overline \pi_{\rho}(r,\theta) \po\dd x\pf\pf^2 - \frac\rho2   \po \int  \sin(x-\theta) \overline \pi_{\rho}(r,\theta) \po\dd x\pf\pf^2
& =&  - \frac\rho2 + \frac1{2r} + \underset{\rho\rightarrow\infty}{o}\po 1\pf 
\end{eqnarray*}
Finally,
\begin{eqnarray*}
 \ln \po \frac{\int e^{-U(z)+\rho r \po \cos(z-\theta)-1\pf}\dd z}{\int e^{-U(z)+\rho \po \cos(z-x_0)-1\pf}\dd z}\pf  & \underset{\rho\rightarrow\infty}\longrightarrow &  U(x_0) -  U(\theta) -   \frac12\ln(r).
\end{eqnarray*}
\end{proof}

\begin{proof}[Proof of Theorem \ref{ThmWQuadra}, point 3]
Let $\varphi_0>0$ be small enough so that $x_0$ is the only minimum of $U$ in $[x_0-\varphi_0,x_0+\varphi_0]$ (which should be understood as an interval of $\mathbb T$). For $\varphi\in(0,\varphi_0]$, we call
\begin{eqnarray*}
\mathcal D_\varphi &= &\po \frac12,1\right]\times(x_0-\varphi,x_0+\varphi) \\
\partial \mathcal D_\varphi &= &\po\left[\frac12,1\right]\times\{x_0-\varphi,x_0+\varphi\} \pf \cup \po \left\{\frac12\right\}\times  (x_0-\varphi,x_0+\varphi)\pf.
\end{eqnarray*} 
In other words, seen in the complex unitary disk by $r,\theta\mapsto r e^{i\theta}=w$, $\mathcal D_\varphi$ is a sector centred at $x_0$ and of angle $2\varphi$ of the band $r_0<|w|\leqslant 1$, and $\partial \mathcal D_\varphi$ is its boundary (inside the unitary disk).

We fix $\delta>0$. From Lemma \ref{LemMultiJ}, we chose  $\varphi$ small enough and $\rho_0$ large enough so that
\begin{eqnarray*}
\eta & :=  &    \underset{(r,\theta)\in \partial \mathcal D_\varphi}\inf J\po \overline \pi_\rho(r,\theta)\pf - J\po \overline \pi_\rho(1,x_0)\pf\ > \ 0
\end{eqnarray*}
and
\begin{eqnarray*}
\underset{(r,\theta)\in  \mathcal D_\varphi}\sup \int dist^2_{\Su}(z,x_0) \overline \pi_\rho(r,\theta) & < & \delta 
\end{eqnarray*}
for all $\rho>\rho_0$. Since $J$ is non-decreasing along the flow $\Psi$,
\[B \ :=\ \left\{\nu\in\mathcal P\po\mathbb T\pf, \overline \nu\in  D_\varphi, J\po\nu\pf < J\po \overline \pi_\rho(1,x_0)\pf  + \frac\eta2\right \}\]
is a non-empty, $\Psi$-invariant, open set. Let $L(B)$ be the set of limit points of sequences $\po \Psi_{t_k}(\nu)\pf_{k\in\mathbb N}$ with $\nu\in B$. Then $L(B)$ is an attractor in the sense of \cite[Section 5.1 p.22]{Benaim99}, and \cite[Theorem 7.3]{Benaim99} together with our controllability result (Proposition \ref{PropControle}) imply that
\begin{eqnarray*}
\mathbb P\po d_w\po \mu_t,L(B)\pf \underset{t\rightarrow\infty}\longrightarrow 0\pf & > & 0
\end{eqnarray*}
 (see also \cite[Proposition 4.13]{BenaimLedouxRaimond}). In particular,
 \begin{eqnarray*}
\mathbb P\po \exists t_0>0, \ \mu_t \in  \mathcal D_\varphi \forall t>t_0 \pf & > & 0,
\end{eqnarray*}
which concludes.
\end{proof}

\subsection*{Acknowledgements}
The author would like to thank, on the ond hand, Michel Bena\"im and Carl-Eric Gauthier for fruitful discussions about asymptotic pseudotrajectories, and on the other hand Jean-Baptiste Bardet, Florent Malrieu and Pierre-André Zitt for their help in the quadratic interaction case. We acknowledge financial support from the Swiss National Science Foundation Grant 200020-149871/1, and the french ANR PIECE.

\bibliographystyle{plain}
\bibliography{biblio}

\end{document}